\numberwithin{equation}{section}
\newtheorem{theorem}{Theorem}[section]
\newtheorem{lemma}[theorem]{Lemma}
\newtheorem{proposition}[theorem]{Proposition}
\newtheorem{rem}[theorem]{Remark}
\newtheorem{claim}[theorem]{Claim}
\newtheorem{definition}[theorem]{Definition}
\newtheorem{hyp}[theorem]{Assumption}
\newtheorem{conjecture}[theorem]{Conjecture}
\renewenvironment{proof}[1][Proof]{\begin{trivlist}
\item[\hskip \labelsep {\bfseries #1}]}{\qed\end{trivlist}}
\newcommand{\ind}{\mathbf{1}}
\renewcommand{\le}{\leq}
\newcommand{\Z}{\mathbb{Z}}
\newcommand{\N}{\mathbb{N}}
\renewcommand{\tilde}{\widetilde}
\renewcommand{\hat}{\widehat}
\DeclareMathSymbol{\leqslant}{\mathalpha}{AMSa}{"36} 
\DeclareMathSymbol{\geqslant}{\mathalpha}{AMSa}{"3E} 
\DeclareMathSymbol{\eset}{\mathalpha}{AMSb}{"3F}     
\renewcommand{\leq}{\;\leqslant\;}                   
\renewcommand{\geq}{\;\geqslant\;}                   
\newcommand{\dd}{\,\text{\rm d}}             
\newcommand{\cE}{{\ensuremath{\mathcal E}} }
\newcommand{\cN}{{\ensuremath{\mathcal N}} }
\newcommand{\cT}{{\ensuremath{\mathcal T}} }
\newcommand{\bP}{{\ensuremath{\mathbf P}} }
\newcommand{\bE}{{\ensuremath{\mathbf E}} }
\newcommand{\bbE}{{\ensuremath{\mathbb E}} }
\newcommand{\bbN}{{\ensuremath{\mathbb N}} }
\newcommand{\bbP}{{\ensuremath{\mathbb P}} }
\newcommand{\bbR}{{\ensuremath{\mathbb R}} }
\newcommand{\bbZ}{{\ensuremath{\mathbb Z}} }
\newcommand{\ga}{\alpha}
\newcommand{\gb}{\beta}
\newcommand{\gga}{\gamma}            
\newcommand{\gd}{\delta}
\newcommand{\gep}{\varepsilon}       
\newcommand{\go}{\omega}
\newcommand{\gl}{\lambda}
\newcommand{\gU}{\Upsilon}
\def\captionfont@{\footnotesize}
\def\captionheadfont@{\scshape}
\long\def\@makecaption#1#2{%
  \vspace{2mm}
  \setbox\@tempboxa\vbox{\color@setgroup
    \advance\hsize-6pc\noindent
    \captionfont@\captionheadfont@#1\@xp\@ifnotempty\@xp
        {\@cdr#2\@nil}{.\captionfont@\upshape\enspace#2}%
    \unskip\kern-6pc\par
    \global\setbox\@ne\lastbox\color@endgroup}%
  \ifhbox\@ne 
    \setbox\@ne\hbox{\unhbox\@ne\unskip\unskip\unpenalty\unkern}%
  \fi
  \ifdim\wd\@tempboxa=\z@ 
    \setbox\@ne\hbox to\columnwidth{\hss\kern-6pc\box\@ne\hss}%
  \else 
    \setbox\@ne\vbox{\unvbox\@tempboxa\parskip\z@skip
        \noindent\unhbox\@ne\advance\hsize-6pc\par}%
\fi
  \ifnum\@tempcnta<64 
    \addvspace\abovecaptionskip
    \moveright 3pc\box\@ne
  \else 
    \moveright 3pc\box\@ne
    \nobreak
    \vskip\belowcaptionskip
  \fi
\relax
}
\def\writefig#1 #2 #3 {\rlap{\kern #1 truecm
\raise #2 truecm \hbox{#3}}}
\newcommand{\pin}{\mathrm{pin}}
\newcommand{\q}{\mathrm{que}}
\newcommand{\tf}{\mathtt{F}}
\renewcommand{\P}{{\ensuremath{\mathbf P}} }
\newcommand{\E}{{\ensuremath{\mathbf E}} }
\newcommand{\Eo}{\mathbb{E}}
\newcommand{\Po}{\mathbb{P}}
\renewcommand{\a}{ \mathrm{a}}
\newcommand{\K}{\mathrm{K}}
\newcommand{\cS}{\mathcal{S}}
\newcommand{\Fav}{ \mathfrak{F}}
\newcommand{\W}{\mathsf{W}}
\author[Q. Berger]{Quentin  Berger
\\
\textit{\tiny University of Southern California}}
\title[Appearance of an infinite disorder regime for pinning model]{Pinning model in random correlated environment:
appearance of an \textsl{infinite disorder} regime}
\begin{document}

\begin{abstract}

  We study  the influence of a correlated disorder
  on the localization phase transition in the pinning model \cite{GBbook}. When correlations are strong enough, an \textsl{infinite disorder} regime arises: large and frequent attractive regions appear in the environment.  We present here a pinning model in random binary ($\{-1,1\}$-valued) environment.  Defining \textsl{infinite disorder} via the requirement that the probability of the occurrence of a large attractive region is \emph{sub-exponential} in its size, we prove that it coincides with the fact that the critical point is equal to its minimal possible value, namely $h_c(\gb)=-\gb$.  We also stress that in the  infinite disorder regime, the phase transition is smoother than in the  homogeneous case, \emph{whatever the critical exponent of the homogeneous model is}: disorder is therefore \textsl{always relevant}.  We
 illustrate these results with the example of an environment based on the sign of a Gaussian correlated sequence, in which we show that the phase transition is of infinite order in presence of infinite disorder. Our results contrast with results known in the literature, in particular in the case of an IID disorder, where the question of the influence of disorder on the critical properties is answered via the so-called Harris criterion, and where a conventional relevance/irrelevance picture holds.
 \\
 \\
 2010 \textit{Mathematics Subject Classification: 82D60, 60K37, 60K05}
 \\
 Keywords:\textit{ Polymer pinning, Disordered systems, Critical
   phenomena, Correlation, Wein\-rib-Halperin prediction, Infinite
   disorder.}
\end{abstract}

\maketitle

\section{Introduction}

\subsection{Physical motivations}
\label{sec:introcorrel}

In the study
of critical phenomena,
a fundamental question is that of the influence of
a quenched randomness on the physical properties of a system
when approaching criticality.
More precisely, if a disordered system is shown to undergo a phase transition,
one compares its behavior close to the critical point to that of the
non-disordered (or homogeneous) model.
If the features of the phase transition (essentially the critical exponents) are
changed by the presence of randomness,
disorder is said to be \textsl{relevant}.
The Harris criterion (see \cite{Harris}) gives a prediction
for disorder irrelevance for
$d$-dimensional disordered systems, when randomness has short-range correlations:
disorder is irrelevant if $d\nu^{\rm pur}>2$, where $\nu^{\rm pur}$ 
is the correlation length critical exponent of the homogeneous model. If this condition is not fulfilled, then the critical behavior must change: in particular it has been shown in \cite{CCFS86} that the  critical exponent of the correlation length (suitably defined, in terms of finite-size scaling) is larger than $2/d$ for the disordered system. When disorder presents  long-range correlations, one invokes the Weinrib-Halperin prediction \cite{WeinHalp83}.
With correlations between two couplings at $i$ and $j$ decaying like $|i-j|^{-a}$, $a>0$, one should have that the condition for disorder irrelevance becomes $\min(d,a) \nu^{\rm pur}>2$: the Harris prediction is changed only if $a<d$.

When dealing with this question of the influence of disorder on critical properties, the effect of rare regions with atypical disorder reveals to be crucial, cf. \cite{Vojta06}. To simplify the statements,
if the rare attractive regions are too spread out, their effect on the system remains bounded when considering larger and larger length scale: in the renormalization group language, the fixed point is at a finite randomness. One then lies in what we call the \textsl{conventional} regime, where Harris' and Weinrib-Halperin's ideas should be applicable to decide disorder relevance/irrelevance. On the other hand, if atypical regions occur very frequently, their impact on the system increases without limit when taking larger and larger length scale: the system is said to be governed by an infinite-randomness fixed point, and one lies in the \textsl{infinite disorder} regime. Infinite disorder features have been studied for various systems, essentially in random quantum Ising model (starting with Fisher \cite{Fisher92,Fisher94}),
and we refer to \cite{IgloMont05} for a review.

We stress that, in \cite[\S 3.3]{Vojta06}, a classification of the effects of rare regions is proposed, depending on how the contribution of a single atypical region to observables increases with its size. In class A, the effect of rare regions is overcome by their exponentially small density, and their effect on the phase transition is marginal. In class B, the contribution of atypical regions increases exponentially with their size, which plays an important role in the global phase transition.
Finally, in class C, the phase transition is actually destructed by smearing: the contribution of regions with atypical disorder overwhelms their dispersal, and the behavior of the global system is corrupted.

\smallskip

In the mathematical literature,
one class of models has been given much attention lately regarding
the question of disorder relevance/ir\-rel\-ev\-ance:
the disordered pinning model (see \cite{GBbook, SFLN, denHoll}),
that models the adsorption of a polymer on a wall or a defect line.
Its advantage is that one can play on $\nu^{\rm pur}$ as a parameter, to cover the whole range of the relevance/irrelevance picture (in the conventional regime). 
In the IID case, the Harris criterion has been proven, thanks to a series of papers \cite{A06, AZ08, CdH10, DGLT07, GLT09, GT05, L10, T06}. More recently, the case of long-range correlated disorder has also been attacked, mostly in the case of Gaussian correlated disorder \cite{Bcorrel,BThier,Poisat12}, with correlation decay exponent $a>0$;
and some steps were made towards the Weinrib-Halperin criterion in the case $a>1$ (\textit{i.e.}\
when the Harris criterion should be unchanged).
Infinite disorder arises (although it was not mentioned in these terms) in the case $a<1$: the phase transition has been shown to disappear  (the critical point equals $-\infty$), precisely because of large disorder fluctuations, as in Class C of \cite{Vojta06}.


In this article, we focus on the appearance of a \emph{infinite disorder} regime for pinning models. With a choice of non-Gaussian (actually bounded) environment and strong enough correlations, we show that the phase transition survives but the relevance/irrelevance picture is drastically modified, and we present some of the unusual characteristics of the critical behavior of the system, similar to the smearing of the phase transition described in Class C of \cite{Vojta06}.

\subsection{Definition of the pinning model}
\label{sec:pinning}

Let $\tau:=(\tau_i)_{i\geq 0}$ be a renewal process, with law $\P$: $\tau$
is a sequence with
$\tau_0=0$ $\bP$-a.s., and such that the variables $(\tau_i-\tau_{i-1})_{i\in\bbN}$
are IID with support in $\bbN$,
with common law that is called inter-arrival distribution.

\begin{hyp}
\label{assumpK}
We make the assumption that the renewal process is recurrent, that is $\bP(\tau_1=+\infty)=0$.
We assume that the
inter-arrival distribution, denoted by $\K(\cdot)$,
satisfies
\begin{equation}
\label{K}
\K(n):=\bP(\tau_1=n)\stackrel{n\to\infty}{=} (1+o(1))\frac{c_K}{n^{1+\ga}},
\end{equation} 
for some $\alpha>0$ and $c_K>0$.
We also note for convenience $\bar\K(n) := \bP(\tau_1> n)$.
\end{hyp}

There is a physical interpretation for
the set $\tau:=\{\tau_0,\tau_1,\ldots\}$ (we make here a slight abuse of notations):
it can be thought
as the set of contact points between a polymer and a defect line.
Indeed, if $S$ is a random walk on $\bbZ^{\texttt{d}}$,
the graph of the directed random walk $(k,S_k)_{k\in[0,N]}$
represents a $1$-dimensional polymer chain living in a $(1+\texttt{d})$-dimensional space,
and interacting with the defect line $\bbN\times\{0\}$.  Then, the 
set of return times to $0$ of the random walk $S$ is a renewal process.
In the case of a $\texttt{d}$-dimensional simple random walk, the
inter-arrival distribution is known to satisfy the asymptotic \eqref{K} (up to the aperiodicity condition, which is easily overcome):
one has $\ga=1/2$ for $\texttt{d}=1$, $\ga=0$ (with logarithmic correction)
if $\texttt{d}=2$, and $\ga=\texttt{d}/2-1$ if $\texttt{d}\geq3$, see \cite{DonKur}.
Note that one can actually consider transient renewals, see Remark \ref{rem:reccurence}: the recurrence of the renewal does not lead to any loss of generality.

\subsubsection{The disordered model}

Let us consider a random sequence $\go=\{\go_i\}_{i\in\N}$ 
whose law, denoted $\bbP$, is ergodic and such that $\bbE[|\go_1|]<+\infty$.
Given the sequence $\go$ (the environment) and parameters $h\in \bbR,\gb\geq 0$, we define
the \textsl{quenched}
polymer measure with {\sl free boundary condition}.
It is a Gibbs transformation of the law~$\P$, up to length $N$:
\begin{equation}
\label{freepolymmeas}
 \frac{\dd \bP_{N,h}^{\go,\gb}}{\dd \bP} (\tau) := 
\frac{1}{Z_{N,h}^{\go,\gb}}   \exp\left( \sum_{n=1}^N  (h+\gb\go_n) \gd_n\right),
\end{equation}
with the notation $\gd_n:=\ind_{\{n\in\tau\}}$. The quantity
$ Z_{N,h}^{\go,\gb}:= \bE\left[ \exp\left( \sum_{n=1}^N  (h+\gb\go_n) \gd_n\right) \right]$
is used to normalize $\bP_{N,h}^{\go,\gb}$
to a probability measure, and is called
the \emph{partition function} of the disordered system.

\begin{rem}\rm
\label{rem:reccurence}
Note that if the underlying renewal were transient, one would consider the recurrent renewal, with inter-arrival distribution $\tilde K (n)=\tilde\bP(\tilde\tau_1=n) := K(n) / \bP(\tau_1<+\infty)$. Then, one has that
$ Z_{N,h}^{\go,\gb}:= \tilde\bE\left[ \exp\left( \sum_{n=1}^N  (h+\log(\bP(\tau_1<+\infty))+\gb\go_n) \gd_n\right) \right]$: one gets back to studying a recurrent renewal only thanks to a change of parameters $h\mapsto h+\log(\bP(\tau_1<+\infty))$.
\end{rem}

The polymer measure defined in \eqref{freepolymmeas} then
corresponds to giving an energy reward (or penalty, depending on its sign) to the trajectory of the renewal when it touches the defect line, at the times $\{\tau_i\}_{i\in\bbN}$. The interaction is composed of a homogeneous reward, $h$, and an inhomogeneous one, $\gb\go_n$.

\begin{proposition}
\label{prop:existF}
One defines the \emph{quenched} free energy of the system
\begin{equation}
\label{defF}
\tf(\gb,h):=\lim_{N\to\infty} \frac{1}{N}\log  Z_{N,h}^{\go,\gb} = \lim_{N\to\infty} \frac{1}{N}\bbE \log  Z_{N,h}^{\go,\gb},
\end{equation}
which exists
and is $\bbP$-a.s.\ constant.
The map $h\mapsto \tf(\gb,h)$ is convex, non-negative and non-decreasing.
Thus there exists a ({\sl quenched}) critical point $h_c(\gb)$,
possibly infinite, for which one has that $\tf(\gb,h)>0$
if and only if $h>h_c(\gb)$.
\end{proposition}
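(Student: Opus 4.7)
The plan is to follow the standard scheme for disordered pinning models: apply Kingman's subadditive ergodic theorem to a constrained partition function, transfer the result to the free partition function, and then read off the elementary qualitative properties from finite-volume considerations.

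First I would introduce the constrained partition function
$$Z_{N,h}^{\go,\gb,c} := \bE\Bigl[\exp\Bigl(\sum_{n=1}^N (h+\gb \go_n)\gd_n\Bigr)\ind_{\{N\in\tau\}}\Bigr].$$
Conditioning on $\{N\in\tau\}$ and invoking the renewal property of $\tau$ immediately yields the super-multiplicativity
$$Z_{N+M,h}^{\go,\gb,c} \;\geq\; Z_{N,h}^{\go,\gb,c}\,Z_{M,h}^{\theta^N\go,\gb,c},$$
where $\theta$ denotes the left shift on environment sequences. Thus $X_{i,j}(\go):=-\log Z_{j-i,h}^{\theta^i\go,\gb,c}$ is subadditive along a $\bbP$-preserving ergodic transformation. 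The sandwich
$$\K(N)\,e^{h+\gb\go_N} \;\leq\; Z_{N,h}^{\go,\gb,c} \;\leq\; \exp\Bigl(\sum_{n=1}^N (h+\gb\go_n)^+\Bigr),$$
combined with $\bbE|\go_1|<+\infty$, supplies the integrability needed for Kingman's theorem, which delivers a deterministic limit $\tf(\gb,h) := \lim_N \frac{1}{N}\log Z_{N,h}^{\go,\gb,c}$, holding both $\bbP$-a.s.\ and in $L^1$; in particular $\frac{1}{N}\bbE[\log Z^{\go,\gb,c}_{N,h}]$ converges to the same $\tf(\gb,h)$.

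To pass to the unconstrained $Z_{N,h}^{\go,\gb}$, I would decompose on the last contact point to obtain
$$Z_{N,h}^{\go,\gb,c} \;\leq\; Z_{N,h}^{\go,\gb} \;=\; \sum_{k=0}^N Z_{k,h}^{\go,\gb,c}\,\bar \K(N-k) \;\leq\; (N+1)\max_{0\leq k\leq N} Z_{k,h}^{\go,\gb,c},$$
which forces both partition functions to share the exponential rate $\tf(\gb,h)$ once one knows $\tf(\gb,h)\geq 0$. Non-negativity follows from the empty-trajectory lower bound $Z_{N,h}^{\go,\gb}\geq \bar\K(N)$ (restricting to $\tau_1>N$), since $\bar\K$ decays only polynomially by Assumption \ref{assumpK}. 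Convexity and monotonicity of $\tf(\gb,\cdot)$ pass to the limit from the finite-$N$ analogues: for each $(N,\go)$, $h\mapsto \log Z_{N,h}^{\go,\gb}$ is convex (logarithm of a finite sum of exponentials affine in $h$) and non-decreasing (since $\gd_n\geq 0$). The critical point is then defined as $h_c(\gb):=\sup\{h\in\bbR:\tf(\gb,h)=0\}\in[-\infty,+\infty]$, and the announced equivalence $\tf(\gb,h)>0\iff h>h_c(\gb)$ is immediate from the monotonicity and non-negativity of $\tf$.

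I expect no truly hard step in this argument; it is a textbook-style application of subadditivity plus Kingman. The only mildly technical ingredient is the integrability verification for Kingman's theorem, which is precisely where the standing hypothesis $\bbE|\go_1|<+\infty$ enters, and the identification of $\bbP$-a.s.\ and annealed limits in \eqref{defF} is then an immediate consequence of the $L^1$ half of Kingman's conclusion.
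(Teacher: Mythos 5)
The paper offers no proof of this proposition (it refers to \cite{GBbook}, Ch.~4), and your argument is exactly the classical one being invoked there — super-multiplicativity of the pinned partition function, Kingman's subadditive ergodic theorem with integrability supplied by $\bbE[|\go_1|]<+\infty$ and ergodicity of the shift, the last-contact decomposition to transfer the rate to the free partition function, and finite-volume convexity and monotonicity — so it is correct and essentially the intended proof. One cosmetic reordering: non-negativity of $\tf(\gb,h)$ should be read off from the pinned lower bound $Z_{N,h}^{\go,\gb,c}\geq \K(N)e^{h+\gb\go_N}$ (already present in your integrability sandwich, and giving $\tf\geq 0$ directly since $\go_N/N\to 0$ a.s.), because as written you deduce it from the free partition function whose exponential rate is identified with $\tf$ only after $\tf\geq 0$ is known; also, the ``if and only if'' at $h=h_c(\gb)$ uses continuity of $\tf$ in $h$, which follows from the convexity you established together with the finiteness bound $\tf(\gb,h)\leq |h|+\gb\,\bbE[|\go_1|]$.
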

This is a classical result for pinning models (see \cite[Ch. 4]{GBbook}), and
we do not prove it here.
The free energy, or energy per monomer,
carries physical information on the system: it is easy to see that $\frac{\partial \tf}{\partial h}$ (when it exists)
is the the asymptotic density of contacts under $\bP_{N,h}^{\go,\gb}$.
If $\tf(\gb,h)>0$ then
there is a positive density of contacts:  trajectories stick to the defect line.
If $\tf(\gb,h)=0$ there is
a null density of contacts: trajectories
are wandering away from the defect line.
Therefore, a phase transition occurs at the \emph{quenched}
critical point $h_c(\gb)$,
from a \textsl{delocalized phase} for $h<h_c(\gb)$
to a \textsl{localized phase} for $h>h_c(\gb)$.

\smallskip

We also define the {\sl annealed} system, that is often compared  with the disordered system:
the {\sl annealed} partition function is $ Z_{N,h,\gb}^{\a}:=\bbE[Z_{N,h}^{\go,\gb}]$,
and the {\sl annealed} free energy is
  $\tf^{\a}(\gb,h):=\lim_{N\to\infty} \frac{1}{N}\log  \bbE Z_{N,h}^{\go,\gb}$.
One also has an {\sl annealed} critical point $h_c^{\a}(\gb)$ that separates
phases where $\tf^{\a}(\gb,h)=0$ and where $\tf^{\a}(\gb,h)>0$.
It is straightforward to obtain from the Jensen inequality that $\tf^{\a}(\gb,h)\geq \tf(\gb,h)$,
which gives $h_c^{\a}(\gb)\leq h_c(\gb)$.
When the inequality is strict, we have an indication that disorder is relevant.

The question of disorder relevance/irrelevance is therefore asked both
in terms of critical exponents, comparing the disordered and homogeneous critical behavior,
and in terms of
critical points, comparing the quenched and annealed ones.

\subsubsection{Reminder on the homogeneous pinning model}

Let us consider the homogeneous (or pure) model, and its partition function, that we denote
$Z_{N,h}^{\rm pur}:= \bE\left[  \exp\left(h\sum_{n=1}^N \gd_n\right) \right]$.
The particularity of the homogeneous pinning model is that it is exactly solvable, see \cite{Fisher}.

\begin{proposition}[Critical behavior of the homogeneous model]
\label{prophomo}
Under Assumption \ref{assumpK}, one has that
$h_c:=h_c(0)=0$.
The behavior of the free energy (denoted $\tf(h)$ for simplicity of the statements) for $h$ close to $h_c=0$
is
\begin{equation}
\tf(0,h)=:\tf(h) \stackrel{h\searrow0}{\sim}
  \begin{cases}
     \left( \frac{\ga}{\Gamma(1-\ga) c_{\K}}\right)^{1/\ga}  \, h^{1/\ga}  &\quad \text{if } \ga<1, \\
      \frac{1}{c_{\K}}\, |\log h|^{-1} h  & \quad \text{if } \ga=1, \\
      \left(  \sum_{n\in\N} nK(n)\right)^{-1} \, h & \quad \text{if } \ga>1.
  \end{cases}
\end{equation}
\end{proposition}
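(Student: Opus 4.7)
The plan is to exploit the exact solvability of the homogeneous pinning model via renewal theory, which reduces the proposition to a Tauberian-type expansion of $\hat\K(\lambda) := \sum_{n \geq 1} e^{-\lambda n} \K(n)$ near $\lambda = 0$.

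First I would derive the renewal representation of the partition function by decomposing a trajectory of $\tau$ up to time $N$ according to its successive contact points:
$$Z_{N,h}^{\mathrm{pur}} = \sum_{k \geq 0} e^{hk} \sum_{0 < n_1 < \cdots < n_k \leq N} \K(n_1) \prod_{j=2}^{k} \K(n_j - n_{j-1})\, \bar\K(N - n_k).$$
A Laplace transform in $N$ factorizes the double sum into a geometric series and gives, for $\lambda > \tf(h)$,
$$\sum_{N \geq 0} e^{-\lambda N} Z_{N,h}^{\mathrm{pur}} = \frac{\sum_{n \geq 0} e^{-\lambda n} \bar\K(n)}{1 - e^{h}\, \hat\K(\lambda)}.$$
Since $\hat\K$ is continuous and strictly decreases from $\hat\K(0) = 1$ (by recurrence) down to $0$, identifying $\tf(h)$ with the abscissa of the smallest singularity of this generating function yields, for every $h > 0$, the implicit equation
$$\phi(\tf(h)) = 1 - e^{-h}, \qquad \phi(\lambda) := 1 - \hat\K(\lambda) = \lambda \int_0^{\infty} e^{-\lambda t}\, \bar\K(\lfloor t \rfloor)\, \dd t,$$
while $\tf(h) = 0$ for $h \leq 0$; in particular $h_c = 0$.

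Next I would expand $\phi(\lambda)$ as $\lambda \searrow 0$ using the tail estimate $\bar\K(n) \sim (c_\K/\ga)\, n^{-\ga}$, which follows from Assumption~\ref{assumpK}. For $\ga \in (0,1)$ the change of variables $u = \lambda t$ together with $\int_0^\infty e^{-u} u^{-\ga}\, \dd u = \Gamma(1-\ga)$ gives $\phi(\lambda) \sim (c_\K \Gamma(1-\ga)/\ga)\, \lambda^{\ga}$. For $\ga = 1$ the tail $\bar\K(n) \sim c_\K/n$ is only logarithmically divergent, and splitting the integral at scale $1/\lambda$ yields $\phi(\lambda) \sim c_\K\, \lambda\, |\log \lambda|$. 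For $\ga > 1$ the tail is summable with $\sum_{n \geq 0} \bar\K(n) = \bE[\tau_1] = \sum_n n \K(n) < \infty$, and dominated convergence produces $\phi(\lambda) \sim \bigl(\sum_n n \K(n)\bigr)\, \lambda$.

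It then remains to invert the relation $\phi(\tf(h)) = 1 - e^{-h} = h + O(h^2)$ in each regime to read off the announced asymptotics. The only step that requires care is the boundary case $\ga = 1$, where the logarithmic factor has to be tracked both in the direct expansion of $\phi$ and in its inversion; the two other regimes follow immediately from the monotonicity of $\phi$ and from standard Abelian arguments for regularly varying tails.
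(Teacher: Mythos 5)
Your proposal is correct and is precisely the classical ``exact solvability'' computation that the paper invokes by citing Fisher and \cite[Ch.~2]{GBbook} rather than reproving: the free energy is characterized as the root of $\hat\K(\tf(h))=e^{-h}$, and the three asymptotic regimes follow from the Abelian expansion of $1-\hat\K(\lambda)$ driven by the tail $\bar\K(n)\sim (c_\K/\ga)n^{-\ga}$. The only point worth making explicit when writing it up is the standard justification that $\tf(h)$ coincides with the abscissa of convergence of $\sum_N e^{-\lambda N}Z_{N,h}^{\rm pur}$ (e.g.\ via the a priori bound $Z_{N,h}^{\rm pur}\geq e^h\K(N)$), after which your argument is complete.
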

This proposition tells that the critical exponent of the pure free energy is
$\nu^{\rm pur}=1\vee 1/\ga$ (we note $a\vee b = \max(a,b)$ and $a\wedge b=\min(a,b)$),
leaving aside the $\log$ factor in the case $\ga=1$. 
Let us mention that
$\nu^{\rm pur}$ is also the critical exponent of the correlation length, cf.\ \cite{G_correl}.
In the sequel, we actually do not treat
the case $\ga=1$ only to avoid too many technicalities (this case is not fundamentally different).

\subsection{Review of the known results}

\subsubsection{Case of IID environment}

The picture of disorder relevance/irrelevance is now mathematically understood,
the marginal case $\nu^{\rm pur}=2$ ($\ga=1/2$) being also settled.
We collect the results (cf.\ \cite{SFLN} for an overview), predicted by the Harris criterion:

\textbullet\ 
If $\alpha<1/2$, disorder is \emph{irrelevant}: for $\gb$ small enough, one has
 $  h_c(\gb) = h_c^{\a}(\gb)$, and the order of the (disordered) phase transition is the same
 as for the pure system,
 see \cite{A06,L10,T08}.

\textbullet\  If $\alpha\geq 1/2$, disorder is \emph{relevant}: one has
that  $h_c(\gb)>h_c^{\a}(\gb)$  for all $\gb>0$, see\cite{AZ08,DGLT07,GLT09} (bounds on the gap between the critical points are also given). Moreover, the order of the (disordered) phase transition
is shown to be at least $2$ \cite{GT05}, showing disorder relevance when $\nu^{\rm pur}<2$ ($\ga>1/2$).

We also mention that a new approach to this problem has been developed
recently. It relies on a Large Deviation Principle for a process of cutting words into a letter sequence \cite{BGdH08},
and has been fruitful in many contexts, and in particular for pinning models \cite{CdH10}.

\subsubsection{Case of a correlated environment}

The first natural type of correlated environment to be considered is the Gaussian one:
$\go=\{\go_n\}_{n\in\bbN}$ is a centered Gaussian stationary sequence, with
covariance function $\bbE[\go_i \go_{i+n}]=:\rho_n$.
Thanks to the Gaussian structure of the correlations,
one is able to compute the annealed partition function,
\begin{equation}
\bbE\left[ Z_{N,h}^{\go,\gb}\right]
=\bE\left[ \exp\left( h\sum_{i=1}^{N}\gd_i 
      + \frac{\gb^2}{2} \sum_{1\leq i, j \leq N} \rho_{|j-i|} \gd_i \gd_j\right) \right].
\end{equation}
Note that in the case of an IID environment,
the annealed system
is just the homogeneous pinning model with parameter $h+\frac{\gb^2}{2} \bbE[\go_1^2]$.
With correlations, the annealed model
is much more difficult
to solve!

The case of  finite-range correlations is treated in
 \cite{Poisat1, Poisat2}, and reserves no surprises: the Harris criterion is still valid.
Let us now present the result for long-range, power-law decaying correlations: $\rho_n\stackrel{n\to\infty}{\sim} c n^{-a}$,
for some $a>0$ and some constant $c>0$.
In \cite{Bcorrel} and \cite{Poisat12},
the authors show that if 
$a>2$, then the annealed critical exponent is equal to $\nu^{\rm pur}$.
In \cite{Bcorrel}, it is also proven that when $a>1$, the phase transition is of order
at least $2$, proving disorder relevance when $\ga>1/2$ ($\nu^{\rm pur}<2$),
as predicted.
In \cite{BThier}, the authors treat the hierarchical version of this model,
and prove the Weinrib-Halperin prediction in the case $a>1$,
both in terms of critical points,
and in terms of
critical exponents.
The case $a<1$ appears to be more problematic. 
Both in the standard and the hierarchical
version of the model (\cite{Bcorrel} and \cite{BThier}),
the annealed free energy is infinite, and
$\tf(\gb,h)>0$ for all $\gb>0$ and $h\in\bbR$.
There is no phase transition anymore
($h_c(\gb)=-\infty$), and 
it is therefore not possible to study the influence of disorder on the phase transition.
This phenomenon is due to the fact that when $a<1$,
there are large and frequent regions in the environment that are arbitrarily attractive.
($\go_n$ can be arbitrarily large).
We then speak of "infinite disorder" (a more precise statement is made in Definition \ref{defstrong}).

An idea to by-pass this problem is to consider a bounded environment, so that the phase transition
occurs, at some finite critical point. The inconvenient is that one has to
abandon the Gaussian character of the environment.
In \cite{BLpin},
the authors construct an ad-hoc binary environment:
it has blocks of $0$ and of $-1$'s, where the sizes of the different blocks are independent,
with power-law tail distribution, of exponent $\vartheta>1$ (the case $\vartheta<1$ being trivial).
It is proven that for all $\gb>0$, the critical point is equal to its minimal possible value
$h_c(\gb)=0$. The sharp critical behavior of the free energy is also given,
the critical exponent being $\nu^{\q}=\vartheta \nu^{\rm pur}>\nu^{\rm pur} $ (with explicit logarithmic corrections): disorder is relevant irrespective of the value of $\nu^{\rm pur}$. Moreover, one remarks from this example that the presence of "strong disorder" is not characterized in terms of the power-law decay exponent of the two point correlations function (the central quantity for the Weinrib-Halperin prediction): indeed, the correlation between $\omega_i$ and $\omega_{i+n}$ decays like $n^{-a}=n^{-(\vartheta-1)}$ and, even when  $\vartheta-1>1$, the Harris criterion for relevance/irrelevance fails (in contrast with the  Weinrib-Halperin prediction).

\subsection{Outline of the results}

In this paper, we consider a bounded (correlated) environment,
and we actually focus on the choice of a binary environment, $\go\in\{-1,+1\}^{\bbN}$, which is also assumed to be stationary and ergodic. We now define precisely what we mean by {\sl infinite disorder}. It is characterized by the fact that favorable regions are very large and frequent: the distance between two attractive regions (\textit{i.e.}\ constituted of only $+1$) of size larger than $n$ is subexponential in $n$. From the ergodicity of the environment, it is enough to consider the exponential decay of $\cT_1(n)$, the distance to the origin of the first attractive region of size larger than $n$.
\begin{definition}\rm
\label{defstrong}
If $\liminf_{n\to\infty} \frac{1}{n}\log \cT_1(n) =0$ $\bbP$-p.s.,
then we say that one has \emph{infinite disorder}.
Moreover, \emph{infinite disorder} is characterized by
\begin{equation}
\label{charactstrong}
\liminf_{n\to\infty} \frac{1}{n} \log \cT_1(n) =0\ \ \bbP-p.s.\ \Leftrightarrow \ \
\liminf_{n\to\infty} -\frac{1}{n} \log \bbP(\go_1=+1,\ldots,\go_n=+1) =0.
\end{equation}
\end{definition}
The presence of infinite disorder is then read
in terms of subexponential decay of the probability
to have a completely attractive environment of size $n$.
We prove \eqref{charactstrong} in Section \ref{sec:comments}, see Lemma \ref{lem:charactstrong}.
The question that we address here is to determine
how the critical properties of
the phase transition are modified in the infinite disorder regime, with respect to the pure case.
Since
\begin{equation}
Z_{N,h}^{\go,\gb}= \bE\left[ e^{ \sum_{i=1}^N (\gb\go_i+h) \gd_i } \right]\leq  \bE\left[ e^{ (h+\gb) \sum_{i=1}^N \gd_i } \right] =Z_{N,h+\gb}^{\rm pur},
\end{equation}
one has $\tf(\gb,h)\leq \tf(0,h+\gb)$
and $h_c(\gb)\geq -\gb$ (since $h_c(0)=0$).
Our first theorem then identifies the appearance of a infinite disorder regime by the fact that
the critical point is equal to its minimal possible value, $-\gb$.
\begin{theorem}
\label{thm:outline1}
Under the assumption that correlations are non-increasing (in the sense of Assumption \ref{hyp1}),
then the following criterion holds:
\begin{center}
$ h_c(\gb)=-\gb$ for every $\gb>0$ if and only if one lies in the infinite disorder regime.
\end{center}
\end{theorem}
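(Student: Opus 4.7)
The a priori bound $h_c(\gb)\ge-\gb$ from the excerpt reduces the equivalence to (i) showing $h_c(\gb)\le-\gb$ for every $\gb>0$ in the infinite disorder regime, and (ii) exhibiting a $\gb>0$ (in fact, every $\gb>0$) with $h_c(\gb)>-\gb$ when infinite disorder fails. Each direction is handled by a matching bound on the partition function, writing $p_n:=\bbP(\go_1=+1,\ldots,\go_n=+1)$ throughout.

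\textbf{Direction (i): lower bound from long attractive stretches.} Fix $h>-\gb$, so that $h+\gb>0$ and $\tf(0,h+\gb)>0$ by Proposition \ref{prophomo}. Definition \ref{defstrong} provides a subsequence $n_k\to\infty$ with $\gep_k:=-\tfrac{1}{n_k}\log p_{n_k}\to0$. The pointwise ergodic theorem gives density $p_{n_k}$ of positions $T$ with $[T+1,T+n_k]$ fully attractive, so for $N$ large I extract $M$ disjoint such blocks in $[1,N]$ with typical inter-block gap $\sim 1/p_{n_k}$. I then restrict $Z_{N,h}^{\go,\gb}$ to trajectories that pin inside each selected block as in the homogeneous model at level $h+\gb$ (contributing $e^{n_k\tf(0,h+\gb)(1+o(1))}$ per block by Proposition \ref{prophomo}) and perform a single jump between consecutive selected blocks, whose cost is $\K(1/p_{n_k})\sim p_{n_k}^{1+\ga}=e^{-(1+\ga)n_k\gep_k}$. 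Assembling the contributions gives, schematically,
\begin{equation*}
\frac{1}{N}\log Z_{N,h}^{\go,\gb}\;\ge\;\frac{p_{n_k}}{2}\Bigl[\,n_k\tf(0,h+\gb)-(1+\ga)n_k\gep_k+o(n_k)\,\Bigr],
\end{equation*}
which is strictly positive once $k$ is large enough that $(1+\ga)\gep_k<\tf(0,h+\gb)$, yielding $\tf(\gb,h)>0$.

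\textbf{Direction (ii): annealed upper bound from rarity of attractive regions.} Suppose $\liminf_n-\tfrac{1}{n}\log p_n\ge c>0$; fixing any $c'\in(0,c)$ one has $p_n\le Ce^{-c'n}$ for every $n$ and some constant $C=C(c')$. Using the identity $e^{\gb\go_i}=e^{-\gb}+2\sinh(\gb)\ind_{\{\go_i=+1\}}$, and writing $S:=\{i:\go_i=+1\}$,
\begin{equation*}
Z_{N,h}^{\go,\gb}\;=\;\sum_{A\subseteq[1,N]}\K(A)\,e^{(h-\gb)|A|}\sum_{B\subseteq A}(e^{2\gb}-1)^{|B|}\ind_{B\subseteq S},
\end{equation*}
where $\K(A)$ denotes the renewal weight of the contact set $A$. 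The key step is to invoke Assumption \ref{hyp1} to obtain the clumping bound $\bbP(B\subseteq S)\le p_{|B|}\le Ce^{-c'|B|}$; the binomial sum then yields
\begin{equation*}
\bbE[Z_{N,h}^{\go,\gb}]\;\le\;C\,Z_{N,\,h+h_\gb}^{\rm pur}\qquad\text{with }\;h_\gb:=-\gb+\log\!\bigl(1+(e^{2\gb}-1)e^{-c'}\bigr),
\end{equation*}
so $\tf(\gb,h)\le\tf(0,h+h_\gb)$, which vanishes as soon as $h+h_\gb\le0$. Elementary algebra shows $-h_\gb+\gb=2\gb-\log(1+(e^{2\gb}-1)e^{-c'})>0$ for every $\gb>0$ (equivalent to $e^{2\gb}>1$), hence $h_c(\gb)\ge-h_\gb>-\gb$ for all $\gb>0$.

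\textbf{Main obstacle.} The decisive analytic input is the clumping bound $\bbP(B\subseteq S)\le p_{|B|}$, which forces Assumption \ref{hyp1} to encode a genuine FKG-type monotonicity (spreading the sites of $B$ can only decrease the probability of their joint $+1$-occupation), beyond mere monotonicity of the two-point function. Establishing this inequality in the generality required is the delicate core of the proof. The remaining technicalities — extracting disjoint, well-spaced attractive blocks in direction (i) via a quantitative ergodic argument, and uniformly controlling the inter-block jump costs through the regular variation of $\K$ from Assumption \ref{assumpK} — are standard but require careful bookkeeping.
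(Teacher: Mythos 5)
Your proposal is correct and follows essentially the same route as the paper: direction (i) is the localization strategy behind Proposition \ref{prop:boundsF} (target a long attractive stretch, weigh the energetic gain $n\,\tf(0,h+\gb)$ against the entropic cost $(1+\ga)$ times the log of the gap to it, controlled via the ergodic theorem and Jensen), and direction (ii) is the annealed binomial expansion of Proposition \ref{prop:localann} combined with Lemma \ref{lemFKG}. The clumping bound $\bbP(\Fav_{\{i_1,\ldots,i_n\}})\le \bbP(\Fav_{\llbracket 1,n\rrbracket})$ that you flag as the main obstacle is in fact immediate here, since Assumption \ref{hyp1} is stated for arbitrary index sets $\cE_1,\cE_2$ rather than just for two-point functions, so one simply shifts the indices together one gap at a time.
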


A more precise statement is made in Theorem \ref{thm:criterion}.
We guess (see Conjecture \ref{conj}) that this result is  true even without Assumption \ref{hyp1} (remark that
 Theorem \ref{thm:criticpoint} below says that the implication ``infinite disorder implies $h_c(\gb)=-\gb$'' does not require such assumption).
As far as the features of the phase transition are concerned,
Theorem \ref{thm:stronglyrel} yields that in
presence of infinite disorder, 
the quenched free energy has always a smoother critical behavior
than that of the pure case: disorder is relevant
irrespective of the value of $\nu^{\rm pur}$,
and is said to be \emph{strongly relevant}.
\begin{theorem}
\label{thm:stronglyrel}
If $h_c(\gb)=-\gb$ (and in particular in presence of infinite disorder),
then for all $\gb>0$, one has
\begin{equation}
\tf(\gb,-\gb+u) \stackrel{u\searrow0}{=} o(\tf(0,u)).
\end{equation}
\end{theorem}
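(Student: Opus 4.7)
The goal is to prove $\tf(\gb,-\gb+u)=o(\tf(0,u))$ as $u\searrow 0$, improving on the trivial bound $\tf(\gb,-\gb+u)\leq \tf(0,u)$ that follows from the pointwise inequality $h+\gb\go_i\leq u$ (using $\go_i\leq 1$ and $h=-\gb+u$). The strict little-$o$ must come from the environment structure: under the hypothesis $h_c(\gb)=-\gb$, the system lies in the infinite disorder regime (Theorem~\ref{thm:outline1}), so attractive regions (long runs of $+1$'s) are sparse and the renewal must ``hunt'' for them to collect the tiny reward $u$.

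I would implement this via a coarse-graining at a scale $\ell=\ell(u)\to\infty$ tuned relative to the pure correlation length $\xi(u):=1/\tf(0,u)$. Call a site $i$ a \emph{good site} if it lies in a maximal run of $+1$'s of length at least $\ell$; by ergodicity, good sites have asymptotic density $q(\ell)=\bbE[L_+\ind_{L_+\geq \ell}]/\bbE[L_+]$, and a size-biasing argument gives $q(\ell)\to 0$ as $\ell\to\infty$. Decompose $Z_{N,-\gb+u}^{\go,\gb}$ according to the ordered sequence of good blocks visited by $\tau$: each visited good block of length $\ell_i\geq\ell$ contributes at most the pure partition function $Z_{\ell_i,u}^{\rm pur}\leq \exp(\ell_i\,\tf(0,u)+O(\log\ell_i))$; jumps between consecutive visited blocks contribute renewal-kernel factors bounded via $\K$ and $\bar\K$; and the ``junk'' passages (intervals in which every $+1$-run has length below $\ell$) contribute negligible factors, since $e^{u-2\gb}<1$ on $-1$-sites while the short $+1$-runs provide only sub-dominant rewards. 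Taking $\bbE\log$, dividing by $N$, and passing to the limit yields an estimate of the form
\[
\tf(\gb,-\gb+u)\;\leq\; q(\ell)\,\tf(0,u)\,(1+o_{\ell}(1)),
\]
valid for every $\ell$, and letting $\ell=\ell(u)\to\infty$ slowly enough as $u\searrow 0$ gives the claim.

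The main obstacle is the rigorous justification of the junk-region and boundary-term estimates, together with the tuning of the two scales: $\ell(u)$ must be large enough for the pure-partition-function bound inside a good block to be sharp (roughly $\ell(u)\gg \xi(u)$), yet small enough for $q(\ell(u))$ to vanish as $u\to 0$. When the $+1$-block length distribution has heavy tails---a regime fully compatible with infinite disorder---the naive block decomposition is delicate, and I would expect to close the argument by combining it with a fractional-moment bound on $Z_N^{\go,\gb}$ and a change-of-measure tilt of $\bbP$ that suppresses the occurrence of overly long $+1$-runs, in the spirit of \cite{DGLT07,GLT09}; the infinite disorder hypothesis, which ensures that $\bbP(\go_1=\cdots=\go_\ell=+1)$ decays sub-exponentially, is precisely what makes the cost of such a tilt negligible at the relevant scale $\ell(u)$.
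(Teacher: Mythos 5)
Your overall philosophy (coarse-grain, observe that only rare attractive regions can contribute, and use that their density vanishes) is indeed the one the paper follows: Theorem \ref{thm:stronglyrel} is deduced there from the upper bound \eqref{upboundF} of Proposition \ref{prop:boundsF} together with the uniform integrability of the ergodic averages $T_n/n$. But your specific decomposition has a gap that is fatal precisely in the case $\ga<1$, which is the only hard case ($\ga>1$ is handled by \eqref{simpleboundF} since $\tf(u)\asymp u$ there). The two requirements you impose on the threshold $\ell(u)$ are incompatible. For the per-site reward inside a good block to be of order $\tf(0,u)$ rather than $u$, you need $\ell(u)\gtrsim \tf(0,u)^{-1}$ (below the correlation length the relevant bound is $Z^{\rm pur}_{n,u}\leq e^{nu}$, and $u\gg\tf(0,u)$ when $\ga<1$). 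But then your ``junk'' regions contain $+1$-runs of every length up to $\ell(u)\gg \gb u^{-1}$, and a run of length $\xi$ with $\gb u^{-1}\ll\xi<\ell(u)$ contributes a factor of order $e^{u\xi}\gg 1$ that the single $e^{-2\gb}$ from the adjacent $-1$'s does not compensate; so ``junk passages contribute negligible factors'' is false. Conversely, with the threshold $\ell(u)\asymp\gb u^{-1}$ for which the junk really is negligible (this is Lemma \ref{lemZbloc}), the resulting bound is $C\,u\,\bbE[\xi_1\ind_{\{\xi_1>\mathbf{c}\gb u^{-1}\}}]=u\cdot o(1)$, which is \emph{not} $o(\tf(0,u))$ when $\ga<1$. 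A symptom of the problem is your displayed estimate $\tf(\gb,-\gb+u)\leq q(\ell)\tf(0,u)(1+o_\ell(1))$ ``valid for every $\ell$'': if it held with an error depending on $\ell$ alone, sending $\ell\to\infty$ at fixed $u$ would give $\tf(\gb,-\gb+u)=0$ for all $u$, contradicting localization.

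The ingredient you are missing is the paper's resolution of this tension: the meta-block is defined not by a length threshold on individual $+1$-runs but by a fixed number $L(u)=\lfloor\mathbf{c}\gb^{1/\ga}\tf(u)^{-1}\rfloor$ of sign changes, and the dichotomy is on whether its total length $T_{L(u)}$ is atypically large. If $T_{L(u)}<4\bbE[T_{L(u)}]$, the $-1$'s have positive density in the meta-block, and Lemma \ref{lememprunte} (the renewal hits a subset of density $\varphi$ with probability $\geq c\varphi^{1+\ga}$) shows the entropic cost of avoiding them beats the $O(\gb)$ energy gain, provided $\mathbf{c}$ is small; hence the block contributes nothing. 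Otherwise one uses the crude bound $e^{c\gb^{1-\nu^{\rm pur}}\tf(u)T_{L(u)}}$, and the little-$o$ comes from $\bbE[\frac{T_L}{L}\ind_{\{T_L\geq4\bbE[T_2]L\}}]\to0$. Finally, the fractional-moment/change-of-measure machinery you invoke as a fallback is not used and not needed here; it is designed to prove $\tf=0$, not a quantitative $o(\tf(0,u))$ bound on a strictly positive free energy, and tilting a general ergodic correlated binary law to suppress long $+1$-runs is not an operation you can perform at the stated level of generality.
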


We also give bounds on the free energy in a very
general setting, see Proposition \ref{prop:boundsF}.
To complete the picture, we present a natural example,
the \emph{Gaussian signs} environment:  the $\{-1,1\}$-valued
sequence $\go$ is simply based on the sign of a correlated Gaussian sequence
(with power-law decaying correlations, of decay exponent $a$).
Infinite disorder regime appears when $a<1$,
and the phase transition is then of infinite order.

\smallskip
Let us now highlight the organization of the paper.
In Section \ref{sec:results}, we present some useful
notations, and expose our main results,
 that we emphasize thanks to the Gaussian signs example. We comment these results
point by point in Section \ref{sec:comments}.
In Section \ref{sec:annealed}, we discuss the annealed model:
it exhibits an unconventional behavior in the infinite disorder regime,
and enables us to prove parts of our results.
In Section \ref{sec:proof}, we prove the remaining results, giving
lower bounds and upper bounds on the free energy.
In Appendix, we prove some of the Gaussian estimates needed for the Gaussian signs example,
and Lemma \ref{lembasic} on the homogeneous model.

\section{Main results}
\label{sec:results}

\subsection{First notations and results}
\label{sec:notations}

We consider a sequence $\go=\{\go_i\}_{i\geq -1}$ (we choose $i\geq-1$
instead of $i\in\N$ for notation convenience, see the following definitions),
and we assume that $\go$ is ergodic and $\{-1,1\}$-valued (we note abusively $\go\in\{-1,1\}^{\N}$).
We also take $\go$ 
non-trivial, in the sense that $\bbP(\go_1=+1)>0$ and $\bbP(\go_1=-1)>0$.

Our environment is then composed of
favorable and unfavorable regions, whether $\go_i=+1$ or $\go_i=-1$.
For every set of indices $\cE=\{i_1,\ldots,i_n\}$,
we define the event
\begin{equation}
\Fav_{\cE}=\Fav_{\{i_1,\ldots,i_n\}}:= \left\{ \go_{i_1}=+1,\ldots,\go_{i_n}=+1  \right\}
\end{equation}
that the environment is attractive at sites $i_1,\ldots,i_n$.
With Definition \ref{defstrong},
one characterizes infinite disorder by the subexponential
decay of $\bbP(\Fav_{\llbracket 1,n \rrbracket})$.

Given the environment $\go=\{\go_i\}_{i\geq-1}$,
we condition it to have $\go_{-1}=-1$, $\go_0=+1$ (which has positive probability, so that the free energy is not affected).
We then define the sequences $(T_{n})_{n\geq 0}$
and $(\xi_n)_{n\geq 1}$ iteratively, setting $T_0:=0$, and for all $n\geq 1$
\begin{equation}
\label{defxi}
\begin{split}
  T_{n}&:=\inf \{i> T_{n-1}\ ;\ \go_{i+1}\neq \go_{i}\},\\
  \xi_n & := T_{n}-T_{n-1}
\end{split}
\end{equation}
Thus our system is cut into segments of size $\xi_n$, on which
$\go$ is constant valued, equal alternatively to $+1$ and to $-1$
(we write $\go\equiv+1$ and  $\go\equiv -1$).
The choice of conditioning to $\go_{-1}=-1,\go_0=+1$
enables us to identify the blocks with odd indices $(T_{2j},T_{2j+1}]$ for $j\geq0$
(therefore of size $\xi_{2j+1}$),
as the attractive ones.

\begin{rem}\rm
\label{rem:xifini}
The ergodicity and non triviality
of the sequence $\go$ implies that $\bbE[\xi_1]<+\infty$ and $\bbE[\xi_2]<+\infty$.
Indeed, one considers the (ergodic) sequence
$\bar\go=\{\bar\go_n\}_{n\geq0}$ defined by $\bar\go_n:=(\go_{n-1},\go_{n})$
for all $n\geq 0$.
This sequence is, according to our notations, conditioned to
start with $\bar\go_0=(-1,1)$.
Then in \cite[Ch.I.2.c]{Shields}, the \emph{generalized renewal process} of the set $\{(-1,1)\}$
is defined as the sequence of indices $\{k\geq 0, \bar\go_k=(-1,1)\}= \{T_{2j}\}_{j\geq0}$, 
and the \emph{return-time process} to the set $\{(-1,1)\}$ (of positive measure) is defined as the sequence 
$\{T_{2j}-T_{2(j-1)}\}_{j\in\N} = \{\xi_{2j-1}+\xi_{2j}\}_{j\in\N}$.
It is shown that the return-time process is ergodic (see \cite[Th.I.2.19]{Shields}), 
and \cite[Eq. (14) Ch.I.2]{Shields}
states that the first return time to $(-1,1)$ (\textit{i.e.}\  $T_2=\xi_1+\xi_2$)
has expectation $\bbP(\go_{1}=-1,\go_2=1)^{-1}<+\infty$.
\end{rem}

We now introduce a notion of ``good'' block: we call ``$A$-{\sl block}''
a segment $(T_i,T_{i+1}]$ on which $\go\equiv+1$ (take $i$ even), and whose size
$\xi_{i+1}$ is larger than $A$.
We define
\begin{equation}
\cT_1(A) = \inf \{ T_{2i+1} \ ; \ \xi_{2i+1}\geq A \}
\end{equation}
the position of the first $A$-{\sl block}, and iteratively,
$\cT_k(A)$ the position of the $k^{\rm th}$ $A$-{\sl block},
\begin{equation}
 \label{defTa}
\cT_{k}(A):= \inf \{ T_{2i+1}>\cT_{k-1}(A) \ ; \ \xi_{2i+1}\geq A \}.
\end{equation}
The quantities $\cT_k(A)-\cT_{k-1}(A)$ represent the distances
between the rare large attractive regions of length at least $A$,
and that is why the subexponential decay (in $A$)
of $\cT_k(A)-\cT_{k-1}(A)$ is used to characterize the presence of infinite disorder (recall Definition \ref{defstrong}).
We regroup in Figure \ref{fig:regions}
the above notations, \textit{i.e.}\  the decomposition of our environment $\go$
into elementary blocks $(T_{i-1},T_i]_{i\in\N}$ of size $\xi_i$, and for $A>0$ fixed, into meta-blocks
$(\cT_{k-1}(A),\cT_k(A)]_{k\in\N}$.
\begin{figure}[htbp]
\centerline{
\psfrag{0}{$0$}
\psfrag{xi1}{\footnotesize $\xi_1$}
\psfrag{xi2}{\footnotesize $\xi_2$}
\psfrag{xi3}{\footnotesize $\xi_3$}
\psfrag{t1}{\small $T_1$}
\psfrag{t2}{\small $T_2$}
\psfrag{t3}{\small $T_3$}
\psfrag{T1}{$\cT_1(A)$}
\psfrag{T2}{$\cT_2(A)$}
\psfrag{d1}{\small $\cT_1(A)$}
\psfrag{d2}{\small $\cT_2(A)-\cT_1(A)$}
\psfrag{d3}{}
\psfrag{go1}{$\go=-1$}
\psfrag{go0}{$\go=+1$}
\psfrag{ablock}{$A$-{\sl blocks}}
\psfrag{xii1}{\small $\xi_{k_1}\geq A$}
\psfrag{xii2}{\small $\xi_{k_2}\geq A$}
\psfig{file=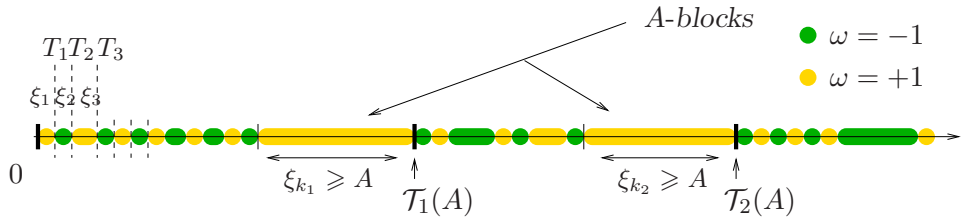,width=5in}}
  \begin{center}
    \caption{\label{fig:regions}
     Decomposition of the system into elementary segments $(T_{i-1},T_i]_{i\in\N}$ of size $\xi_i$,
     in which the value of $\go$ is constant. An $A$-{\sl block} is a segment $(T_{i-1},T_i]_{i\in\N}$
     constituted of $+1$'s, and which is larger than $A$.
     With a fixed parameter $A$, we divide our system into meta-blocks
     $(\cT_{k-1}(A),\cT_k(A)]_{k\in\N}$,
     composed of blocks with $\go\equiv-1$ or with length smaller than $A$, and then of one ending $A$-{\sl block}.
    }
  \end{center}
\end{figure}

We stress that the quantity
$\frac{1}{T_N} \sum_{n=1}^{N} \ind_{\{n \text{ is } odd\, ,\, \xi_n\geq A\}}$
represents the density of $A$-{\sl blocks} in a system of size $T_N$.
Using Birkhoff's Ergodic Theorem (cf. \cite[Chap. 2]{Nadk}), one gets that
the (asymptotic) density of $A$-{\sl blocks} is then equal to $\bbE[T_2]^{-1}\bbP(\xi_1\geq A)$.
Since one has
$\frac{\cT_N(A)}{N} = \left( \frac{1}{\cT_N(A)} \sum_{n=1}^{\gga_N(A)} \ind_{\{n \text{ is } odd\, ,\, \xi_n\geq A\}}\right)^{-1}$, where $\gga_N(A)$ is the index such that $T_{\gga_N(A)} = \cT_N(A)$
another use of the Ergodic Theorem yields that
the mean distance to the first $A$-{\sl block} is
\begin{equation}
\label{eq:T1A}
 \bbE[\cT_1(A)] =  
    \bbE[T_2]\,
  \bbP\left( \xi_1\geq A \right)^{-1} =
  \bbE[T_2] \bbP(\Fav_{\llbracket  1,A \rrbracket })^{-1}.
\end{equation}

\smallskip
In the sequel, constants whose precise value is not important will be in general denoted
$c,c',C,C'$, and to simplify notations their value can change from line to line.
We moreover keep track of
the dependence of the constants on $\gb$,
even if $\gb$ is thought as a fixed constant.

\subsection{Appearance of an infinite disorder regime}
\label{sec:mainthm}

The question we address here is that of understanding the threshold leading
from $h_c(\gb)>-\gb$ to $h_c(\gb)=-\gb$.
Our first theorem gives a criterion,
provided that correlations are non-increasing, in the following sense.
\begin{hyp}[Non-increasing correlations]
\label{hyp1}
For $l\in\bbN$ we use the notation $\theta^{l}\cE:= \{i+l, i\in\cE\}$.
The correlations between the events $\Fav_{\cE}$ 
are said to be non-increasing if, for every two sets of indices $\cE_1$ and $\cE_2$
verifying that $\max\cE_1 < \min\cE_2$, one has
$$\mathbb{P}(\Fav_{\cE_1\cup\, \theta^{k+1}\cE_2}) \leq \mathbb{P}(\Fav_{\cE_1\cup\, \theta^{k}\cE_2}) \quad \text{for all }  k\geq 0.$$
In other words, the covariances $\mathbb{C}\mathrm{ov} (\Fav_{\cE_1},\Fav_{\theta^k\cE_2})$
are non-increasing in $k$.
\end{hyp}

\begin{theorem}
\label{thm:criterion}
Under Assumption \ref{hyp1} 
one has the following criterion
\begin{equation*}
 h_c(\gb)=-\gb \ \textnormal{for all } \gb>0 \quad \Longleftrightarrow
   \quad \liminf_{n\to\infty} -\frac{1}{n} \log \bbP(\Fav_{\llbracket 1,n\rrbracket}) = 0
     \quad \text{(infinite disorder)}.
\end{equation*}
Moreover, in the conventional regime (\textit{i.e.}\ in absence of infinite disorder), one has that $h_c(\gb)>-\gb$ for all $\gb>0$.
\end{theorem}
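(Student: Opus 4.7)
The criterion splits into two implications. The forward direction, infinite disorder $\Rightarrow h_c(\gb)=-\gb$, is the content of Theorem~\ref{thm:criticpoint} and does not require Assumption~\ref{hyp1}: the idea is to restrict the renewal to the strategy of jumping between successive $A$-blocks $\cT_k(A)$ and, on each, behaving as pure pinning of length $\ge A$ at parameter $h+\gb=u$. Combined with Birkhoff's theorem along meta-blocks, Jensen, and the identity \eqref{eq:T1A}, this yields a lower bound of the form
\begin{equation*}
\tf(\gb,-\gb+u) \;\geq\; \frac{A\,\tf(0,u) - (1+\ga)\,|\log\bbP(\Fav_{\llbracket 1,A\rrbracket})| - C}{\bbE[\cT_1(A)]},
\end{equation*}
so that in the infinite disorder regime one chooses $A$ large with $|\log\bbP(\Fav_{\llbracket 1,A\rrbracket})| \ll A$ and the numerator becomes positive.

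The substance of Theorem~\ref{thm:criterion} is therefore the converse: in the conventional regime $\bbP(\Fav_{\llbracket 1,n\rrbracket}) \leq e^{-c n}$ for all large $n$, I must show $h_c(\gb)>-\gb$ for every $\gb>0$, equivalently $\tf(\gb,-\gb+u)=0$ for small $u>0$. A first, elementary step is to deduce exponential tails on the individual attractive block lengths $\xi_{2j+1}$ from the direct inequality $\bbP(\xi_1 \geq n) \leq C\,\bbP(\Fav_{\llbracket 1,n\rrbracket}) \leq C e^{-cn}$, so that the density of sites lying in an attractive region of length at least $M$ is exponentially small in $M$.

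With this input, the plan is to upper bound $Z_N^{\go,\gb,-\gb+u}$ by decomposing each renewal trajectory according to the sequence of attractive blocks it visits: a visited attractive block of length $\ell$ contributes at most the pure pinning partition function at parameter $u$, namely $\exp(\ell\,\tf(0,u)+O(\log\ell))$; an excursion of length $m$ between two consecutive visits contributes a factor $\K(m)$; each contact with a negative site pays a factor $e^{-(2\gb-u)}$. Summing over visit patterns and taking $\bbE\log$, the bound should collapse to
\begin{equation*}
\frac{1}{N}\,\bbE \log Z_N^{\go,\gb,-\gb+u} \;\leq\; C_1\,\tf(0,u) - C_2(\gb) + o(1),
\end{equation*}
where $C_2(\gb)>0$ is the entropic cost per unit length of skipping over negative regions, kept strictly positive by the exponential tails on $\xi_{2j+1}$. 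Since $\tf(0,u) \to 0$ as $u\to 0^+$, taking $u$ sufficiently small gives $\tf(\gb,-\gb+u)=0$.

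The hard part is this upper bound, specifically the combinatorial sum over visited-block patterns, which must not blow up. This is where Assumption~\ref{hyp1} plays its decisive role: the non-increasing covariance property of favorable events lets one dominate joint probabilities by near-product expressions, so the sum reduces to a tractable geometric-type series rather than a sum over correlated events. Without Assumption~\ref{hyp1} one cannot rule out pathological clustering of large favorable regions even when $\bbP(\Fav_{\llbracket 1,n\rrbracket})$ alone decays exponentially, which is why the equivalence is proved only under this hypothesis (and left as Conjecture~\ref{conj} in general).
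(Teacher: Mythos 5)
Your treatment of the forward implication (infinite disorder $\Rightarrow h_c(\gb)=-\gb$) is correct and is exactly the paper's route: the localization strategy on $A$-blocks, super-additivity along meta-blocks, Birkhoff and \eqref{eq:T1A}, as in Proposition~\ref{prop:boundsF}.

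The converse is where your proposal has a genuine gap. You aim for a quenched upper bound of the form $\frac1N\bbE\log Z_N \leq C_1\tf(0,u)-C_2(\gb)+o(1)$ with a strictly positive ``entropic cost per unit length'' $C_2(\gb)$ of skipping negative regions. For a renewal with $\K(m)\sim c_K m^{-(1+\ga)}$ this cost does not exist: a jump of length $m$ costs only $(1+\ga)\log m+O(1)$, i.e.\ $o(m)$, so no per-unit-length penalty survives the limit. The honest output of the trajectory/block decomposition you describe is precisely the paper's Lemma~\ref{lem:coarse} together with Lemmas~\ref{lemZbloc}--\ref{lemZbloc2}: short attractive blocks contribute a factor $\leq 1$ (not $\leq e^{-c}$), long ones contribute $e^{u\xi}$, and one lands on the bound \eqref{simpleboundF}, $\tf(\gb,-\gb+u)\leq C u\,\bbE[\xi_1\ind_{\{\xi_1>\mathbf{c}\gb u^{-1}\}}]$. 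With exponential tails on $\xi_1$ this is exponentially small in $1/u$ but \emph{strictly positive for every $u>0$}, so it cannot yield $h_c(\gb)>-\gb$. Upgrading a one-scale coarse graining to an actual proof of delocalization would require the multiscale iteration of Section~\ref{sec:conj}, which the paper only carries out heuristically (hence Conjecture~\ref{conj}).

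The paper's actual proof of the converse is annealed, and much shorter. Assumption~\ref{hyp1} is used only through Lemma~\ref{lemFKG}, to upgrade $\bbP(\Fav_{\llbracket1,n\rrbracket})\leq e^{-cn}$ to $\bbP(\Fav_{\{i_1,\ldots,i_n\}})\leq e^{-cn}$ for \emph{arbitrary} index sets (not to tame a quenched combinatorial sum, as you suggest). Then Proposition~\ref{prop:localann} writes $e^{\gb\sum\tilde\go_n\gd_n}$ via the binomial expansion $\sum_m(e^\gb-1)^m\sum_{i_1<\cdots<i_m}\prod_k\tilde\go_{i_k}\gd_{i_k}$, takes $\bbE$ using the arbitrary-index bound, and obtains $\bbE Z_{N,h}^{\go,\gb}\leq Z^{\rm pur}_{N,h+(1-c_\gb)\gb}$, whence $h_c(\gb)\geq h_c^{\a}(\gb)\geq-(1-c_\gb)\gb>-\gb$. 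You should replace your quenched argument by this annealed one (or supply the missing multiscale machinery, which is a substantially harder and currently open route).
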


We conjecture in Section \ref{sec:comments1} that Assumption \ref{hyp1}
is not necessary to get this criterion.
The next theorem already shows that certain implications of Theorem 
\ref{thm:criterion} hold under weaker assumptions.

\begin{theorem}
\label{thm:criticpoint}
Here we do not make  Assumption \ref{hyp1}.
\smallskip\\
{\rm (1)} Infinite disorder (Definition \ref{defstrong}) implies that $h_c(\gb)=-\gb$ for all $\gb> 0$.
\smallskip\\
{\rm (2)} If 
there exists a constant $c>0$ such that for all
sequence of indices $1\!\leq \! i_1\!<\!i_2\!<\!\cdots<i_n$
one has that $\bbP(\Fav_{\{i_1,\ldots,i_n\}})\leq e^{-c n}$ for all $n\geq 1$,
then there exists
a constant $c_{\gb}>0$ (uniformly bounded away from $0$
for $\gb\in(0,1)$) such that for all $\gb>0$ one has
$h_c(\gb)\geq -(1-c_{\gb}) \gb>-\gb$.
\end{theorem}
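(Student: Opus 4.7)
The plan is to prove the two statements by different means: (1) via a rare-region lower bound on the quenched free energy that reduces the attractive block to the supercritical homogeneous pinning model, and (2) via an annealed upper bound, exploiting the decorrelation hypothesis $\bbP(\Fav_J)\le e^{-c|J|}$ to control the cumulant of $\gb\sum\go_n\gd_n$ uniformly and thereby bound the annealed partition function by a shifted homogeneous one.

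For (1), fix $h>-\gb$, set $h_0:=h+\gb>0$, and note $\tf(0,h_0)>0$ by Proposition~\ref{prophomo}. Given $A\ge1$, I restrict the expectation defining $Z_{N,h}^{\go,\gb}$ to trajectories that are pinned at both endpoints of (the last $A$ positions of) each of the first $K_N:=\sup\{k:\cT_k(A)\le N\}$ $A$-blocks, do a single long jump between consecutive $A$-blocks, and are otherwise free inside each block. Since $\go\equiv+1$ on each block, the internal contribution is bounded below by the pinned homogeneous partition function at parameter $h_0$, which is at least $\exp(A\,\tf(0,h_0)-o(A))$; the long inter-block jump has size at most $\cT_k(A)-\cT_{k-1}(A)$ and, by Assumption~\ref{assumpK}, contributes at least $c\,\K(\cT_k(A)-\cT_{k-1}(A))$. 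Taking logarithms, dividing by $N$, invoking Birkhoff's theorem on the stationary ergodic sequence $\{\cT_k(A)-\cT_{k-1}(A)\}_k$ and using \eqref{eq:T1A}, one gets, for each fixed $A$,
\[
\tf(\gb,h)\ \ge\ \frac{\bbP(\Fav_{\llbracket 1,A\rrbracket})}{\bbE[T_2]}\Big[A\,\tf(0,h_0)-o(A)+\bbE[\log\K(\cT_1(A))]\Big].
\]
Using $\log\K(n)\ge-(1+\ga+o(1))\log n$ and Jensen,
$-\bbE[\log\K(\cT_1(A))]\le(1+\ga+o(1))\log\bbE[\cT_1(A)]=(1+\ga+o(1))\log(\bbE[T_2]/\bbP(\Fav_{\llbracket 1,A\rrbracket}))$. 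Under infinite disorder, for every $\gep>0$ there exist arbitrarily large $A$ with $\bbP(\Fav_{\llbracket 1,A\rrbracket})\ge e^{-\gep A}$; choosing $\gep<\tf(0,h_0)/(2(1+\ga))$ and $A$ in that subsequence makes the bracket strictly positive, so $\tf(\gb,h)>0$, and letting $h\downarrow-\gb$ yields $h_c(\gb)=-\gb$.

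For (2), for fixed $\tau$ let $I:=\{n\le N:\gd_n=1\}$. Expanding $e^{\gb\go_n}=e^{-\gb}+(e^{\gb}-e^{-\gb})\ind_{\{\go_n=+1\}}$ in the product over $n\in I$,
\[
\bbE\!\Big[e^{\gb\sum_{n\in I}\go_n}\Big]=\sum_{J\subset I}e^{-\gb(|I|-|J|)}(e^{\gb}-e^{-\gb})^{|J|}\,\bbP(\Fav_J),
\]
and the hypothesis together with the binomial formula yields $\bbE[e^{\gb\sum_{n\in I}\go_n}]\le e^{\gl(\gb)|I|}$ with $\gl(\gb):=-\gb+\log(1+(e^{2\gb}-1)e^{-c})$. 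Hence $\bbE[Z_{N,h}^{\go,\gb}]\le Z_{N,h+\gl(\gb)}^{\rm pur}$, so by Jensen $\tf(\gb,h)\le\tf(0,h+\gl(\gb))$, which vanishes for $h\le-\gl(\gb)$ by Proposition~\ref{prophomo}; therefore $h_c(\gb)\ge-\gl(\gb)$. Setting $g(\gb):=\gb-\gl(\gb)=2\gb-\log(1+(e^{2\gb}-1)e^{-c})$, one computes
\[
g'(\gb)=\frac{2(1-e^{-c})}{1+(e^{2\gb}-1)e^{-c}},
\]
which is strictly positive and bounded below on $[0,1]$ by $c_*:=2(1-e^{-c})/(1+(e^2-1)e^{-c})>0$. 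Thus $g(\gb)\ge c_*\gb$ on $[0,1]$, so $c_\gb:=g(\gb)/\gb$ satisfies $c_\gb\ge c_*>0$ on $(0,1)$, proving $h_c(\gb)\ge-(1-c_\gb)\gb>-\gb$.

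The main difficulty lies in (1): the $\log\K$-cost of the rare long jumps between $A$-blocks must be beaten by the linear pure-free-energy gain inside the blocks. The Jensen bound converts this cost into the $-\log$ of $\bbP(\Fav_{\llbracket 1,A\rrbracket})$, whose sub-exponential decay is exactly the infinite-disorder condition; this forces the cost to grow as $\gep A$ while the gain is $A\,\tf(0,h_0)$ with slope independent of $\gep$, so small $\gep$ wins. Part (2) is essentially an algebraic computation once the annealed reduction has been carried out.
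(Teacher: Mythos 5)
Your proof is correct and follows essentially the same route as the paper: part (1) is the paper's localization strategy on $A$-\textsl{blocks} (super-additivity, Birkhoff's theorem, \eqref{eq:T1A} and Jensen applied to $\bbE[\log\cT_1(A)]$, here with $A$ chosen along the $\liminf$ subsequence rather than via $\gep^{-1}$), and part (2) is the paper's annealed binomial expansion of Proposition \ref{prop:localann}, only with the exact decomposition $e^{\gb\go_n}=e^{-\gb}+(e^{\gb}-e^{-\gb})\ind_{\{\go_n=+1\}}$ in place of the bound $\go_n\leq\ind_{\{\go_n=+1\}}$, yielding a marginally sharper constant $c_\gb$.
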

We point out that the condition in point (2) is weaker than assuming 
absence of infinite disorder 
plus Assumption \ref{hyp1} as was done in Theorem \ref{thm:criterion}. Indeed,
\begin{lemma}
\label{lemFKG}
Under Assumption \ref{hyp1}, if one has that
$\liminf_{n\to\infty } -\frac1n \log \bbP(\Fav_{\llbracket 1,n\rrbracket}) >0$,
then there exists a constant $c>0$ such that
for any sequence of indices $i_1\!<\! i_2\! <\! \cdots\! $
one has
\begin{equation}
\bbP(\Fav_{\{i_1,\ldots,i_n\}}) \leq e^{-cn}, \quad \text{for all } n\geq1.
\end{equation}
\end{lemma}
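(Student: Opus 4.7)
The plan is to reduce the general case to consecutive indices using the monotonicity in Assumption \ref{hyp1}, and then establish a uniform exponential bound on $\bbP(\Fav_{\llbracket 1,n\rrbracket})$ via a subadditivity argument built from the same assumption together with ergodicity.

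For the reduction, I would iteratively \emph{squeeze} the given indices $i_1<i_2<\cdots<i_n$. Setting $\cE_1=\{i_1,\ldots,i_{n-1}\}$ and $\cE_2=\{i_{n-1}+1\}$ (so that $\max\cE_1<\min\cE_2$), the map $k\mapsto \bbP(\Fav_{\cE_1\cup\theta^k\cE_2})$ is non-increasing by Assumption \ref{hyp1}; comparing $k=i_n-i_{n-1}-1$ with $k=0$ gives
\begin{equation*}
\bbP(\Fav_{\{i_1,\ldots,i_n\}}) \leq \bbP(\Fav_{\{i_1,\ldots,i_{n-1},i_{n-1}+1\}}).
\end{equation*}
Repeating the argument -- now with $\cE_1'=\{i_1,\ldots,i_{n-2}\}$ and $\cE_2'=\{i_{n-2}+1,i_{n-2}+2\}$, and so on down the chain -- one eventually obtains $\bbP(\Fav_{\{i_1,\ldots,i_n\}}) \leq \bbP(\Fav_{\{i_1,i_1+1,\ldots,i_1+n-1\}})$, which by stationarity equals $\bbP(\Fav_{\llbracket 1,n\rrbracket})$.

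For subadditivity, set $u_n:=-\log \bbP(\Fav_{\llbracket 1,n\rrbracket})$. Applying Assumption \ref{hyp1} to $\cE_1=\llbracket 1,n\rrbracket$ and $\cE_2=\llbracket n+1,n+m\rrbracket$ (so that $\max\cE_1<\min\cE_2$), the sequence $a_k:=\bbP(\Fav_{\cE_1\cup \theta^k\cE_2})$ is non-increasing in $k\geq 0$, with $a_0=\bbP(\Fav_{\llbracket 1,n+m\rrbracket})$. By ergodicity and Birkhoff's theorem, the Cesaro averages $\frac{1}{N}\sum_{k=0}^{N-1}a_k$ converge to $\bbP(\Fav_{\cE_1})\bbP(\Fav_{\cE_2})=\bbP(\Fav_{\llbracket 1,n\rrbracket})\bbP(\Fav_{\llbracket 1,m\rrbracket})$ (using stationarity for the second factor); a non-increasing sequence necessarily shares the limit of its Cesaro means, whence $a_k\searrow \bbP(\Fav_{\llbracket 1,n\rrbracket})\bbP(\Fav_{\llbracket 1,m\rrbracket})$. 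In particular
\begin{equation*}
\bbP(\Fav_{\llbracket 1,n+m\rrbracket}) \geq \bbP(\Fav_{\llbracket 1,n\rrbracket})\,\bbP(\Fav_{\llbracket 1,m\rrbracket}),
\end{equation*}
which is exactly $u_{n+m}\leq u_n+u_m$. Fekete's lemma then gives $u_n/n\to \inf_{n\geq 1}u_n/n$, so the assumption $\liminf u_n/n>0$ upgrades to the \emph{uniform} bound $u_n/n\geq c>0$ for every $n\geq 1$, i.e.\ $\bbP(\Fav_{\llbracket 1,n\rrbracket})\leq e^{-cn}$. Combining with the reduction step concludes.

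The main subtlety is the upgrade from Cesaro to pointwise convergence of $a_k$: ergodicity alone provides only averaged decorrelation, and one genuinely exploits the monotonicity from Assumption \ref{hyp1} both to identify the pointwise limit and to obtain the crucial lower bound $a_0\geq \bbP(\Fav_{\cE_1})\bbP(\Fav_{\cE_2})$ that drives subadditivity.
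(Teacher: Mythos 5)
Your proof is correct. The reduction step is exactly the paper's argument (the paper compresses it into the phrase ``repeated use of Assumption \ref{hyp1}''), and your iterative squeezing of the last gap, combined with stationarity to identify $\bbP(\Fav_{\{i_1,\dots,i_1+n-1\}})$ with $\bbP(\Fav_{\llbracket 1,n\rrbracket})$, is precisely what is meant there. Where you diverge is in upgrading $\liminf_n -\frac1n\log\bbP(\Fav_{\llbracket 1,n\rrbracket})>0$ to a bound valid for \emph{all} $n\geq 1$: the paper's intended route is elementary (the liminf gives the rate for $n$ large, and for the finitely many remaining $n$ one uses the standing non-triviality assumption $\bbP(\go_1=-1)>0$, which forces $\bbP(\Fav_{\llbracket 1,n\rrbracket})\leq\bbP(\go_1=+1)<1$), whereas you derive supermultiplicativity $\bbP(\Fav_{\llbracket 1,n+m\rrbracket})\geq\bbP(\Fav_{\llbracket 1,n\rrbracket})\bbP(\Fav_{\llbracket 1,m\rrbracket})$ and invoke Fekete. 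Your supermultiplicativity argument is sound: ergodicity gives Cesaro convergence of $a_k=\bbP(\Fav_{\cE_1}\cap\theta^{-k}\Fav_{\cE_2})$ to the product of probabilities, and the monotonicity from Assumption \ref{hyp1} correctly upgrades this to pointwise convergence and yields $a_0\geq\lim_k a_k$. This is heavier machinery than needed, but it buys something: the positivity of the rate for small $n$ comes out of Fekete ($\inf_m u_m/m=\lim u_n/n>0$) rather than from the separate non-triviality hypothesis on $\go$, so your argument is self-contained given only Assumption \ref{hyp1}, stationarity and ergodicity.
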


\begin{proof}
For all indices
$i_1\!<\!\cdots\!<\! i_n$, one has, thanks to repeated use of  Assumption \ref{hyp1},  that
$\bbP(\Fav_{\{i_1,\ldots,i_n\}})\leq \bbP(\Fav_{\llbracket 1, n\rrbracket})$,
thanks to Assumption \ref{hyp1}.
From this, one gets that
$\bbP(\Fav_{\{i_1,\ldots,i_n\}})\leq \bbP(\Fav_{\llbracket 1, n\rrbracket}) \leq e^{-c  n}$,
which gives the conclusion of Lemma \ref{lemFKG}.
\end{proof}

Point (1) follows from Proposition \ref{prop:boundsF} below.
Point (2) of Theorem \ref{thm:criticpoint} is proven 
 in Section \ref{sec:annclassic}, via bounds on the annealed model.

\subsection{Bounds on the free energy}
\label{sec:bounds}

\begin{definition}\rm
\label{defdefeps}
One defines
\begin{equation}
\label{defeps}
\begin{split}
 \gep(x):= \inf_{n\leq x} -\frac{1}{n}\log \bbP(\xi_1\geq n),
\end{split}
\end{equation}
which is non-increasing: one can therefore note $\gep^{-1}$,
its generalized inverse, defined by
$ \gep^{-1}(u):=\sup\{ x,\, \gep(x)\geq u \} $.
\end{definition}

We stress that $\bbP(\xi_1\geq n) \stackrel{n\to\infty}{=} o(1/n)$ since $\bbP(\xi_1\geq n)$ is
decreasing and
summable (because $\bbE[\xi_1]<\infty$).

\begin{proposition}
 \label{prop:boundsF}

{\bf 1. Lower bound.}
If $\lim_{x\to\infty} \gep(x) = 0$ (infinite disorder),
then $\gep^{-1}$ is defined on a neighborhood of $0$, and one has
$\lim_{u\to 0} \gep^{-1}(u)\! =\!+\infty$.
Then there exist two constants $c_0,c_0'>0$ (that do not depend on $\gb$), such that for all $u\in(0,1)$ and all $\gb>0$ one has
\begin{equation}
\label{lowboundF1}
 \tf(\gb,-\gb+u)\geq  c_0' A_u\bbP(\xi_1\geq A_u)\,  \tf(u) ,
\end{equation} 
where we defined $A_u:=\gep^{-1}(c_0 \tf(u))$, that goes to infinity
as $u$ goes to $0$.
In particular one has $h_c(\gb)=-\gb$, and the r.h.s. of \eqref{lowboundF1}
is $o(\tf(u))$ when $u$ goes to $0$ (since $A_u\bbP(\xi_1\geq A_u)$ goes to $0$).

\smallskip
{\bf 2. Upper bounds.} Whether $h_c(\gb)=-\gb$ or not, one also has
the following upper bound on the free energy:
there exist constants $C_1,c_1,\mathbf{c}>0$ (that do not depend on $\gb$),
such that for all $\gb\in(0,1) $and $u\in(0,c_1\gb)$ one has

\begin{equation}
\label{upboundF}
 \tf(\gb,-\gb+u)\leq
C_1  \gb^{1-\nu^{\rm pur}} \tf(u) \bbE\left[\frac{ T_{L(u)} }{L(u)}  
 \ind_{\left\{T_{L(u)}> 4\bbE[T_2]  L(u)\right\}}\right],
\end{equation}
where $L(u):=\lfloor\mathbf{c} \gb^{\nu^{\rm pur}} \tf(u)^{-1} \rfloor$.

One has another bound, easier to handle: for all $u\in(0,c_1\gb)$, one has
\begin{equation}
\label{simpleboundF}
 \tf(\gb,-\gb+u)\leq
C_1 u \bbE\left[\xi_1 \ind_{\left\{\xi_1>\mathbf{c} \gb u^{-1 }\right\}}\right].
\end{equation}

\end{proposition}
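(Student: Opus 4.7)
I would prove the three bounds by distinct arguments: a restriction to rare favourable regions for the lower bound, and coarse-graining estimates for the upper bounds.

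\smallskip
\noindent\textbf{Lower bound \eqref{lowboundF1}.} The plan is to restrict the renewal $\tau$ so that $\cT_k(A)\in\tau$ for every $k\geq1$ (with $A$ to be chosen), otherwise free on each $A$-block, and making a single renewal jump from $\cT_{k-1}(A)$ to the beginning of the $k$-th $A$-block. Since $\go\equiv+1$ on each $A$-block (of length $\geq A$),
\begin{equation*}
Z_{N}^{\go,\gb}\;\geq\;\prod_{k=1}^{K_N}\K\bigl(\cT_k(A)-A-\cT_{k-1}(A)\bigr)\cdot Z^{\rm pur}_{A,u},
\end{equation*}
where $K_N$ counts the $A$-blocks in $[0,N]$. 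Taking $\log$, dividing by $N$, letting $N\to\infty$, and applying Birkhoff's ergodic theorem to the stationary sequence $(\cT_k(A)-\cT_{k-1}(A))_k$, together with the estimates $\log Z^{\rm pur}_{A,u}\geq A\tf(u)-O(1)$, $\log\K(n)\geq-(1+\ga)\log n-O(1)$, Jensen's inequality, and \eqref{eq:T1A}, gives
\begin{equation*}
\tf(\gb,-\gb+u)\;\geq\;\frac{\bbP(\xi_1\geq A)}{\bbE[T_2]}\Bigl(A\tf(u)-(1+\ga)\log\bigl(\bbP(\xi_1\geq A)^{-1}\bigr)-O(1)\Bigr).
\end{equation*}
Choosing $A$ of order $A_u=\gep^{-1}(c_0\tf(u))$ and exploiting the definition of $\gep$ to ensure $\log\bigl(\bbP(\xi_1\geq A)^{-1}\bigr)\leq Ac_0\tf(u)$, then picking $c_0$ small enough (e.g.\ $(1+\ga)c_0<1/2$), makes the bracket $\gtrsim A_u\tf(u)$, which yields \eqref{lowboundF1}. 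The $o(\tf(u))$ remark follows from $A_u\bbP(\xi_1\geq A_u)\to0$, a consequence of $\bbP(\xi_1\geq n)=o(1/n)$.

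\smallskip
\noindent\textbf{Upper bound \eqref{upboundF}.} This is the main difficulty. The plan is to coarse-grain into meta-blocks of $L=L(u)$ consecutive elementary segments, the $j$-th meta-block occupying $(T_{(j-1)L},T_{jL}]$ with random length $R_j:=T_{jL}-T_{(j-1)L}$. On \emph{atypical} meta-blocks ($R_j>4\bbE[T_2]L$) I would use the pointwise pure-system bound $\log Z^{\go,\gb}_{R_j}\leq R_j\tf(u)+O(1)$ obtained by replacing the reward $h+\gb\go_n$ by its upper bound $u$. On \emph{typical} meta-blocks ($R_j\leq4\bbE[T_2]L$), the attractive sub-segments inside the meta-block all have length at most $\sim L\asymp\gb^{\nu^{\rm pur}}\tf(u)^{-1}$, which is the pure correlation length at parameter $u$; a convolution/Markov-chain argument on the excursion structure---bounding each $+1$ sub-segment by a pure-pinning estimate, and using $h+\gb\go_n=u-2\gb\leq0$ on each $-1$ sub-segment---then yields a contribution of $O(1)$ to $\log Z$. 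Averaging the two bounds via the ergodic theorem and dividing by $\bbE[R_1]\asymp L\bbE[T_2]$ produces \eqref{upboundF}; the prefactor $\gb^{1-\nu^{\rm pur}}\tf(u)$ emerges upon substituting $L=\mathbf{c}\gb^{\nu^{\rm pur}}\tf(u)^{-1}$. The hardest step is the typical-case estimate: the choice of $L$ is precisely what makes the entropic cost of entering the localised state balance the energetic gain on a typical sub-segment.

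\smallskip
\noindent\textbf{Simpler bound \eqref{simpleboundF}.} This should follow from \eqref{upboundF} by a one-big-jump estimate. Since the stationary sequence $(\xi_i)$ has finite mean, the deviation $T_L>4\bbE[T_2]L$ is (up to constants) dominated by a single atypically large $\xi_i$, so
\begin{equation*}
\bbE\!\left[\frac{T_L}{L}\,\ind_{\{T_L>4\bbE[T_2]L\}}\right]\leq C\,\bbE\bigl[\xi_1\ind_{\{\xi_1>cL\}}\bigr].
\end{equation*}
Combining this with $L\geq c\gb u^{-1}$ and $\gb^{1-\nu^{\rm pur}}\tf(u)\leq Cu$ (both following from $\tf(u)\sim u^{\nu^{\rm pur}}$ and $u\leq c_1\gb$) yields \eqref{simpleboundF}.
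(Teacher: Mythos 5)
Your lower-bound argument is correct and is essentially the paper's own proof: restricting $\tau$ to jump directly onto each successive $A$-block is exactly the localization strategy of Section \ref{sec:signlow} (there organized via super-additivity of the pinned partition function and the ergodic theorem), and the optimization $A=A_u=\gep^{-1}(c_0\tf(u))$ with $c_0$ small, combined with Jensen's inequality and \eqref{eq:T1A}, is the same.

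The upper bounds contain a genuine gap. For \eqref{upboundF}, on a typical meta-block ($T_{L}\leq 4\bbE[T_2]L$) you claim a contribution of ``$O(1)$ to $\log Z$'', obtained by pure-pinning estimates on the $+1$ sub-segments and by merely using that the reward is $\leq 0$ on the $-1$ sub-segments. This is not enough: a positive $O(1)$ contribution per typical meta-block, divided by the meta-block length $\asymp L(u)\asymp \gb^{\nu^{\rm pur}}\tf(u)^{-1}$, already yields a term of order $\tf(u)$ in the free energy, which swamps the right-hand side of \eqref{upboundF} (whose whole point is the vanishing factor $\bbE\big[\frac{T_L}{L}\ind_{\{T_L>4\bbE[T_2]L\}}\big]$) and would not even give Theorem \ref{thm:stronglyrel}. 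One needs typical meta-blocks to contribute at most $1$ multiplicatively, i.e.\ $\leq 0$ in the exponent, and simply discarding negative rewards can never push a partition function below $1$. The paper's Lemma \ref{lemZbloc2} achieves this by quantifying the probability that the renewal actually \emph{hits} the repulsive set $\cS$ of $-1$'s, whose density in a typical meta-block is bounded below: Lemma \ref{lememprunte} gives $\bP(\tau\cap\cS\neq\emptyset)\geq c\varphi^{1+\ga}$, each hit costs a factor $e^{-2\gb}$, and the constant $\mathbf{c}$ in $L(u)$ is then chosen small enough that this loss strictly dominates the pure-model gain $\mathtt{c}\gb$. That quantitative balancing is the heart of the proof and is missing from your sketch.

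Separately, your derivation of \eqref{simpleboundF} from \eqref{upboundF} via the one-big-jump inequality $\bbE\big[\frac{T_L}{L}\ind_{\{T_L>4\bbE[T_2]L\}}\big]\leq C\,\bbE\big[\xi_1\ind_{\{\xi_1>cL\}}\big]$ is not justified: the $\xi_i$ are only stationary and ergodic, and a strongly correlated environment can make $T_L$ atypically large through many simultaneous, moderately large $\xi_i$'s, so that the left-hand side stays positive while the right-hand side vanishes. The paper proves \eqref{simpleboundF} independently, using the coarser decomposition $\mathbf{T}_k=T_{2k}$ and the per-block dichotomy of Lemma \ref{lemZbloc} (contribution $\leq 1$ if $\xi_1\leq\mathbf{c}_1\gb u^{-1}$, and $\leq e^{u\xi_1}$ otherwise), which again rests on forcing the renewal to pay for the repulsive stretch rather than on dropping negative terms.
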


This Proposition gives simple and fruitful bounds and can
be  applied to many types of environment.
We give possible applications in the sequel, in particular
Theorem \ref{thm:signgauss}.
It is also used to prove Theorem \ref{thm:stronglyrel} (see Section \ref{sec:commentbounds}).

To prove the lower bound  (see Section \ref{sec:signlow}), we use a localization strategy. We focus on the contribution of the trajectories
that target an attractive region of size $A=A_u:= \gep^{-1}(c_0 \tf(u))$.
The energetic reward one gets on this region is $e^{\tf(u) A_u}$, and the entropic cost is
approximatively $\K\left(\bbP(\xi_1\geq A_u)\right)\approx e^{ - c A_u \gep(A_u)}$
(the first segment of size $A_u$ being at mean
distance $\bbP(\xi_1\geq A_u)^{-1}\approx e^{-A_u \gep(A_u)}$ from the origin).
Then our (optimized) choice of $A_u$ gives that $\tf(u) A_u > - c A_u \gep(A_u)$.
Targeting the first region where the energetic gain
overcomes the entropic cost of doing such a long jump
is therefore a good localization strategy.
The intuition that these are the only strategies contributing to the free energy
is confirmed by the 
(partly heuristic) reasoning of Section~\ref{sec:conj}, thanks to
a multiscale coarse-graining argument.
When using a simpler coarse-graining procedure,
we get the upper bound \eqref{upboundF}, see Section \ref{sec:upbound}.

\subsection{Case of the Gaussian signs environment}
\label{sec:envirsigngauss}

Let $\W:=(\W_n)_{n\geq 0}$ be a centered normalized stationary Gaussian process whose law is denoted $\bbP$, and
with correlation function $\rho_n:=\bbE[\go_i \go_{i+n}]$ (it does not depend on $i$ because of the stationarity), $\rho_0=1$.
We also assume that correlations are non-negative and are power-law decaying: there exist some $ a>0$ and some constant $c>0$ such that the correlation function $(\rho_n)_{n\geq0}$ verifies
\begin{equation}
 \rho_{k}\stackrel{k\to\infty}{ \sim} c  k^{- a}, \quad \text{ and } \rho_k\geq0 \text{ for all } k\geq 0.
\label{assumpgauss}
\end{equation}

It is natural from the Gaussian sequence $\{\W_n\}_{n\in\bbN}$, to define the environment
$\go$ with values in $\{-1,1\}$ by
\begin{equation}
 \label{envirsign}
\go_i:=\ind_{\{\W_i\geq0\}}- \ind_{\{\W_i< 0\}}.
\end{equation} 
We refer to this choice as the \emph{Gaussian signs} environment.
The sequence $\W$ is ergodic since $|\rho_k|\stackrel{k\to\infty}{\to} 0$ (see \cite[Ch.14 \S2, Th.2]{Cornfeld}). Therefore, $\go$ is also ergodic,
so that $\tf(\gb,h)$ exists, and one can apply Theorem \ref{thm:criterion} and Proposition \ref{prop:boundsF}.
We do that in Section \ref{sec:propgauss}, proving the following result.

\begin{theorem}
\label{thm:signgauss}
For the Gaussian signs environment defined above, with Assumption \eqref{assumpgauss}, one has:

$\bullet$ If $ a<1$, then $h_c(\gb)=-\gb$ for all $\gb>0$.
There exists some constant $c_2>0$, such that for all $u\in(0,1)$ and $\gb>0$ one has
\begin{equation}
\label{infFgauss}
\tf(\gb,-\gb+u)\geq \exp\left( - c_2  |\log u|^{1/(1- a)} \tf(u)^{- a/(1- a)}\right).
\end{equation}
There exist constants $c_3,c_3'>0$, such that
for all $\gb\in(0,1)$ and $u\in(0,c_3\gb)$ one has
\begin{equation}
\label{supFgauss}
 \tf(\gb,-\gb+u)\leq \exp\left( -c_3' u^{- a}  \right)
\end{equation} 

$\bullet$ If $ a> 1$, one has some $c_{\gb}>0$ such that
 $\tf^{\a}(\gb,h)\leq \tf(h+(1-c_{\gb})\gb)$ for all $\gb>0$, and in particular
$h_c(\gb)\geq h_c^{\a}(\gb) >-\gb$.

\smallskip
The previous bounds give a threshold leading from $h_c(\gb)>-\gb$ for $a>1$ to $h_c(\gb)=-\gb$
for $a<1$.
Moreover, if one has $\rho_{k+1}\leq \rho_k $ for all $k\geq 0$,
then one has the following criterion:
\begin{equation}
\label{eq:critergauss}
 h_c(\gb)=-\gb \ \  \Longleftrightarrow
  \ \  \liminf_{n\to\infty} -\frac{1}{n} \log \bbP(\W_1\geq 0, \dots, \W_n\geq 0) = 0 \quad \textrm{(infinite disorder)}.
\end{equation}
\end{theorem}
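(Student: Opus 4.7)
The strategy is to reduce everything to Gaussian persistence estimates on the sequence $\W$ and then feed them into the general framework. Two estimates drive the analysis, to be proved in the Appendix: (a) for $a<1$, the sub-exponential persistence bound $-\log\bbP(\W_1\geq 0,\ldots,\W_n\geq 0)\leq Cn^{a}\log n$, obtained by a Slepian comparison with an explicit reference covariance combined with a direct Gaussian variance computation; and (b) for $a>1$, the uniform subset bound $\bbP(\W_{i_1}\geq 0,\ldots,\W_{i_n}\geq 0)\leq Ce^{-cn}$ valid for \emph{every} sequence $i_1<\cdots<i_n$, obtained by iterative conditioning on well-separated blocks, with conditional variances controlled via the summability $\sum \rho_k<+\infty$ and the conditional-variance formula.

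For the case $a<1$, estimate (a) gives $\liminf_n -\frac{1}{n}\log\bbP(\Fav_{\llbracket 1,n\rrbracket})=0$, so Theorem \ref{thm:criticpoint}(1) yields $h_c(\gb)=-\gb$ for every $\gb>0$. The lower bound \eqref{infFgauss} then follows from Proposition \ref{prop:boundsF}(1): the comparable tail estimate $-\log\bbP(\xi_1\geq n)\leq Cn^{a}\log n$ gives $\gep(x)\asymp x^{a-1}\log x$ for large $x$, hence $\gep^{-1}(u)\asymp u^{-1/(1-a)}|\log u|^{1/(1-a)}$; plugging $A_u=\gep^{-1}(c_0\tf(u))$ into $c_0' A_u\bbP(\xi_1\geq A_u)\tf(u)$ and using $|\log\tf(u)|\asymp|\log u|$ produces exactly the exponent $-c_2|\log u|^{1/(1-a)}\tf(u)^{-a/(1-a)}$ after absorbing the polynomial prefactor. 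The upper bound \eqref{supFgauss} follows from \eqref{simpleboundF}: bounding $\bbE[\xi_1\ind_{\xi_1>\mathbf{c}\gb u^{-1}}]$ by integration of the tail $\bbP(\xi_1\geq t)\leq e^{-c't^{a}}$ yields $\exp(-c_3' u^{-a})$ for $u\in(0,c_3\gb)$.

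For the case $a>1$, estimate (b) combined with Theorem \ref{thm:criticpoint}(2) immediately gives $h_c(\gb)\geq -(1-c_\gb)\gb$. The stronger annealed bound $\tf^{\a}(\gb,h)\leq \tf(h+(1-c_\gb)\gb)$ is obtained within the annealed framework of Section \ref{sec:annealed}: the exponential decoupling in (b) forces the Gaussian signs to behave, at the level of the annealed partition function, essentially as in the i.i.d.\ case, where the per-contact weight is $\cosh\gb \leq e^{(1-c_\gb)\gb}$ for small $\gb$; the correction $c_\gb>0$ absorbs the residual correlations via (b). Finally, when $\rho_{k+1}\leq\rho_k$ for all $k$, Assumption \ref{hyp1} follows from Slepian's lemma applied to the Gaussian vector indexed by $\cE_1\cup\theta^k\cE_2$: the within-block covariances are unchanged under $k\mapsto k+1$, while each cross-covariance between $\cE_1$ and $\theta^k\cE_2$ is of the form $\rho_{j-i+k}$ for $i\in\cE_1$, $j\in\cE_2$, which is non-increasing in $k$; hence $\bbP(\Fav_{\cE_1\cup\theta^k\cE_2})$ is non-increasing in $k$, and Theorem \ref{thm:criterion} yields the criterion \eqref{eq:critergauss}.

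The main technical obstacle is the uniform exponential bound (b) for $a>1$, which must hold over \emph{arbitrary} index configurations $i_1<\cdots<i_n$. A bare Slepian comparison with an i.i.d.\ Gaussian reference is insufficient because short-range cross-correlations are non-negligible; the proof must instead proceed inductively, partitioning $\{i_1,\ldots,i_n\}$ into well-separated sub-blocks where the conditional Gaussian law is close to independent, and chaining together block-level estimates using the summability of $\rho$. The sub-exponential bound (a) for $a<1$ is conceptually gentler but still demands sharp control of the logarithmic correction in order to match precisely the exponent appearing in \eqref{infFgauss}.
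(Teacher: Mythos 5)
Your overall architecture is exactly the paper's: establish the Gaussian persistence estimates of Proposition \ref{app:gauss} (lower bound $e^{-cn^{a}\log n}$ for $a<1$, uniform-over-subsequences upper bounds $e^{-c'n^{a\wedge 1}}$), feed them into Proposition \ref{prop:boundsF} and Theorem \ref{thm:criticpoint} exactly as you describe (your exponent bookkeeping for \eqref{infFgauss}, using $a/(1-a)+1=1/(1-a)$, matches the paper's; note only the upper bound on $\gep^{-1}$ is needed, not the two-sided ``$\asymp$'' you assert), and verify Assumption \ref{hyp1} from $\rho_{k+1}\leq\rho_k$ by a Gaussian comparison inequality. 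Your use of Slepian's lemma for the latter is equivalent to the paper's route via Piterbarg's interpolation formula with smoothed indicators, and for $a>1$ the paper likewise passes through the annealed computation (Proposition \ref{prop:localann}, whose hypothesis is precisely your estimate (b)); your ``$\cosh\gb$'' heuristic is not the actual per-contact weight, which the binomial expansion shows to be $1+e^{-c_0}(e^{\gb}-1)=e^{(1-c_{\gb})\gb}$, but this is cosmetic.

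The one place where your route genuinely diverges and is shakier is the proof of the uniform subset bound (b). The paper does \emph{not} condition iteratively on the positivity event: it thins the index set to a well-separated subsequence, and then compares the law of the thinned Gaussian vector to an i.i.d.\ standard Gaussian by Cauchy--Schwarz on the Radon--Nikodym derivative, reducing everything to the bound $\det(I-V^2)\geq(3/4)^m$, which follows from a row-sum estimate on the covariance matrix (using $\sum\rho_{pA}$ small for $A$ large). Your ``iterative conditioning'' sketch runs into a real difficulty: conditioning on $\{\W_{k_1}\geq 0,\ldots,\W_{k_{j-1}}\geq 0\}$ destroys Gaussianity, and the conditional mean of $\W_{k_j}$ is a linear form in the previous coordinates, which are unbounded above on that event; so bounding each conditional factor by $1-\delta$ requires additional truncation arguments that you do not indicate and that are not obviously compatible with the target $e^{-cm}$. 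Since the conclusion of (b) is true and you correctly identify it as the key input, this is a repairable soft spot in an auxiliary lemma rather than a flaw in the main argument, but as written that step would not go through.
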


Note that in the Gaussian signs environment, if the correlation decay exponent is $a<1$, then the phase transition is of infinite order.
This stresses that disorder is strongly relevant in that case, in a substantial way.  It is the first example we are aware of in the pinning model framework where the presence of disorder makes the phase transition of infinite order (leaving aside the case $\ga=0$ where one already has that $\nu^{\rm pur}=\infty$).

\section{General comments on the results}
\label{sec:comments}

\subsection{Reduction to a \{-1,1\}-valued environment}
\label{sec:generalize}

Our choice of a $\{-1,1\}$-valued environment is made essentially to 
simplify notations, and to restrict the question on the critical point
to decide whether $h_c(\gb)=-\gb$ or $h_c(\gb)>-\gb$.
We now explain why this choice is actually not restrictive, and how general
environments can be treated with the
techniques used in this paper.

To a general bounded sequence $\go$,
we associate a binary sequence, that keeps track of
the distribution of "favorable" and "unfavorable" regions.
We note $\mathbf{M}:={\rm ess\, sup} (\go_1)$ ($\mathbf{M}<+\infty$),
we take $\eta$ small, and consider $I_{\eta}=[\mathbf{M}-\eta, \mathbf{M}]$ a neighborhood of $\mathbf{M}$. 
We now define the sequence
$w^{(\eta)}=(w_{i}^{(\eta)})_{i\in\bbN}\in\{-1,1\}^{\bbN}$ the following way:
\begin{equation}
w_i^{(\eta)}:= \ind_{\{\go\in I_{\eta}\}} -\ind_{\{\go_i\notin I_{\eta}\}}.
\end{equation}
Then all the results presented can be dealt via the sequence $w^{(\eta)}$,
studying its properties for arbitrary $\eta$.
For example, one can translate Theorem \ref{thm:criterion}
into the following assertion:

\smallskip
If the sequence $w^{(\eta)}$ verifies Assumption \ref{hyp1}
for every (small) $\eta>0$,
then one has
\begin{multline}
h_c(\gb)=-\mathbf{M}\gb \ \text{ for all } \gb>0 \\
 \Leftrightarrow \text{ for all } \eta>0,\ \liminf_{n\to\infty} -\frac{1}{n}
    \log \bbP\big( w_i^{(\eta)}=1 \ \text{ for all } 1\leq i\leq n\big) = 0.
\end{multline}

One also remarks that
$\go_n\leq \mathbf{M}-\eta/2 + \frac{\eta}{2} w_n^{(\eta)}$,
so that $Z_{N,h}^{\go,\gb} \leq Z_{N,h+\gb \mathbf{M}-\gb\eta/2}^{w^{(\eta)},\gb\eta/2}$.
The upper bounds one gets on the free energy in Proposition \ref{prop:boundsF}
can therefore also be generalized. In particular, one obtains that whenever $h_c(\gb)=-\mathbf{M}\gb$, disorder
is relevant for any value of $\nu^{\rm pur}$.
For the lower bounds, one simply uses the bound
$\go_n\geq (\mathbf{M}-\eta)\ind_{\{w_n^{\eta}=+1\}}$,
since only favorable regions are used in the proofs of our results.

\subsection{Discussion on the criterion in Theorem \ref{thm:criticpoint}}
\label{sec:comments1}

\subsubsection{Characterizations of infinite disorder}
\begin{lemma}
\label{lem:charactstrong}
The following conditions are equivalent, and all characterize infinite disorder: 

\vskip.05cm
{\rm (a)} \hskip.1cm $\liminf_{n\to\infty}-\frac{1}{n}\log \bbP(\Fav_{\llbracket 1,n\rrbracket})= 0$ ;
\vskip.05cm

{\rm (b)} \hskip.1cm  $\liminf_{n\to\infty}
 \frac{1}{n}\log \cT_1(n) =0$ $\bbP$-a.s. ;
 \vskip.05cm

{\rm (c)} \hskip.1cm 
  for all $\gd>0$, $A_{\gd}(\go)<+\infty$ $\bbP$-a.s.,
where we define for any $\gd>0$
\begin{equation}
\label{defAgo}
A_{\gd}^{(\go)} : = \inf \left\{A: \ \log \cT_1(A) <\gd A \right\}.
\end{equation}
\end{lemma}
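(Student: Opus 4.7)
The argument rests on the identity $\bbE[\cT_1(A)] = \bbE[T_2]\,\bbP(\Fav_{\llbracket 1,A\rrbracket})^{-1}$ from equation (2.7), coupled with Markov's inequality and Borel--Cantelli in both directions, and on the stationarity-based union bound
\begin{equation*}
\bbP(\cT_1(A)\leq M) \leq M\,\bbP(\Fav_{\llbracket 1,A\rrbracket}),
\end{equation*}
which holds because the event $\{\cT_1(A)\leq M\}$ is contained in $\union_{j=0}^{M-A}\theta^{j}\Fav_{\llbracket 1,A\rrbracket}$. I would prove the cycle (a)$\Rightarrow$(b)$\Rightarrow$(c)$\Rightarrow$(a).

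For (a)$\Rightarrow$(b), extract from (a) a subsequence $(n_k)$ along which $\bbP(\Fav_{\llbracket 1,n_k\rrbracket})\geq e^{-\gep_k n_k}$ with $\gep_k\to 0$, and (by passing to a further subsequence) with $n_k\geq k^2$. Then Markov gives $\bbP(\cT_1(n_k)>e^{2\gep_k n_k})\leq \bbE[T_2]\,e^{-\gep_k n_k}$, which is summable in $k$, so Borel--Cantelli yields $\log \cT_1(n_k)/n_k\leq 2\gep_k\to 0$ a.s., hence (b). For (b)$\Rightarrow$(c), the implication is immediate from the definition of $A_\gd$: given (b), for every $\gd>0$ and a.e.\ $\go$ some $n$ satisfies $\log \cT_1(n)<\gd n$, so $A_\gd\leq n<\infty$.

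For (c)$\Rightarrow$(a), I would argue the contrapositive: if (a) fails, then there exists $c>0$ with $\bbP(\Fav_{\llbracket 1,n\rrbracket})\leq e^{-cn}$ for all $n\geq n_0$. Applying the union bound with $M=e^{cA/2}$ gives $\bbP(\cT_1(A)\leq e^{cA/2})\leq e^{-cA/2}$, summable in integer $A$, and Borel--Cantelli yields $\cT_1(A)>e^{cA/2}$ for all $A$ large enough a.s. Thus for any $\gd<c/4$, the set $\{A: \log \cT_1(A)<\gd A\}$ is a.s.\ bounded, forcing its infimum $A_\gd$ to be attained (if nonempty) at some finite $A$. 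To rule this out for small enough $\gd$, I would note that the local minima of $A\mapsto \log\cT_1(A)/A$ are attained precisely at the running maxima $M_j^\ast:=\max_{i\leq j}\xi_{2i-1}$ of the attractive block sizes, with values $\log T_{2j-1}/M_j^\ast$, and that by ergodicity of the sequence of block sizes this infimum equals the $\liminf_{j\to\infty}$ of the same quantity. Since Borel--Cantelli has already forced this liminf to be $\geq c/2>0$, the set is eventually empty for $\gd$ small, contradicting $A_\gd<\infty$.

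The main obstacle is the (c)$\Rightarrow$(a) step, since (c) as stated formally only asks that at least one $A$ satisfy $\log\cT_1(A)<\gd A$, whereas one intuitively wants arbitrarily large such $A$. The argument above circumvents this by combining the explicit structural description of the sawtooth profile of $A\mapsto \log \cT_1(A)/A$ with an ergodic identification of its infimum with its $\liminf$, which is the technical heart of the lemma.
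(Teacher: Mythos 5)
Your steps (a)$\Rightarrow$(b) and (b)$\Rightarrow$(c) are essentially correct. For (a)$\Rightarrow$(b) you use Markov plus Borel--Cantelli along a sparse subsequence where the paper uses Jensen plus Fatou applied to \eqref{eq:T1A}; both work, but note that your summability claim $\sum_k e^{-\gep_k n_k}<\infty$ is not automatic from $\gep_k\to0$ alone — it needs the observation that $e^{-\gep_k n_k}\leq\bbP(\xi_1\geq n_k)=o(1/n_k)=o(k^{-2})$, which follows from $\bbE[\xi_1]<\infty$ (Remark \ref{rem:xifini}); you should make that explicit since it is the only reason the condition $n_k\geq k^2$ buys you anything.

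The genuine gap is in (c)$\Rightarrow$(a). You correctly isolate the difficulty (boundedness of $\{A:\log\cT_1(A)<\gd A\}$ does not give emptiness), but your proposed resolution — that ``by ergodicity of the sequence of block sizes'' the infimum of $A\mapsto\frac1A\log\cT_1(A)$ equals its $\liminf$ — is false. The sequence of local minima $\log T_{2j_r-1}/M^*_{j_r}$ indexed by the record times $j_r$ is not stationary (it involves the growing quantity $T_{2j_r-1}$), and the infimum of a sequence is in general strictly smaller than its liminf: for instance, on the positive-probability event that $\xi_1$ is enormous, the very first local minimum $\log\xi_1/\xi_1$ is tiny even though all later ones may exceed $c/2$, so for a fixed moderate $\gd$ one has $A_\gd<\infty$ there. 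No ergodic theorem rescues this, because the putative limit would have to be a deterministic constant while $\inf_A\frac1A\log\cT_1(A)$ genuinely depends on the beginning of $\go$. The repair is to notice that you are proving too much: to negate (c) you only need \emph{one} $\gd>0$ with $\bbP(A_\gd^{(\go)}=+\infty)>0$, not a.s.\ emptiness for a deterministic $\gd$. Your Borel--Cantelli bound gives $\frac1A\log\cT_1(A)\geq c/2$ for all $A\geq A_0(\go)$ with $A_0(\go)<\infty$ a.s., and the deterministic bound $\cT_1(A)\geq A$ gives $\frac1A\log\cT_1(A)\geq\frac{\log A_0(\go)}{A_0(\go)}>0$ for $2\leq A<A_0(\go)$; hence the random variable $\inf_A\frac1A\log\cT_1(A)$ is a.s.\ positive (on $\{\xi_1\geq2\}$, say), and therefore exceeds some fixed $\gd>0$ with positive probability, which is exactly $\bbP(A_\gd^{(\go)}=\infty)>0$. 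Alternatively, and this is the paper's route, close the cycle as (c)$\Rightarrow$(b) (immediate, since for $\gd$ small the witnessing $A$'s are forced to be large) and then $\neg$(a)$\Rightarrow\neg$(b) — the latter is precisely the union-bound/Borel--Cantelli computation you already wrote, aimed at the liminf rather than the infimum, where it suffices as it stands.
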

\begin{proof}
Jensen's inequality gives that $\bbE[\log \cT_1(n)]\leq \log \bbE[\cT_1(n)]$,
which together with \eqref{eq:T1A} shows that {\rm (a)} implies
$\liminf_{n\to\infty}\bbE\left[
 \frac{1}{n}\log \cT_1(n)\right] =0$.
Then, a direct application of Fatou's Lemma
gives $\bbE\left[\liminf_{n\to\infty}
 \frac{1}{n}\log \cT_1(n)\right] =0$, and {\rm (b)}.
Using the definition of $A_{\gd}(\go)$, {\rm (b)} then directly implies that, for every fixed $\gd>0$,
$A_{\gd}(\go)<+\infty$ $\bbP$-a.s.

On the other hand, if $A_{\gd}(\go)<+\infty$ $\bbP$-a.s. for all $\gd>0$, then
it means that for any $p\in\bbN$,
there exists $\bbP$-a.s.\ some finite $A$ such that $\frac1A \log \cT_1(A)\leq 1/p$ .
It shows that $\liminf_{A\to\infty} \frac1A \log \cT_1(A) \leq 1/p$ $\bbP$-a.s.\ for any $p\in\bbN$,
which gives $\liminf_{A\to\infty} \frac1A \log \cT_1(A) =0 $  $\bbP$-a.s.

We are left to show that {\rm (b)} implies {\rm (a)}.
We actually prove a stronger statement:
\begin{equation}
\liminf_{n\to\infty}-\frac{1}{n}\log \bbP(\Fav_{\llbracket 1,n\rrbracket})>0
\ \ \Rightarrow \ \ 
\liminf_{n\to\infty}
 \frac{1}{n}\log \cT_1(n) >0 \ \ \bbP-a.s. 
\end{equation}
Indeed, if $ \bbP(\Fav_{\llbracket 1, n\rrbracket})=\bbP(\xi_1\geq n)\leq e^{-\gd n}$ for some $\gd>0$, then
one has that
\begin{equation}
\label{borelcant}
 \bbP(\cT_1(n)\leq  e^{\gd n/2})
   \leq \bbP \big( \exists k\leq  e^{\gd n/2} ,\ \xi_{2k+1}\geq n\big)\\
  \leq e^{\gd n/2} \bbP(\xi_1\geq n) \leq e^{-\gd n/2}, 
\end{equation}
where we first used that there are at most $\cT_1(n)$ segments $(T_{2k},T_{2k+1}]$
in a system of size $\cT_1(n)$, and then a union bound. From \eqref{borelcant} and using Borel-Cantelli Lemma,
one gets that $\bbP$-a.s., $\frac{1}{n} \log \cT_1(n) \leq \gd/2$ happens only
a finite number of time, meaning that $\liminf \frac{1}{n}\log \cT_1(n)>\gd/2$.
\end{proof}

\subsubsection{Conjecture on the sharp general criterion}

Let us now comment on the natural localization strategy that trajectories should
adopt when $h=-\gb+u$,
with $u$ small.
Considering the trajectories that aim directly at the first $A$-{\sl block},
one realizes that the energetic gain collected on this $A$-{\sl block} is
$A\tf(u)$, and that the
entropic cost of targeting it is $(1+\ga)\log \cT_1(A)$.
Then, looking back at the definition \eqref{defAgo}, one has that
 $\cT_1(A_{\tf(u)}^{(\go)}/(1+\ga))$ is the first time when the energetic gain overcomes the entropic cost.
It is therefore a natural conjecture to guess that,
if for some $u_0$ one has $A_{\tf(u_0)/(1+\ga)}^{(\go)}=+\infty$, then
the entropic cost will never be energetically compensated and one will be in the delocalized phase:
$h_c(\gb)\geq -\gb+u_0$.
We then can formulate a guess, that we justify in more details (but still to some extent heuristically) in Section \ref{sec:conj}.
\begin{conjecture}
\label{conj}
One has
the criterion
\begin{multline}
h_c(\gb)=-\gb  \ \text{for all } \gb>0\ \ \Leftrightarrow \ \
\text{for all }\gd>0, \ A_{\gd}(\go)<+\infty\ \bbP\textrm{-a.s.}\\
\text{(i.e. with infinite disorder, cf. Lemma \ref{lem:charactstrong})}.
\end{multline}
\end{conjecture}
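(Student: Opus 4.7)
The direction ``infinite disorder $\Rightarrow h_c(\gb)=-\gb$'' is already Theorem \ref{thm:criticpoint}(1), so the task is to prove its converse. I would treat it by contrapositive: assuming absence of infinite disorder, i.e.\ $\liminf_{n\to\infty}-\frac1n\log\bbP(\Fav_{\llbracket 1,n\rrbracket})>0$, I aim to produce $\gb>0$ and $u_0>0$ with $\tf(\gb,-\gb+u_0)=0$. A preliminary step is to upgrade this single-interval bound to a quantitative ergodic statement on the geometry of large attractive regions: for some $\gd>0$ and every $A$ sufficiently large, $\cT_k(A)\geq (k/2)e^{\gd A}$ for all $k\geq 1$, $\bbP$-a.s.

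Granted this, the core argument implements the heuristic of Section \ref{sec:conj}. For fixed $\gb>0$, pick $u>0$ so small that $\tf(u)<\gd/(2(1+\ga))$. The naive first-order strategy of targeting the first $A$-block yields energy at most $A\tf(u)-(1+\ga)\log \cT_1(A)\leq A(\tf(u)-(1+\ga)\gd)<0$, so cannot produce a positive free energy. To rule out more complex, multi-block strategies, I would coarse-grain the system into blocks $B_j$ of length $L(u)\asymp e^{\gd A_u}$, with $A_u$ the smallest scale at which an attractive region first becomes profitable, and write
\[
 Z_{N,-\gb+u}^{\go,\gb}\leq\sum_{\cI}\prod_{j\in\cI}z_j(\go)\cdot K(\text{gaps of }\cI),
\]
where $\cI$ indexes the blocks visited by $\tau$ and $z_j(\go)$ is the intra-block partition function. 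A fractional moment $\bbE[Z_N^{\gamma}]$ with $\gamma\in(0,1)$, preceded by a change of measure that effectively decorrelates the blocks and damps the rare atypical ones, would give: a block containing no $A_u$-block contributes trivially since $h+\gb\go_n\leq 0$ there; a block containing an $A_u$-block contributes $\lesssim e^{\gamma A_u\tf(u)}$ with probability $\lesssim e^{-\gd A_u}L(u)$. By the choice of $\gd$ and $A_u$, the resulting per-block factor is strictly below $1$, hence $\bbE[Z_N^{\gamma}]$ stays bounded uniformly in $N$, yielding $\tf(\gb,-\gb+u)=0$ and therefore $h_c(\gb)\geq-\gb+u>-\gb$.

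The reason the statement remains a conjecture is precisely the obstacle in closing the multiscale part. A full argument has to iterate the above at scales $L_k=2^kL(u)$, where the relevant block size is $A_{u,k}=A_u+c\log k$, and summability of the contributions demands joint control of the positions of $A$-blocks across scales. Without Assumption \ref{hyp1}, Lemma \ref{lemFKG} is unavailable, so the probability $\bbP(\Fav_{\{i_1,\dots,i_n\}})$ for a scattered family of indices is not controlled by the single-interval tail $\bbP(\xi_1\geq n)$ alone; in principle, favorable sites could cluster in ways compatible with the assumed absence of infinite disorder yet defeating the above fractional moment estimate. Closing the argument would require an additional ergodic or mixing input---such as a suitable decay of covariances $\mathbb{C}\mathrm{ov}(\Fav_{\cE_1},\Fav_{\cE_2})$ for spatially separated index sets---strong enough to substitute for the monotonicity invoked in the proof of Theorem \ref{thm:criterion}.
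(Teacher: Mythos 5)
The statement you are addressing is a conjecture: the paper does not prove it, and offers only the partly heuristic multiscale argument of Section \ref{sec:conj}. Your proposal is likewise not a proof, and you say so explicitly; judged as a strategy sketch, its architecture is reasonable. You correctly reduce the equivalence to its open half (the other implication being Theorem \ref{thm:criticpoint}, point (1)), and your preliminary ergodic upgrade $\cT_k(A)\geq (k/2)e^{\gd A}$ is essentially the Borel--Cantelli computation \eqref{borelcant} from the proof of Lemma \ref{lem:charactstrong} combined with ergodicity of the increments of $k\mapsto\cT_k(A)$, so that step is unobjectionable.

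Your route, however, differs from the paper's in a way that matters for the obstruction. Section \ref{sec:conj} proposes a purely quenched, iterated coarse-graining in which meta-blocks are recursively classified as favorable or unfavorable, and the pathwise condition $A^{(\go)}_{\gd}=+\infty$ emerges as the statement that no scale is ever favorable, whence $\liminf_n Z^{\go,\gb}_{n,-\gb+u_0}\leq 1$. You instead pass through a fractional moment $\bbE[Z_N^{\gamma}]$ with a change of measure; this is an averaged estimate, and averaged estimates are exactly where one needs $\bbP(\Fav_{\{i_1,\ldots,i_n\}})\leq e^{-cn}$ for \emph{scattered} index sets --- the quantity that Assumption \ref{hyp1} (via Lemma \ref{lemFKG}) was introduced to control, and which already underlies the proof of Theorem \ref{thm:criterion} through the annealed bound of Proposition \ref{prop:localann}. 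Absence of infinite disorder only constrains $\bbP(\Fav_{\llbracket 1,n\rrbracket})$ for consecutive sites, so your scheme reintroduces the very hypothesis the conjecture seeks to remove; you diagnose this correctly at the end, but it means your machinery does not advance beyond the paper's own sketch, and is arguably less adapted to removing Assumption \ref{hyp1} than the quenched iteration. One local error: your claim that a block containing no $A_u$-block ``contributes trivially since $h+\gb\go_n\leq 0$ there'' is false --- on favorable sites $h+\gb\go_n=u>0$ --- and the actual reason such blocks contribute at most $1$ is the entropy-versus-energy balance of Lemma \ref{lemZbloc}, where the reward $u\xi$ on a short favorable stretch is beaten by the cost of reaching past the adjacent $-1$'s.
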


\subsection{Comments on the bounds of Proposition \ref{prop:boundsF}}
\label{sec:commentbounds}

We first stress that Proposition \ref{prop:boundsF},
and in particular the upper bound \eqref{upboundF} on the free energy,
implies Theorem \ref{thm:stronglyrel}.
Indeed, thanks to the Ergodic Theorem,
one has 
that $\frac{T_{n}}{n} \stackrel{n\to\infty}{\to} \frac12 \bbE[T_2]$ $\bbP$-a.s.
It is then standard to obtain that
$\limsup_{n\to\infty} \bbE\left[\frac{T_{n}}{n} \ind_{\{T_n \geq 4\bbE[T_2]n \}}  \right]=0$,
which directly gives that $\tf(\gb,-\gb+u) \stackrel{u\searrow0}{=} o (\tf(u))$ from \eqref{upboundF}.
Moreover, in the case $\ga>1$, where $\tf(u)\stackrel{u\searrow0}{\sim} cst. u$,
the bound \eqref{simpleboundF} gives more directly
that $\tf(\gb,-\gb+u)\stackrel{u\searrow0}{=} o (\tf(u))$.

\smallskip
One is also able to get, in many cases,
a very simple bound on the free energy, knowing only
the behavior of $\bbP(\xi_1\geq A)$. Indeed,
a small computation gives that
\begin{equation}
 \bbE\left[ \xi_1 \ind_{\{\xi_1\geq A\}} \right] 
  = \sum_{n\geq A} n\left( \bbP(\xi_1\geq n) -\bbP(\xi_1\geq n+1) \right)
  = A \bbP(\xi\geq A) + \sum_{n> A}  \bbP(\xi_1\geq n).
\end{equation}
If $\bbP(\xi_1\geq A)$ decays
sufficiently fast (essentially faster than $A^{-(1+\gep)}$, one has to treat each case cautiously),
one gets
that $ \bbE\left[ \xi_1 \ind_{\{\xi_1\geq A\}} \right]\leq cA \bbP(\xi\geq A)$.
From \eqref{simpleboundF} one obtains
\begin{equation}
\label{roughF}
 \tf(\gb,-\gb+u)\leq
C' \bbP(\xi_1\geq c u^{-1}),
\end{equation} 
which gives a very explicit (but rough) upper bound.

\smallskip


Proposition \ref{prop:boundsF}
does not give optimal upper bounds on the free energy.
We actually believe that the lower bound \eqref{lowboundF1}
gives the right order for the critical behavior (up to some corrections, such as constants in the definition of $A_u$), based on the partially heuristic reasoning of Section \ref{sec:conj}.
We mention 
that Proposition \ref{prop:boundsF} 
is easily applicable when the sizes of the elementary blocks are independent, as in \cite{BLpin}.
One actually recovers the lower bound on the free energy of \cite[Thm. 2.1]{BLpin},
and the rough bounds given in \cite[Prop. 4.1 \& Prop. 5.1]{BLpin}.
The first step of the procedure proposed in Section \ref{sec:conj} (in particular \eqref{upFstep1}),
is however needed to
obtain the full result \cite[Thm. 2.1]{BLpin}.

\subsection{Properties of the Gaussian signs environment}
\label{sec:propgauss}

The following proposition, proven in Appendix \ref{app},
estimates the probability for a Gaussian correlated vector
to be componentwise non-negative (that corresponds to $\bbP(\xi_1\geq n)$
for the Gaussian signs environment).
Together with
Proposition \ref{prop:boundsF}, it gives \eqref{infFgauss}-\eqref{supFgauss}.

\begin{proposition}
\label{app:gauss}
We make the assumption \eqref{assumpgauss} on the Gaussian sequence $\W$.

$\bullet$ If $ a<1$, 
there exist two constants 
$c,c'>0$, such that for every $n\in\N$ one has
\begin{equation}
 \bbP\left( \W_i\geq 0 \, ; \, \forall i\in\{1,\dots,n\} \right)\geq e^{-c n^{ a} \log n}.
 \label{gausslow}
\end{equation}
Moreover, for all subsequences $1\leq i_1<\ldots<i_n$, one has
\begin{equation}
 \bbP\left( \W_i\geq 0 \, ; \, \forall i\in\{i_1,\dots,i_n\} \right)\leq  e^{ -c' n^{ a}}.
 \label{gaussup1}
\end{equation}

$\bullet$ If $ a>1$, there exists some constant $c''>0$ such that
for all subsequences $1\leq i_1<\ldots<i_n$ one has
\begin{equation}
 \bbP\left( \W_i\geq 0 \, ; \, \forall i\in\{i_1,\dots,i_n\} \right)\leq  e^{ -c'' n}.
 \label{gaussup2}
\end{equation}
\end{proposition}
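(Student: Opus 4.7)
The plan is to prove the three Gaussian persistence bounds separately; the two upper bounds admit a unified moment-method proof, while the lower bound uses a different, shift-and-decouple strategy.

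For the lower bound \eqref{gausslow} with $a<1$, the key observation is that the sample mean $\bar W_n := n^{-1}\sum_{i=1}^n W_i$ has variance $n^{-2}\sum_{i,j=1}^n\rho_{|i-j|}\asymp n^{-a}$, amplified compared to the i.i.d.\ case. For $M = c_0\sqrt{\log n}$ the event $\{\bar W_n\geq M\}$ has Gaussian probability at least $\exp(-cn^a\log n)$. Conditionally on $\bar W_n = t$, each $W_i$ has conditional mean $\alpha_i t$ with regression coefficient $\alpha_i = \mathrm{Cov}(W_i,\bar W_n)/\mathrm{Var}(\bar W_n)$ bounded below by a positive constant uniformly in $i$ (this is where $a<1$ enters), and the residuals $R_i := W_i - \alpha_i\bar W_n$ form a centered Gaussian vector with non-negative covariances inherited from $\rho_k\geq 0$. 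Pitt's inequality (FKG for non-negatively correlated Gaussians) then gives
\[
\bbP\bigl(W_i\geq 0\ \forall i \,\big|\, \bar W_n = t\bigr) \geq \prod_{i=1}^n \Phi(\alpha_i t) \geq \Phi(cM)^n,
\]
which is bounded below by a positive constant for the chosen $M$. Multiplying by the cost of the shift yields \eqref{gausslow}.

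For the upper bounds \eqref{gaussup1} and \eqref{gaussup2}, I would use a moment method on the subsequence empirical mean $\bar W := n^{-1}\sum_{j=1}^n W_{i_j}$. Two ingredients are needed. First, a variance estimate: since $\rho_m \leq C'(1+m)^{-a}$, one has $\sigma_n^2 := \mathrm{Var}(\bar W) \leq C\, n^{-\min(a,1)}$ uniformly in the subsequence, the worst case being consecutive indices $i_j=j$. Second, an iterated Pitt's inequality: applied to the increasing function $W_{i_j}\mathbf{1}_{W_{i_j}\geq 0}$ and the increasing event $A := \{W_{i_k}\geq 0\ \forall k\}$, the FKG positive association of non-negatively correlated Gaussians gives $\bbE[W_{i_j}\mid A]\geq \bbE[W_{i_j}\mid W_{i_j}\geq 0] = \sqrt{2/\pi}$, hence $\bbE[\bar W\mid A]\geq c_0 := \sqrt{2/\pi}$. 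By Jensen's inequality, $\bbE[\bar W^k\mid A]\geq c_0^k$ for every $k\geq 1$, and therefore
\[
\bbP(A)\, c_0^k \;\leq\; \bbE\bigl[\bar W^k\mathbf{1}_A\bigr]\;\leq\; \bbE\bigl[(\bar W^+)^k\bigr] \;=\; \sigma_n^k\,\bbE\bigl[(Z^+)^k\bigr],
\]
with $Z$ standard Gaussian and $\bbE[(Z^+)^k]\sim (k/e)^{k/2}\sqrt{2/k}$ by Stirling. Optimizing over $k$ (optimum at $k^\ast\asymp c_0^2/\sigma_n^2$) yields $\bbP(A)\leq \exp(-c/\sigma_n^2)$, which gives \eqref{gaussup1} with exponent $n^a$ when $a<1$ and \eqref{gaussup2} with exponent $n$ when $a>1$.

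The main obstacle is justifying the iterated Pitt inequality $\bbE[\bar W\mid A]\geq \sqrt{2/\pi}$: one needs a conditional FKG argument showing that the positive association of the events $\{W_{i_k}\geq 0\}$ persists after conditioning on the single-coordinate event $\{W_{i_j}\geq 0\}$, which rests on the fact that positively correlated Gaussian laws restricted to half-spaces retain the FKG property. A secondary delicate point is the uniform variance estimate $\sigma_n^2\leq Cn^{-\min(a,1)}$: this must hold for \emph{every} subsequence, which follows from a double-sum comparison using the universal pointwise bound $\rho_m\leq C'(1+m)^{-a}$ together with the observation that, for distinct indices $i_1<\dots<i_n$, the sorted gaps $|i_j-i_k|$ (for $k\neq j$) are at least $1,2,\dots$.
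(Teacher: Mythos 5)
Your architecture (a Cameron--Martin--type shift for the lower bound, concentration of the empirical mean for the upper bounds) is sensible, but both halves hinge on a step that is not justified under hypothesis \eqref{assumpgauss}, and neither matches the paper's route: the paper obtains \eqref{gausslow} by simply invoking \cite[Lemma A.3]{Bcorrel} with $A=0$, and proves \eqref{gaussup1}--\eqref{gaussup2} by thinning the index set to $m\asymp n^{\min(a,1)}$ points separated by at least $An^{1-a}$ (resp.\ $A$), then comparing with an independent standard Gaussian vector via Cauchy--Schwarz and a determinant/eigenvalue computation --- no FKG input whatsoever.

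The first gap is in your lower bound: the residuals $R_i=\W_i-\alpha_i\bar\W_n$ do \emph{not} inherit non-negative covariances from $\rho_k\ge0$. One has $\mathbb{C}\mathrm{ov}(R_i,R_j)=\rho_{|i-j|}-c_ic_j/v$ with $c_i=\mathbb{C}\mathrm{ov}(\W_i,\bar\W_n)$ and $v=\Var(\bar\W_n)$; projecting out the sample mean creates negative correlations (in the IID case every off-diagonal entry equals $-1/n$, and for $\rho_k\sim ck^{-a}$ the extreme pairs $|i-j|\asymp n$ give $\rho_{|i-j|}\asymp cn^{-a}$ while $c_ic_j/v\asymp c\tfrac{2-a}{2(1-a)}n^{-a}$, which is strictly larger). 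So Pitt's inequality cannot be applied to the conditional law as written. The second, more serious gap is the ``iterated Pitt inequality'' $\bbE[\W_{i_j}\mid A]\ge\sqrt{2/\pi}$: Pitt's theorem yields positive association of non-negatively correlated Gaussians, but positive association is \emph{not} preserved by conditioning on the half-space $\{\W_{i_j}\ge0\}$. What survives conditioning on rectangles is the MTP$_2$/FKG-lattice property, which for Gaussians is equivalent to the inverse covariance matrix having non-positive off-diagonal entries --- a much stronger condition than $\rho_k\ge0$ that \eqref{assumpgauss} does not guarantee. You flag this as the crux, correctly: it is not a fact one can rest on. Your moment-method idea can, however, be rescued with no FKG at all: on $A$ one has $\sum_j\W_{i_j}^-=0$, whereas $\bbE\big[\sum_j\W_{i_j}^-\big]=n/\sqrt{2\pi}$, and the map $x\mapsto\sum_jx_j^-$ has $\gU$-gradient norm at most $(\mathbf{1}^{T}\gU\,\mathbf{1})^{1/2}\le Cn^{1-\min(a,1)/2}$ by your (correct) uniform double-sum estimate over subsequences; Borell--TIS then gives $\bbP(A)\le\exp(-cn^{\min(a,1)})$ directly, which is exactly \eqref{gaussup1} and \eqref{gaussup2}.
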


Let us mention that in the case $ a<1$,
for a rather particular choice of the Gaussian covariance structure,
  \cite[Th.1.1]{BDZ95} gives a much sharper result than ours.
More precisely, in the case where the covariances of $(\W_n)_{n\in\N}$ are given by the Green function
of some transient random walk on $\Z^d$ (one can
construct such a random walk in a way that $\rho_n\sim c_{ a} n^{- a}$,
with some explicit constant $c_{ a}$, see \cite{BDZ95}), one has

\begin{equation}
\label{app:gaussprecis}
 \lim_{n\to\infty} -\frac{1}{n^{a} \log n }
\log \bbP(\W_i\geq 0 \, ; \, \forall i\in\{1,\dots,n\}) = \mathsf{C}_{ a},
\end{equation} 
where the constant $\mathsf{C}_{ a}$ is explicit.
If we had an estimate like \eqref{app:gaussprecis} in Proposition \ref{app:gauss}
for $a<1$, 
we would 
get a slightly more precise upper bound in Theorem \ref{thm:signgauss}.
As we do not hunt for the sharp behavior in Theorem \ref{thm:signgauss},
we are satisfied with Proposition \ref{app:gauss} which is valid
with very little assumptions on the correlation structure.

\begin{rem}\rm
 \label{rem:xi1}
The case $ a=1$ is more problematic because it is a marginal case,
and our proofs would
adapt to this case, giving
\begin{equation}
e^{-cn/\log n}\geq \bbP\left( \W_i\geq 0 \, ; \, \forall i\in\{1,\dots,n\} \right) \geq e^{-cn}.
\end{equation}
In view of \eqref{app:gaussprecis},
and because the term $n^{ a}$ ($=\sum_{k=1}^n \rho_k$) when $ a<1$
would be replaced by $n/\log n$ if $ a=1$,
we believe that $\log \bbP\left( \W_i\geq 0 \, ; \, \forall i\in\{1,\dots,n\} \right)$
is of order $n$. When $ a=1$,
one would therefore have
a statement similar to the case $ a>1$.
The system should therefore stand in the {\sl conventional} regime where
$h_c(\gb)>-\gb$ for small $\gb$.
\end{rem}

\begin{proof}[Proof of Theorem \ref{thm:signgauss}]

If $ a<1$, Proposition \ref{app:gauss} gives that $\bbP(\xi_1\geq n)\geq e^{-c_2 n^{ a}\log n}$.
One has $ \gep(x)\leq c' x^{ a-1} \log x$, and thus there exists
a constant $c$ such that 
$\gep^{-1}(t) \leq c   (t^{-1}|\log t|)^{1/(1- a)}$.
Defining $A_u := \gep^{-1}(c_0 \tf(u))$ as in Proposition \ref{prop:boundsF},
one gets $A_u\leq c (\tf(u)^{-1}|\log u|)^{1/(1- a)}$ (recall that $\tf(u)$ is of polynomial order).
One concludes using again Proposition \ref{app:gauss}, which
gives that
\begin{equation}
\bbP(\xi_1\geq A_u) \geq \exp\left( - c |\log u|^{1/(1- a)} \tf(u)^{- a/(1- a)}  \right),
\end{equation}
that combined with \eqref{lowboundF1} brings \eqref{infFgauss}.
Moreover, Proposition \ref{app:gauss}
also implies that $\bbP\left( \xi_1\geq n\right)\leq e^{-c'_2 n^{ a}}$
so that \eqref{supFgauss} follows directly from the bound \eqref{roughF}.
In the case $ a>1$, the conclusion follows from Proposition \ref{app:gauss}, that gives the condition $(2)$ in Theorem \ref{thm:criticpoint}.

To get the criterion \eqref{eq:critergauss} we have to show that the
Gaussian signs environment satisfies Assumption \ref{hyp1} if the correlation function is non-increasing, to then be able to use Theorem \ref{thm:criterion}.
Indeed, let us consider two sets of indices $\cE_1$
and $\cE_2$, with $\max \cE_1 < \min \cE_2$.
We notice that, if $\rho_{k+1}\leq \rho_k$ for all $k\geq 0$, then the covariances
of the Gaussian vector $X=(\W_i)_{i\in \cE_1\cup\, \theta^{k+1} \cE_2}$ are all smaller
or equal than those of 
$ Y=(\W_i)_{i\in \cE_1\cup\, \theta^{k} \cE_2}$, for any $k$.
Then one uses a convenient equality due to Piterbarg \cite{Pit82}.
\begin{proposition}
Let $X = (X_1, . . . , X_n)$ and $Y = (Y_1, . . . , Y_n)$ be two independent families of centered Gaussian random variables. Let $g : \bbR^n \to\bbR^1$ be a function with bounded second derivatives. Then
\[ \bbE[g(X)]-\bbE[g(Y)] =
\frac{1}{2} \sum_{i,j=1}^n \big( \bbE[X_iX_j] -\bbE[Y_iY_j] \big) \int_{0}^1
\frac{\partial^2 g}{\partial x_i \partial x_j} \big( (1-t)^{1/2}X+t^{1/2} Y \big) \dd t.
\]
\end{proposition}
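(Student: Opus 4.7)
The plan is to prove Piterbarg's formula by the standard Gaussian interpolation argument. Since $X$ and $Y$ are independent, I define the interpolation
\[
Z(t) := (1-t)^{1/2} X + t^{1/2} Y, \qquad t\in[0,1],
\]
so that $Z(0)=X$ and $Z(1)=Y$, and $Z(t)$ is itself a centered Gaussian vector with covariance $(1-t)\,\bbE[X_iX_j] + t\,\bbE[Y_iY_j]$. Then I write
\[
\bbE[g(X)] - \bbE[g(Y)] = -\int_0^1 \frac{\dd}{\dd t}\bbE[g(Z(t))]\,\dd t,
\]
and the goal reduces to evaluating the integrand.

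By the chain rule,
\[
\frac{\dd}{\dd t} g(Z(t)) = \sum_{i=1}^n \frac{\partial g}{\partial x_i}(Z(t))\left( -\frac{X_i}{2(1-t)^{1/2}} + \frac{Y_i}{2 t^{1/2}} \right),
\]
and taking expectations splits the problem into two terms involving $\bbE[X_i\, \partial_i g(Z(t))]$ and $\bbE[Y_i\, \partial_i g(Z(t))]$. I then apply Gaussian integration by parts (Stein's identity) on $X$ (conditionally on $Y$) for the first term, and on $Y$ (conditionally on $X$) for the second. Since $\partial_i g(Z(t))$ depends on $X_j$ only through the factor $(1-t)^{1/2} X_j$ inside $Z(t)$, one finds
\[
\bbE\bigl[X_i\,\partial_i g(Z(t))\bigr] = (1-t)^{1/2}\sum_{j=1}^n \bbE[X_iX_j]\,\bbE\bigl[\partial_i\partial_j g(Z(t))\bigr],
\]
and analogously for the $Y$-term with $(1-t)^{1/2}$ replaced by $t^{1/2}$. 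The $(1-t)^{1/2}$ and $t^{1/2}$ cancel precisely against the denominators coming from $\frac{\dd}{\dd t}Z(t)$, which removes the apparent singularities at the endpoints and yields
\[
\frac{\dd}{\dd t}\bbE[g(Z(t))] = \frac12\sum_{i,j=1}^n \bigl(\bbE[Y_iY_j]-\bbE[X_iX_j]\bigr)\,\bbE\bigl[\partial_i\partial_j g(Z(t))\bigr].
\]
Integrating over $t\in[0,1]$ and rearranging gives the announced identity.

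The only subtleties are standard: the differentiation under the expectation is justified because $g$ has bounded second derivatives (so the integrand is uniformly bounded in $t$), and the Gaussian integration-by-parts step requires the usual mild decay of $g$ and its first derivatives at infinity, which again follows from boundedness of the Hessian combined with at most quadratic growth of $g$. The main \emph{conceptual} point — and arguably the only non-mechanical step — is recognizing that the $1/\sqrt{t}$ and $1/\sqrt{1-t}$ blow-ups must cancel after the integration by parts; once the interpolation $Z(t)=(1-t)^{1/2}X + t^{1/2}Y$ is set up, this cancellation is automatic, and the rest of the argument is a routine computation.
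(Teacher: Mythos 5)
Your proof is correct. Note, however, that the paper does not actually prove this proposition: it is quoted verbatim as ``a convenient equality due to Piterbarg'' with a reference to \cite{Pit82}, so there is no in-paper argument to compare against. The Gaussian interpolation you use --- setting $Z(t)=(1-t)^{1/2}X+t^{1/2}Y$, differentiating $\bbE[g(Z(t))]$ in $t$, and applying Stein's identity separately in $X$ (conditionally on $Y$) and in $Y$ (conditionally on $X$) so that the factors $(1-t)^{1/2}$ and $t^{1/2}$ cancel the singular denominators --- is precisely the standard proof of this comparison identity (it is the same computation that underlies the Slepian--Kahane lemmas), and your handling of the technical points (domination for differentiating under the expectation, validity of the integration by parts given the bounded Hessian, continuity at the endpoints $t=0,1$) is adequate. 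One small remark: as printed in the paper the right-hand side lacks an expectation around $\frac{\partial^2 g}{\partial x_i\partial x_j}\big((1-t)^{1/2}X+t^{1/2}Y\big)$ and is therefore a random variable; your derivation correctly produces the intended statement with $\bbE\big[\partial_i\partial_j g(Z(t))\big]$ under the integral, which is also the form needed for the monotonicity argument in the proof of Theorem \ref{thm:signgauss}.
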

We approximate the indicator function $\ind_{\{x\geq 0\}}$
by a twice differentiable non-decreasing function $f$,
and $\ind_{\{x_i\geq 0,\, \forall i\in \{1,\ldots,n\}\}}$ by
$g(x_1,\ldots,x_n)=\prod_{i=1}^n f(x_i)$, we have in particular that
$\frac{\partial^2 g}{\partial x_i \partial x_j}  \geq 0$
for every $i\neq j$.
Since in our case
$\bbE[X_iX_j] \leq \bbE[Y_iY_j]$ for all $i,j\in\{1,\ldots,n\}$ (with equality for $i=j$),
we get that $\bbE[g(X)]\leq \bbE[g(Y)]$.
In the end, one obtains that
$\mathbb{P}(\Fav_{\cE_1\cup\, \theta^{k}\cE_2}) \geq \mathbb{P}(\Fav_{\cE_1\cup\, \theta^{k+1}\cE_2})$ for all $k\geq 0$.
\end{proof}

We stress that one is also able to estimate the covariances of the Gaussian signs environment:
they have the same decay exponent as the correlation function $\rho_n$:
thanks to a standard Gaussian computation, one gets that
$
\mathbb{C}\mathrm{ov}(\go_i,\go_{i+k}) \stackrel{k\to\infty}{\sim} \frac{2\rho_k}{\pi}
$.

\subsection{Conventional vs.\ Infinite disorder regime}

We now stress the characteristics of the two regimes, to confront their respective properties.
In particular, we collect the main results on the appearance of
the infinite disorder regime in Figure \ref{fig:threshold},
focusing on the
Gaussian signs environment
to make exposition clearer.

\begin{figure}[htbp]
\centerline{
\psfrag{0}{$-\gb$}
\psfrag{h}{$h$}
\psfrag{f}{$\tf(\gb,h)$}
\psfrag{fa}{\footnotesize $\tf^{\a}(\gb,h)$}
\psfrag{fapur}{\footnotesize $\tf^{\a}(\gb,h)\!\!= \!\!\tf(0,h+\gb)$}
\psfrag{compann}{\small $\nu^{\a}=1\wedge1/\ga$}
\psfrag{compq}{\small $\nu^{\q}=\infty$}
\psfrag{degenerated}{{\color{red} \footnotesize {\sl infinite disorder} regime} }
\psfrag{nondegenerated}{ {\color{red} \footnotesize {\sl conventional} regime}}
\psfrag{hc}{$h_c(\gb)$}
\psfrag{hca}{$h_c^{\a}(\gb)$}
\psfrag{relev}{\footnotesize ? Weinrib-Halperin criterion ?}
\psfig{file=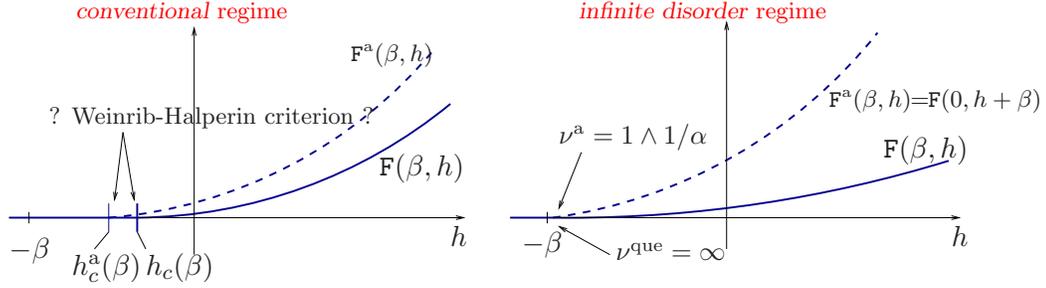,width=5in}}
  \begin{center}
    \caption{ \label{fig:threshold}
    Conventional vs. Infinite disorder regime
    in the case of the Gaussian signs environment.
      In the conventional regime
      (which happens if $a\!>\!1$),
     one has that  $h_c(\gb)\!\geq\! h_c^{\a}(\gb)\!>\! -\gb$.
     The question of disorder relevance/irrelevance is still not settled,
     the Harris criterion is believed to be valid in this regime.
     In the infinite disorder regime
      (which happens if $a\!<\!1$), the picture is unconventional:
      the annealed model is trivial, the quenched
      critical point $h_c(\gb)$ is equal to its minimal possible value $-\gb$,
      and the phase transition is of infinite order.
 }
  \end{center}
\end{figure}

\textbullet\ Conventional regime:
$\bbP(\xi_1\geq n)$ decays exponentially fast.
In terms of the distribution
and size of favorable segments in the environment,
one has the same typical properties as in the IID case
(\textit{i.e.}\ if $-1$'s and $+1$'s were IID): favorable regions
are (exponentially) far one from another and do not aggregate.
We hence expect that in this case the ideas of Harris and Weinrib-Halperin are applicable,
giving a criterion
for disorder relevance/irrelevance.

\textbullet\ Infinite disorder regime:
$\bbP(\xi_1\geq n)$ decays subexponentially.
It means that large favorable regions are much more frequent (or
aggregate much more) than in the IID case, and are in particular
only subexponentially  far one from another.
The pinning model then exhibits some unusual behavior:
the critical point is equal to its minimal possible value ($h_c(\gb)=-\gb$),
the annealed model is trivial (one has $\tf^{\a}(\gb,h)=\tf(h+\gb)$, as
if the environment were constituted of only $+1$'s, see Section \ref{sec:annealed}), and
disorder is strongly relevant.
Moreover, if $\bbP(\xi_1\geq n)$ decays faster than any power of $n$
(for example stretched-exponentially), then the bound \eqref{roughF} yields
that the transition is actually of infinite order.

\begin{rem}\rm
In view of the localization strategy described in Section \ref{sec:bounds},
one realizes that the farther
the first large attractive region is (\textit{i.e.}\
the smaller $\bbP(\xi_1\geq n)$ is), the lower the contact fraction should be.
In \eqref{roughF}, and in \eqref{supFgauss} ,
one verifies this fact:
the faster $\bbP(\xi_1\geq n)$ decays,
the smoother the free energy critical behavior is.
If the infinite disorder regime is barely reached,
it means that attractive regions are
"almost" exponentially far one from another, and that
the free energy should vanish "almost" exponentially fast as $h$ approaches $-\gb$.
On the other hand, if the environment possess extremely large and frequent favorable regions,
then the critical behavior of the disordered model is becoming closer to
that of the pure one (see also Theorem 1.5 in \cite{BLpin}).
\end{rem}

\section{Annealed estimates, proof of Proposition \ref{prop:ann}}
\label{sec:annealed}

We now comment on the annealed model,
that exhibits a trivial behavior in the infinite disorder regime.
\begin{proposition}
\label{prop:ann}
 Under Assumption \ref{hyp1}, one has
 the following criterion
\begin{equation}
 \liminf_{n\to\infty} -\frac{1}{n} \log \bbP(\Fav_{\llbracket 1,n\rrbracket}) = 0 \quad \Longleftrightarrow 
   \quad \tf^{\a}(\gb,h)=\tf(0,h+\gb) \ \text{for all } h\in\bbR, \gb\geq0 .
\end{equation}
\end{proposition}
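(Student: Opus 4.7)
The unconditional upper bound $\tf^{\a}(\gb,h)\leq\tf(0,h+\gb)$ follows from $\gb\go_n\leq\gb$, so the question is when it is tight. For $(\Leftarrow)$, assuming infinite disorder, I produce a matching lower bound by restricting the annealed partition function to the event $\Fav_{\llbracket 1,N\rrbracket}$, on which $\go\equiv +1$ on $[1,N]$ and $Z_{N,h}^{\go,\gb}$ collapses to $Z_{N,h+\gb}^{\rm pur}$ pointwise. Fubini then yields
\[
\bbE[Z_{N,h}^{\go,\gb}]\;\geq\;\bbP(\Fav_{\llbracket 1,N\rrbracket})\,Z_{N,h+\gb}^{\rm pur}.
\]
Taking $\frac{1}{N}\log$ along a subsequence $N_k$ on which $\frac{1}{N_k}\log\bbP(\Fav_{\llbracket 1,N_k\rrbracket})\to 0$ (available from the $\liminf$ characterization of infinite disorder) and using $\frac{1}{N}\log Z_{N,h+\gb}^{\rm pur}\to\tf(0,h+\gb)$ gives $\tf^{\a}(\gb,h)\geq\tf(0,h+\gb)$. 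No correlation assumption is needed for this half.

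For $(\Rightarrow)$ I argue contrapositively: if $\liminf_n -\frac{1}{n}\log\bbP(\Fav_{\llbracket 1,n\rrbracket})>0$, then by Lemma \ref{lemFKG}---this is precisely where Assumption \ref{hyp1} intervenes---there is $c>0$ with $\bbP(\Fav_{\cE'})\leq e^{-c|\cE'|}$ for \emph{every} finite subset $\cE'\subset\bbN$. The key algebraic step is then the identity $e^{\gb\go_n}=e^{-\gb}+(e^\gb-e^{-\gb})\ind_{\{\go_n=+1\}}$, which after distributing the product over $\cE$ and a binomial sum yields
\[
\bbE\left[\prod_{n\in\cE} e^{\gb\go_n}\right]=\sum_{\cE'\subset\cE}e^{-\gb(|\cE|-|\cE'|)}(e^\gb-e^{-\gb})^{|\cE'|}\,\bbP(\Fav_{\cE'})\;\leq\;\lambda(\gb,c)^{|\cE|},
\]
where $\lambda(\gb,c):=e^{-\gb}+(e^\gb-e^{-\gb})e^{-c}$ satisfies $\log\lambda(\gb,c)<\gb$ whenever $c,\gb>0$. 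Inserting this into the Fubini form of the annealed partition function gives $\bbE[Z_{N,h}^{\go,\gb}]\leq Z_{N,h+\log\lambda(\gb,c)}^{\rm pur}$, hence $\tf^{\a}(\gb,h)\leq\tf(0,h+\log\lambda(\gb,c))$. For any $\gb>0$ and any $h$ with $h+\gb>0$, strict monotonicity of $\tf(0,\cdot)$ in the localized phase then makes this strictly smaller than $\tf(0,h+\gb)$, contradicting the assumed equality.

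The main obstacle is the passage from control of $\bbP(\Fav_{\llbracket 1,n\rrbracket})$ along \emph{consecutive} indices to the uniform bound $\bbP(\Fav_{\cE'})\leq e^{-c|\cE'|}$ over \emph{arbitrary} index sets $\cE'$, which is exactly what Assumption \ref{hyp1} delivers via Lemma \ref{lemFKG}; without it the subset expansion cannot be closed. This is also the reason why Theorem \ref{thm:criticpoint}(2) is phrased with the same subset-uniform hypothesis, and why removing Assumption \ref{hyp1} from Proposition \ref{prop:ann} (as in Conjecture \ref{conj}) remains open.
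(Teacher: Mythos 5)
Your proof is correct and follows essentially the same route as the paper: the forward implication by restricting the annealed expectation to $\Fav_{\llbracket 1,N\rrbracket}$ to get $\bbE[Z_{N,h}^{\go,\gb}]\geq \bbP(\Fav_{\llbracket 1,N\rrbracket})Z_{N,h+\gb}^{\rm pur}$, and the converse via Lemma \ref{lemFKG} followed by a binomial expansion of $e^{\gb\sum\go_n\gd_n}$ over subsets of renewal points (the paper's Proposition \ref{prop:localann} writes $e^{\gb\tilde\go_n\gd_n}$ with $\tilde\go_n=\ind_{\{\go_n=+1\}}$ and gets the factor $1+(e^{\gb}-1)e^{-c_0}$ in place of your $\lambda(\gb,c)$, producing the same effective shift strictly below $\gb$ and hence $h_c^{\a}(\gb)>-\gb$). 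The only slip is cosmetic: your arrow labels are swapped relative to the displayed equivalence --- the direction you call $(\Leftarrow)$ (infinite disorder implies annealed triviality) is $(\Rightarrow)$ as stated, and vice versa.
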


\subsection{Triviality of the annealed system in the infinite disorder regime}

If one has that $\liminf_{n\to\infty} - \frac1n \log \bbP(\xi_1\geq n)=0$
(infinite disorder), the average behavior of the partition function is actually dominated
by the very large fluctuations in the disorder: the annealed model is trivial.
Indeed, imposing all $\go_i$'s to be equal to $+1$ in a system of size $N$, one obtains the bound
$\Eo Z_{N,h}^{\go,\gb} \geq \Po(\xi_1\geq N) Z_{N,h+\gb}^{\rm pur}$. Thus, we readily have
\begin{equation}
\label{trivann}
 \tf^{\a}(\gb,h)= \lim_{N\to\infty} \frac1N \log \Eo Z_{N,h}^{\go,\gb}
   \geq \limsup_{N\to\infty}\frac1N \log \Po(\xi_1\geq N)  +\tf(0,h+\gb) = \tf(0,h+\gb).
\end{equation}
Since one has the other trivial bound $Z_{n,h}^{\go,\gb}\leq Z_{n,h+\gb}^{\rm pur}$,
one gets $\tf^{\a}(\gb,h) = \tf(0,h+\gb)$, which gives the first
part of Proposition \ref{prop:ann}
(the other implication is proven by Proposition~\ref{prop:localann}).

The bound
$\tf(\gb,h)\leq \tf^{\a}(\gb,h)$ therefore gives no more information
than the trivial one $\tf(\gb,h)\leq \tf(0,h+\gb)$ (see Figure \ref{fig:threshold}).


\smallskip
{\bf On the correlation lengths.}
In the IID  case, $\tf(\gb,h)^{-1}$ is $\bbP$-a.s.\ equal to the
exponential decay rate of the two-point correlation function
$\E_{N,h}^{\go,\gb}(\gd_i\gd_{i+k})-\E_{N,h}^{\go,\gb}(\gd_i)\E_{N,h}^{\go,\gb}(\gd_{i+k})$
when $k\to\infty$, 
as proven in \cite[Th.3.5]{T06}
(when $\bP$ is the law of the return times to the origin of the simple random walk).
Therefore, $\tf(\gb,h)^{-1}$ is the \textit{quenched} correlation length in the
IID  case.
One also defines the usual \textit{quenched-averaged} correlation length, \textit{i.e.}\
the inverse of the exponential decay rate of 
$\bbE[\E_{N,h}^{\go,\gb}(\gd_i\gd_{i+k})-\E_{N,h}^{\go,\gb}(\gd_i)\E_{N,h}^{\go,\gb}(\gd_{i+k})]$.
In the IID case, it
is shown to be equal to $\mu(\gb,h)^{-1}$ \cite[Th.3.5]{T06}
(under the particular assumptions already mentioned),
where
\begin{equation}
\label{defmu}
 \mu(\gb,h):=-\lim_{N\to\infty} \frac{1}{N}\log\bbE\Big[ \frac{1}{Z_{N,h}^{\go,\gb}}\Big].
\end{equation}

We believe that
this correlation length(s) interpretation is still valid in the correlated framework.
One easily gets
from Jensen inequality that $\mu(\gb,h)\leq \tf(\gb,h)$,
and in the IID  framework
one actually has that $c_{\gb}\tf(\gb,h)^2<\mu(\gb,h)<\tf(\gb,h)$ for $h>h_c(\gb)$
(a better lower bound is given in \cite[Th.3.3]{T06}).
It means that
the {\sl quenched} and {\sl quenched-averaged} correlation lengths diverge at the
same critical point, namely $h_c(\gb)$.

We now stress that this picture is fully changed in the infinite disorder regime.
If we assume that the distribution of $+1$'s and $-1$'s are symmetric, then
one also has that $\liminf_{n\to\infty} -\frac1n \log \bbP(\xi_2\geq n)=0$,
and the average in \eqref{defmu}
is dominated by the rare repulsive regions.
One has
\begin{equation}
\frac1N \log \bbE\Big[ \frac{1}{Z_{N,h}^{\go,\gb}}\Big]
 \geq \frac1N\log \bbP(\xi_1=1,\xi_2\geq N)  -\frac1N\log Z_{N,h-\gb}^{\rm pur},
\end{equation} 
which directly give that $\mu(\gb,h)\leq \tf(0,h-\gb)$ by letting $N$ go to infinity.
The trivial bound $Z_{N,h}^{\go,\gb}\geq Z_{N,h-\gb}^{\rm pur}$
then gives that $\mu(\gb,h)=\tf(0,h-\gb)$.

Therefore, in the infinite disorder regime,
the {\sl quenched} correlation length $\tf(\gb,h)^{-1}$ diverges at $h_c(\gb)=-\gb$
with a critical exponent larger than $\nu^{\rm pur}$
and possibly infinite (see Theorems \ref{thm:stronglyrel}-\ref{thm:signgauss}).
On the other hand. the {\sl quenched-average} correlation length
$\mu(\gb,h)^{-1}=\tf(0,h-\gb)^{-1}$ diverges at $+\gb$, with critical exponent $\nu^{\rm pur}$.
This stresses again the unconventional behavior
of the system in presence of infinite disorder,
where $\tf(\gb,h)$ and $\mu(\gb,h)$
have different critical points, and where $\tf(\gb,h)$
has a larger critical exponent than $\mu(\gb,h)$.

\subsection{Annealed bounds in the conventional regime}
\label{sec:annclassic}

The following result shows that in the {\sl conventional} regime, the annealed
bound $\tf(\gb,h)\leq \tf^{\a}(\gb,h)$ is not trivial:
bounds on the annealed free energy are fruitful,
in particular to show that $h_c(\gb)>-\gb$. 
\begin{proposition}
 \label{prop:localann}
If there exists a constant $c_0>0$ such that
for all indices $1\leq i_1<\ldots<i_m$ one has
$\bbP\left( \Fav_{\{i_1,\ldots, i_n\}} \right)\leq e^{-c_0 n}$,  
then 
for all $\gb>0$ one has
\begin{equation}
\tf^{\a}(\gb,h) \leq \tf(0,h+(1-c_{\gb})\gb) \quad 
\text{with } c_{\gb}:=-\frac{1}{\gb} \log \big( 1\!-\! (1\!-\!e^{-c_0})(1\!-\!e^{-\gb}) \big)>0.
\end{equation}
In particular $h_c(\gb)\geq h_c^{\a}(\gb)\geq -(1-c_{\gb})\gb>-\gb$ for all $\gb>0$.
Note that the constant $c_{\gb}$ is uniformly bounded away from $0$ for $\gb\in(0,1)$.
\end{proposition}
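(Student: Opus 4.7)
The plan is to control the annealed partition function directly by a pure (homogeneous) partition function at a shifted value of $h$. Using Fubini to exchange the disorder and renewal expectations, and writing $I=I(\tau,N):=\{n\leq N:n\in\tau\}$ for the set of contact points up to $N$, one has
\begin{equation*}
\bbE[Z_{N,h}^{\go,\gb}] \;=\; \bE\!\left[ e^{h|I|}\, \bbE\!\left[\prod_{n\in I} e^{\gb\go_n}\right] \right].
\end{equation*}
The key step is to bound the inner expectation in a manner that decouples across $I$. The natural identity, valid because $\go_n\in\{-1,+1\}$, is
\begin{equation*}
e^{\gb\go_n} \;=\; e^{-\gb} + (e^{\gb}-e^{-\gb})\,\ind_{\{\go_n=+1\}}.
\end{equation*}

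Expanding the product $\prod_{n\in I}e^{\gb\go_n}$ with this identity and taking the $\bbE$-expectation term by term, the contribution of a subset $S\subset I$ is exactly $(e^{\gb}-e^{-\gb})^{|S|} e^{-\gb(|I|-|S|)}\,\bbP(\Fav_S)$. Applying the hypothesis $\bbP(\Fav_S)\leq e^{-c_0|S|}$ and using the binomial identity $\sum_{S\subset I} a^{|S|}b^{|I|-|S|}=(a+b)^{|I|}$ with $a=(e^{\gb}-e^{-\gb})e^{-c_0}$ and $b=e^{-\gb}$, this yields
\begin{equation*}
\bbE\!\left[\prod_{n\in I} e^{\gb\go_n}\right] \;\leq\; \Big(e^{\gb}e^{-c_0}+e^{-\gb}(1-e^{-c_0})\Big)^{|I|}
\;\leq\; \Big(e^{\gb}\bigl[1-(1-e^{-c_0})(1-e^{-\gb})\bigr]\Big)^{|I|},
\end{equation*}
the last inequality following from $e^{-\gb}\leq 1$. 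Recognising the exponent as $e^{(1-c_{\gb})\gb |I|}$ with $c_{\gb}=-\gb^{-1}\log\!\big(1-(1-e^{-c_0})(1-e^{-\gb})\big)$, we conclude that
\begin{equation*}
\bbE[Z_{N,h}^{\go,\gb}]\;\leq\;\bE\!\left[e^{(h+(1-c_{\gb})\gb)|I|}\right]\;=\;Z^{\rm pur}_{N,h+(1-c_{\gb})\gb}.
\end{equation*}

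Taking logarithms, dividing by $N$, and letting $N\to\infty$ yields $\tf^{\a}(\gb,h)\leq \tf(0,h+(1-c_{\gb})\gb)$. Since $c_0,\gb>0$ one has $0<1-(1-e^{-c_0})(1-e^{-\gb})<1$, so $c_{\gb}>0$; moreover for $\gb\in(0,1)$ one has $1-e^{-\gb}\geq \tfrac{1}{2}\gb$ so $c_{\gb}$ is bounded away from $0$. Combining with the trivial bound $\tf(\gb,h)\leq \tf^{\a}(\gb,h)$, and using that $\tf(0,\cdot)$ vanishes exactly at $0$, one obtains $h_c(\gb)\geq h_c^{\a}(\gb)\geq -(1-c_{\gb})\gb>-\gb$ for every $\gb>0$.

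The argument is essentially a single bookkeeping computation, so there is no genuine hard step. The one thing to take care of is that the hypothesis is used in the form $\bbP(\Fav_S)\leq e^{-c_0|S|}$ for \emph{every} finite subset $S$ (given by the hypothesis applied to the ordered listing of $S$); this is crucial because the binomial sum ranges over all $S\subset I$ and a weaker hypothesis limited to contiguous blocks (as in Assumption \ref{hyp1}) would not suffice—Lemma \ref{lemFKG} is precisely what makes the hypothesis of this proposition strictly weaker than assuming absence of infinite disorder under Assumption \ref{hyp1}, and hence why this proposition implies point (2) of Theorem \ref{thm:criticpoint}.
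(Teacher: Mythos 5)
Your proof is correct and follows essentially the same route as the paper's: a binomial expansion of the disorder interaction over subsets of the contact set, termwise application of the hypothesis $\bbP(\Fav_S)\leq e^{-c_0|S|}$, and re-summation, yielding the identical constant $c_{\gb}$. The only cosmetic difference is that the paper first bounds $\go_n$ by $\ind_{\{\go_n=+1\}}$ and expands $\big((e^{\gb}-1)+1\big)^{\sum\gd_n}$, whereas you keep the exact identity $e^{\gb\go_n}=e^{-\gb}+(e^{\gb}-e^{-\gb})\ind_{\{\go_n=+1\}}$ and concede the same factor at the end via $e^{-\gb}\leq 1$.
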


This result gives
the second part of Proposition \ref{prop:ann}
(since the condition in Proposition \ref{prop:localann} is stronger than Assumption \ref{hyp1}, recall Lemma \ref{lemFKG}) and of Theorem \ref{thm:criticpoint}.
\begin{proof}
First, we use a simple bound $\go_n\leq \tilde \go_n := \go_n\ind_{\{\go_n=+1\}}$,
so that one has $Z_{N,h}^{\go,\gb}\leq Z_{N,h}^{\tilde\go,\gb}$.
Then, as $\tilde\go_n\in\{0,1\}$,
we can expand $e^{\gb\sum_{n=1}^{N}\tilde\go_n \gd_n}$,
thanks to the following binomial expansion
\begin{equation}
 e^{\gb \sum_{n=1}^{N}\tilde\go_n\gd_n } =  (e^{\gb}-1+1)^{\sum_{n=1}^{N}\tilde \go_n\gd_n}= \sum_{m=0}^{N} (e^{\gb}-1)^{m}
   \sum_{1\leq i_1<\ldots<i_m\leq N} \prod_{k=1}^{m}\tilde\go_{i_k} \gd_{i_k}.
\end{equation}
Thus, using the condition $\bbP\left( \Fav_{\{i_1,\ldots, i_n\}} \right)\leq e^{-c_0 n}$, one gets
\begin{multline}
\Eo\left[ e^{\gb\sum_{n=1}^{N}\tilde\go_n\gd_n}\right]
  = \sum_{m=0}^{N} (e^{\gb}-1)^{m}
   \sum_{1\leq i_1<\ldots<i_m\leq N}
        \Po\left( \go_{i_1}=1,\ldots,\go_{i_m}=1 \right)\prod_{k=1}^{m}\gd_{i_k} \\
 \leq  \sum_{m=0}^{N} (e^{\gb}-1)^{m}e^{-c_0 m}
   \sum_{1\leq i_1<\ldots<i_m\leq N} \prod_{k=1}^{m}\gd_{i_k} = (e^{-c_0}(e^{\gb}-1)+1)^{\sum_{n=1}^{N}\gd_n}.
\end{multline}
Defining $c_{\gb}:=-\frac{1}{\gb} \log \scriptstyle{\left( 1- (1-e^{-c_0})(1-e^{-\gb}) \right)}$
such that $e^{-c_0}(e^{\gb}-1)+1 =  e^{(1-c_{\gb}) \gb}$, one finally obtains that 
for all $\gb>0$
\begin{equation}
 \Eo Z_{N,h}^{\go,\gb}
  \leq  \bE\left[ e^{(h+(1-c_{\gb})\gb)\sum_{n=1}^{N}\gd_n}\right]
    = Z_{N,h+(1-c_{\gb})\gb}^{\rm pur},
\end{equation} 
which gives the results.
\end{proof}

\section{Bounds on the free energy, proof of Proposition \ref{prop:boundsF}}
\label{sec:proof}

\subsection{Lower Bound on the free energy}
\label{sec:signlow}

First of all, let us define the \emph{pinned} partition function, where trajectories are constrained to return
to $0$ at their endpoint $N$:
$ Z_{N,h}^{\go,\gb,\pin}:= \bE\left[ \exp\left( \sum_{n=1}^N  (h+\gb\go_n) \gd_n\right)\ind_{\{N\in\tau\}} \right]$.
The main advantage of the \emph{pinned} partition function is that
it has a (ergodic) supermultiplicative property:
\begin{equation}
Z_{N+M,h}^{\go,\gb,\pin} \geq \bE\left[ e^{\sum_{n=1}^N  (h+\gb\go_n) \gd_n} 
      \ind_{\{N\in\tau\}} \ind_{\{N+M\in\tau\}}\right]
           = Z_{N,h}^{\go,\gb,\pin} \times Z_{M,h}^{\theta^N \go,\gb,\pin},
\end{equation}
where $\theta$ is the shift operator: $\theta^{p} \go = (\go_{i+p})_{i\in\bbN}$.
This translate into a super-additive property for the $\log$ of the pinned partition function.
We also introduce the notation used in \cite{BLpin}
of the partition functions (with free or pinned boundary condition) over
a given segment $[a,b]$, $a,b\in\N$, $a<b$:
\begin{equation}
\label{defZblock}
Z^{\go,\gb}_{[a,b],h}:= e^{\gb\go_a+h}Z_{(b-a),h}^{\theta^a \go,\gb},\quad \text{and} \quad
Z^{\go,\gb,\pin}_{[a,b],h}:=Z_{(b-a),h}^{\theta^a \go,\gb,\pin}.
\end{equation}
We mention that as $\go$ is ergodic (thus stationary), $Z_{(b-a),h}^{\theta^a \go,\gb}$
has the same law as $Z_{(b-a),h}^{\go,\gb}$.
With notations \eqref{defZblock}, the super-additive property of the $\log$ of the pinned partition function extends to
partition functions over segments: for any non-negative integers $a<b<c$, one has
\begin{equation}
\label{superad}
\log Z_{[a,c],h}^{\go,\gb,\pin} \geq \log Z_{[a,b],h}^{\go,\gb,\pin} +\log Z_{[b,c],h}^{\go,\gb,\pin}.
\end{equation}

\subsubsection{General lower bound}

We work with the pinned partition function,
and we recall that, as far as the free energy is concerned, this is equivalent to working with
the partition function with ``free'' boundary condition (see \cite[Rem. 1.2]{GBbook}).
To get a lower bound on the free energy, we use a classical technique, that is
to find a strategy of localization for the polymer, aiming only at some particular blocks.
We give a very general lower bound, valid for any strategy, where trajectories are "aiming" at particularly favorable stretches in the environment.
We divide the environment in a sequence of blocks
$(\mathbf{T}_j, \mathbf{T}_{j+1}]_{j\geq 0}$
(we take for example $\mathbf{T}_j$ for among the $T_i$'s).
The super-additivity of the logarithm of the partition function (see \eqref{superad}) gives
\begin{equation}
\label{firstlowbound}
 \log  Z_{\mathbf{T}_N,h}^{\go,\gb,\pin}
 \geq \sum_{j=1}^{N} \log Z_{[\mathbf{T}_{j-1}, \mathbf{T}_{j}],h}^{\go,\gb,\pin},
\end{equation}
where we used notation \eqref{defZblock}, and that $\mathbf{T}_0=0$

One natural choice is actually to define the sequence $(\mathbf{T}_j)_{j\geq0}$ iteratively: $\mathbf{T_0}=0$, $\mathbf{T}_1=\mathbf{T}_1^{h,\gb}(\go)$ is the first time when a "favorable" region ends (for example, $\cT_1(A)$ for some $A$), and then for any $k\geq1$, define $\mathbf{T}_{k+1}=\mathbf{T}_1\left(\theta^{\mathbf{T}_k}\go\right)$. Then, the increments $(\mathbf{T}_{k}-\mathbf{T}_{k-1})_{k\geq 1}$ form an ergodic sequence \cite[Th.I.2.19]{Shields} , and have the same law as $\mathbf{T}_{1}$ ($\mathbf{T}_k$ can be thought as \emph{return times} in terms of the sequences $(\go_n)_{n\geq 0}$ or $(T_n-T_{n-1})_{n\geq 1}$).

For instance, one can take
$\mathbf{T}_k=T_{kN_0}$ for some fixed $N_0$, or
$\cT_k(A)$ for some (deterministic or random) number $A$.
This iterative definition allows us to choose any initial strategy (finding the first favorable region),
and then to repeat this strategy all along the environment.
In that case, and provided that $\bbE[\mathbf{T}_1]<\infty$,
 \eqref{firstlowbound} gives
\begin{multline}
\label{lowgeneralF}
\tf(\gb,h) = \lim_{N\to\infty} \frac{1}{\mathbf{T}_N} \log  Z_{\mathbf{T}_N,h}^{\go,\gb,\pin}\\
   \geq \lim_{N\to\infty} \frac{N}{\mathbf{T}_N} \frac1N
      \sum_{j=1}^{N} \log Z_{[\mathbf{T}_{j-1}, \mathbf{T}_{j}],h}^{\go,\gb,\pin}
        = \frac{1}{\bbE[\mathbf{T}_1]} \bbE\left[ \log Z_{\mathbf{T}_1,h}^{\go,\gb,\pin }\right],
\end{multline}
where we used twice Birkhoff's Ergodic Theorem for the last equality.

\smallskip
From now on, we write $h:=-\gb+u$,
since we study the behavior of the free energy as $h$
goes to $-\gb$ ($\gb$ being a fixed constant).
Thanks to the bound \eqref{lowgeneralF}, one is only left with picking some 
$\mathbf{T}_1$ wisely, and building a 
strategy of localization 
on the segment $[0,\mathbf{T}_1]$.
We take
$\mathbf{T}_1$ the first position where some large attractive region appears,
and target directly this region.
Namely, one chooses
$\mathbf{T}_1=\cT_1(\mathbf{A})$ for some $\mathbf{A}:=\mathbf{A}(\gb,u,\go)$ (that has yet to be chosen),
and target directly the last $\mathbf{A}$-{\sl block}, where $\go\equiv+1$ 
(recall notations of Section \ref{sec:notations}).
This leads to the lower bound
$Z_{\cT_1(\mathbf{A}),-\gb+u}^{\go,\gb,\pin}\geq \K(\cT_1(\mathbf{A}) - \mathbf{A}) Z_{\mathbf{A}, u }^{\rm pur,\pin}$.

Now, one uses \cite[Lem. 3.1]{BLpin} that gives $Z_{n,u}^{\rm pur,\pin}\geq C n^{-1} e^{n\tf(u)}$
for all $u\in(0,1)$
and all $n\in\bbN$.
Together with the assumption on $\K(\cdot)$ that yields that there is a constant $c$ such that
$\K(n)\geq c n^{-(1+\ga)}$, one finally gets
\begin{equation}
 \log Z_{\cT_1(\mathbf{A}),-\gb+u}^{\go,\gb,\pin} 
   \geq -(1+\ga) \log \cT_1(\mathbf{A}) + \mathbf{A}\tf(u)- \log \mathbf{A} -C.
\end{equation}
Then, using that $\cT_1(\mathbf{A})\geq \mathbf{A}$ (and $\mathbf{A}$ is also chosen larger than $2$)
and recalling \eqref{lowgeneralF},
one ends up with
\begin{equation}
\label{goodbound}
\tf(\gb,-\gb+u) \geq \frac{1}{\bbE[\cT_1(\mathbf{A})]}
\big( \bbE\left[\mathbf{A}\right]\tf(u) - \mathbf{C} \bbE[\log \cT_1(\mathbf{A})]\big), \ \ \ \text{for all } u\in(0,1),
\end{equation}
with $\mathbf{C}$ a given constant depending only on the law of the renewal. The choice of the strategy is now reduced to the choice of $\mathbf{A}=\mathbf{A}(u,\go)$ (we do not have to optimize on $\gb$ since none of the constants depend on $\gb$).
To get localization, \textit{i.e.}\ $\tf(\gb,h)>0$,
$\mathbf{A}$ must be such that $\bbE[\cT_1(\mathbf{A})]$ is finite, and also such that
$\bbE\left[\mathbf{A}\right]\tf(u) > \mathbf{C} \bbE[\log \cT_1(\mathbf{A})]$.

\subsubsection{Choice of the strategy}

We now 
take $\mathbf{A}:=A_u$ non-random, chosen in a moment.
One gets from \eqref{goodbound} that 
\begin{multline}
\tf(\gb,-\gb+u) \geq \frac{1}{\bbE[\cT_1(A_u)]}
A_u \left( \tf(u) - \mathbf{C}\frac{1}{A_u}\bbE[\log \cT_1(A_u)]\right) \\
  \geq   cst.\,  A_u\, \bbP(\xi_1\geq A_u) \, \left(  \tf(u) - \mathbf{C} (-\frac{1}{A_u}  \log \bbP(\xi_1\geq A_u) ) \right) \label{eq1},
\end{multline}
where we used Jensen's inequality to have that $\bbE[\log \cT_1(A_u)]\leq \log \bbE[\cT_1(A_u)]$,
equation \eqref{eq:T1A} to get $\bbE[\cT_1(A_u)]=cst. \bbP(\xi_1\geq A_u)^{-1}\geq \bbP(\xi_1\geq A_u)^{-1}$.
and the definition \eqref{defeps} of $\gep$.
Then, in \eqref{eq1} one chooses $A_u:= \gep^{-1}\left( \frac{\tf(u)}{2\mathbf{C}} \right)$, to finally get \eqref{lowboundF1}.

\begin{rem}\rm
We now comment on the natural strategy, sketched in Section \ref{sec:comments1}:
one would choose $\cT_1(\mathbf{A})$ the first position
where the energetic gain on the $\mathbf{A}$-\textsl{block}, $\mathbf{A}\tf(u)$, compensates the entropic cost of targeting it,
$-\mathbf{C}\log \cT_1(\mathbf{A}$.
Recall the notation \eqref{defAgo}: for every $\gd>0$
$A_{\gd}^{(\go)} : = \inf \left\{A, \log \cT_1(A) <\gd A \right\}$.
It is then natural to take $\mathbf{A}:=A_{\tf(u)/\mathbf{C}}^{(\go)}$ (which is random),
since
$A_{\tf(u)/\mathbf{C}}^{(\go)}\tf(u)-\mathbf{C}\log \cT_1(A_{\tf(u)/\mathbf{C}}^{(\go)})> 0$.
One has however to be careful to show that this is also a good localization strategy,
because the condition $\bbE[\cT_1(A_{\tf(u)/\mathbf{C}})]<\infty$ in \eqref{goodbound} is not guaranteed here.
\end{rem}

\subsection{Upper bound on the free energy}
\label{sec:upgauss}

In this Section, we work with the partition function with free boundary condition.

\subsubsection{The coarse-graining argument}

We cut the system into segments
$(\mathbf{T}_{k-1},\mathbf{T}_{k}]$,
we estimate the contribution of the different segments
separately, and then identify the ones 
that could actually contribute to the free energy.
We give the following  coarse-graining lemma,
that enables us to do so.

\begin{lemma}
\label{lem:coarse}
For any increasing sequence of integers $0=\mathbf{T}_0<\mathbf{T}_1<\mathbf{T}_2<\cdots$, and
every $N\in \N$, one has
\begin{equation}
\label{coarse1}
 Z_{\mathbf{T}_{N},h}^{\go,\gb}\le \prod_{k=1}^N \max_{y\in[0,\mathbf{T}_N]}
 \left[ \sum_{t=1}^{\mathbf{T}_k-\mathbf{T}_{k-1}} \frac{\K(y+t)}{\bar\K(y)} 
   Z_{[\mathbf{T}_{k-1}+t,\mathbf{T}_{k}],h}^{\go,\gb} 
   + \frac{\bar \K(y+\mathbf{T}_k-\mathbf{T}_{k-1})}{\bar \K(y)}\right].
\end{equation}
One also gets the rougher bound
\begin{equation}
\label{coarse2}
 Z_{\mathbf{T}_{N},h}^{\go,\gb}\le \prod_{k=1}^N \left[\left(\max_{x\in (\mathbf{T}_{k-1}, \mathbf{T}_{k}]}
Z^{\go,\gb}_{[x,T_{i}],h}\right)\vee 1\right].
\end{equation}
\end{lemma}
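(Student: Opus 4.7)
My plan is to prove \eqref{coarse1} by iterating a one-step ``Markov renewal'' identity, and then to derive \eqref{coarse2} from \eqref{coarse1} by a convex-combination argument. The key input is the following identity, obtained by decomposing trajectories of $\tau$ in $[0, \mathbf{T}_N]$ according to (i) the last contact $\ell \in [0, \mathbf{T}_{N-1}]$ (well-defined since $0 \in \tau$), and (ii) whether $\tau$ has a first contact in $(\mathbf{T}_{N-1}, \mathbf{T}_N]$ at some $\mathbf{T}_{N-1}+t$, or no contact there at all:
\begin{equation*}
Z_{\mathbf{T}_N, h}^{\go, \gb} \;=\; \sum_{\ell=0}^{\mathbf{T}_{N-1}} Z_{\ell, h}^{\go, \gb, \pin}\; \bar\K(\mathbf{T}_{N-1}-\ell)\; \Phi_N(\mathbf{T}_{N-1}-\ell),
\end{equation*}
where $\Phi_N(y)$ is exactly the bracket inside the $k=N$ factor of \eqref{coarse1}. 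The Markov property of $\tau$ at $\ell$ factorizes the trajectory weight as a pinned partition function $Z_{\ell, h}^{\go, \gb, \pin}$ on $[0, \ell]$ times a conditional weight on $(\ell, \mathbf{T}_N]$; the factor $\bar\K(\mathbf{T}_{N-1}-\ell)$ enforces that no contact occurs in $(\ell, \mathbf{T}_{N-1}]$, and the two terms inside $\Phi_N(y)$ correspond, in the state $y = \mathbf{T}_{N-1}-\ell$, to the first block-$N$ contact at $\mathbf{T}_{N-1}+t$ (weighted by $Z_{[\mathbf{T}_{N-1}+t, \mathbf{T}_N], h}^{\go, \gb}$, which by \eqref{defZblock} already includes the energetic weight at that contact), respectively to the absence of any block-$N$ contact.

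From this identity the first bound \eqref{coarse1} is immediate: replacing $\Phi_N(\mathbf{T}_{N-1}-\ell)$ by $\max_{y \in [0, \mathbf{T}_N]} \Phi_N(y)$ and using $\sum_{\ell=0}^{\mathbf{T}_{N-1}} Z_{\ell, h}^{\go, \gb, \pin}\, \bar\K(\mathbf{T}_{N-1}-\ell) = Z_{\mathbf{T}_{N-1}, h}^{\go, \gb}$, I would obtain $Z_{\mathbf{T}_N, h}^{\go, \gb} \leq \big(\max_y \Phi_N(y)\big) \cdot Z_{\mathbf{T}_{N-1}, h}^{\go, \gb}$, and iterating down to $Z_{\mathbf{T}_0, h}^{\go, \gb} = 1$ yields the product over $k = 1, \ldots, N$ in \eqref{coarse1}.

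For \eqref{coarse2} I would simply bound each factor in \eqref{coarse1}. Pulling $\max_{x \in (\mathbf{T}_{k-1}, \mathbf{T}_k]} Z_{[x, \mathbf{T}_k], h}^{\go, \gb}$ out of the sum over $t$ in $\Phi_k(y)$ and using the telescoping identity $\sum_{t=1}^{\mathbf{T}_k - \mathbf{T}_{k-1}} \K(y+t) = \bar\K(y) - \bar\K(y + \mathbf{T}_k - \mathbf{T}_{k-1})$, the bracket becomes a convex combination of $\max_x Z_{[x, \mathbf{T}_k], h}^{\go, \gb}$ and $1$, hence bounded by their maximum $(\max_x Z_{[x, \mathbf{T}_k], h}^{\go, \gb}) \vee 1$. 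The main obstacle in the whole proof is really the renewal bookkeeping behind the first identity, in particular making sure that the energetic weight $e^{h + \gb \go_{\mathbf{T}_{k-1}+t}}$ at the first block-$k$ contact is correctly absorbed into $Z_{[\mathbf{T}_{k-1}+t, \mathbf{T}_k], h}^{\go, \gb}$ via \eqref{defZblock}; once the one-step decomposition is set up correctly, both inequalities follow with essentially no further work.
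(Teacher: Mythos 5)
Your proof is correct and follows essentially the same route as the paper: the paper also proceeds by induction, decomposing according to the last renewal before $\mathbf{T}_{N-1}$ (at distance $y$) and the first renewal after it, which yields exactly your one-step identity and the bound $Z_{\mathbf{T}_N,h}^{\go,\gb}\leq \big(\max_y \Phi_N(y)\big)\, Z_{\mathbf{T}_{N-1},h}^{\go,\gb}$. Your derivation of \eqref{coarse2} via the telescoping sum $\sum_t \K(y+t)=\bar\K(y)-\bar\K(y+\mathbf{T}_k-\mathbf{T}_{k-1})$ and the convex-combination observation is also precisely what the paper's brief remark amounts to.
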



We repeat here that a natural choice, that we make,
is to define the sequence $(\mathbf{T}_k)_{j\geq0}$
iteratively: $\mathbf{T}_1=\mathbf{T}_1^{u,\gb}(\go)$, and then
for any $k\geq1$, one has $\mathbf{T}_{k+1}=\mathbf{T}_1\left(\theta^{\mathbf{T}_k}\go\right)$,
so that the sequence $(\mathbf{T}_{k+1}-\mathbf{T}_{k})_{k\geq0}$ is ergodic.
Then, with Lemma \ref{lem:coarse}, one gets that 
\begin{multline}
\label{upgeneralF}
\tf(\gb,h) = \lim_{N\to\infty} \frac{1}{\mathbf{T}_N} \log  Z_{\mathbf{T}_N,h}^{\go,\gb}\\
   \leq \lim_{N\to\infty} \frac{N}{\mathbf{T}_N} \frac1N
      \sum_{k=1}^{N} \log\left( \max_{y\in\bbN} \left\{ \sum_{t=1}^{\mathbf{T}_k-\mathbf{T}_{k-1}} \frac{\K(y+t)}{\bar\K(y)} 
   Z_{[\mathbf{T}_{k-1}+t,\mathbf{T}_{k}],h}^{\go,\gb} 
   + \frac{\bar \K(y+\mathbf{T}_k-\mathbf{T}_{k-1})}{\bar \K(y)}\right\}\right) \\
        = \frac{1}{\bbE[\mathbf{T}_1]}
     \bbE\left[ \log \left( \max_{y\in\bbN} \left\{ \sum_{t=1}^{\mathbf{T}_1} \frac{\K(y+t)}{\bar\K(y)} 
   Z_{[t,\mathbf{T}_{1}],h}^{\go,\gb} 
   + \frac{\bar \K(y+\mathbf{T}_1)}{\bar \K(y)}\right\} \right)\right]
\end{multline}
where we used twice Birkhoff's Ergodic Theorem for the last equality.

We are therefore reduced to estimate the partition function on the first segment $(0,\mathbf{T}_1]$,
more precisely all the $Z_{[t,\mathbf{T}_{1}],h}^{\go,\gb}$.
Note that one also gets a rougher bound, using the second part of Lemma \ref{lem:coarse}:
\begin{equation}
\label{upsimpleF}
 \tf(\gb,h) \leq \frac{1}{\bbE[\mathbf{T}_1]}
     \bbE\left[ \log \left( \left(\max_{t\in[0,\mathbf{T}_1]}  Z_{[t,\mathbf{T}_{1}],h}^{\go,\gb} \right)\vee 1\right)\right].
\end{equation}

\begin{proof}[Proof of Lemma \ref{lem:coarse}]
The proof is similar to what is done in \cite[Lem. 5.2]{BLpin}, and
we sketch it briefly.
The proof is done by induction. The case $N=1$ is trivial.

Then, one writes
\begin{equation}
\label{proofcoarse1}
  \frac{Z_{\mathbf{T}_{N+1},h}^{\go,\gb}}{Z_{\mathbf{T}_{N},h}^{\go,\gb} }
 = \bE_{\mathbf{T}_N,h}^{\go,\gb} \left[ \exp\left( 
       \sum_{j=\mathbf{T}_N+1}^{\mathbf{T}_{N+1}}  (\gb\go+h) \ind_{\{j\in\tau\}} \right)  \right].
\end{equation}
One decomposes according to the positions of the last renewal before  $\mathbf{T}_N$ (noted $\mathbf{T}_N-y$) and
the first one after $\mathbf{T}_N$ (noted $\mathbf{T}_N+t$) to get, as in \cite[Eq. (4.31)]{BLpin}
\begin{multline}
\label{proofcoarse2}
\bE_{\mathbf{T}_N,h}^{\go,\gb} \left[ \exp\left( 
       \sum_{j=\mathbf{T}_N+1}^{\mathbf{T}_{N+1}}  (\gb\go+h) \ind_{\{j\in\tau\}} \right)  \right]\\
       \leq \max_{y\in[0,\mathbf{T}_N]}  
  \left[  \sum_{t=1}^{\mathbf{T}_{N+1}-\mathbf{T}_N} 
             \frac{\K(y+t)}{\bar\K(y)} Z_{[\mathbf{T}_N+t,\mathbf{T}_{N+1}],h}^{\go,\gb}
         + \frac{\bar\K(y+\mathbf{T}_{N+1}-\mathbf{T}_N)}{\bar\K(y)}  \right].
\end{multline}

The second inequality comes easily from the first one, bounding uniformly
$ Z_{[\mathbf{T}_{k-1}+t,\mathbf{T}_{k}],h}^{\go,\gb}$ by $\max_{x\in (\mathbf{T}_{k-1}, \mathbf{T}_{k}]}
Z^{\go,\gb}_{[x,T_{i}],h}$.
\end{proof}

\subsubsection{Rough coarse-graining: proof of the bound \eqref{simpleboundF}}

We first use a very rough decomposition of our environment, taking $\mathbf{T}_k=T_{2k}$
(recall definition \eqref{defxi}).
Thanks to the coarse-graining lemma \ref{lem:coarse}, and in view of \eqref{upgeneralF},
one is reduced to 
estimate the partition function on the segment $(0,T_{2}]$.
\begin{lemma}
\label{lemZbloc}
 There exist some $C,c_1>0$ and some constant $\mathbf{c}_1>0$ such that for any $\gb\in(0,1)$ and $u\in(0,c_1\gb)$,
\begin{equation}
\label{estimblocs}
 \begin{split}
   \max_{x\in (0, T_{2}]} Z^{\go,\gb}_{[x,T_{2}],-\gb+u} & \leq 1
    \hspace{1.4cm} \text{if } \xi_{1}\leq  \mathbf{c}_1\gb u^{-1},\\
   \max_{x\in (0, T_{2}]} Z^{\go,\gb}_{[x,T_{2}],-\gb+u}& \leq e^{ \xi_{1} u} 
    \hspace{1.9cm} \text{if } \xi_{1}\geq \mathbf{c}_1\gb u^{-1}.
 \end{split}
\end{equation} 
\end{lemma}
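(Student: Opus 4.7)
I would split the analysis according to whether the anchor point $x$ lies in the attractive block $(0,\xi_1]$ (on which $\go\equiv +1$) or in the repulsive block $(\xi_1,T_2]$ (on which $\go\equiv -1$), and then bound $Z^{\go,\gb}_{[x,T_2],-\gb+u}$ via a renewal decomposition of the walk.

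\emph{Step 1 (repulsive anchor).} If $x\in(\xi_1,T_2]$, then $\go_x=-1$ and from \eqref{defZblock}
\[
Z^{\go,\gb}_{[x,T_2],-\gb+u}= e^{-2\gb+u}\, Z^{\theta^x\go,\gb}_{T_2-x,-\gb+u}.
\]
On the remaining segment $\theta^x\go\equiv -1$, so every term $\gb(\theta^x\go)_n+h=-2\gb+u$ is $\leq 0$ (provided $c_1<2$); thus $Z^{\theta^x\go,\gb}_{T_2-x,-\gb+u}\leq 1$ and the whole expression is bounded by $e^{-2\gb+u}\leq 1$.

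\emph{Step 2 (attractive anchor, coarse bound).} If $x\in(0,\xi_1]$ then $\go_x=+1$ and
\[
Z^{\go,\gb}_{[x,T_2],-\gb+u}= e^{u}\, Z^{\theta^x\go,\gb}_{T_2-x,-\gb+u}.
\]
Since the contribution of the $-1$ portion of $\theta^x\go$ has non-positive exponent, I drop it and get
\[
Z^{\theta^x\go,\gb}_{T_2-x,-\gb+u}\leq \bE\Bigl[\exp\Bigl(u\sum_{n=1}^{\xi_1-x}\gd_n\Bigr)\Bigr]= Z^{\rm pur}_{\xi_1-x,\,u}.
\]
The trivial bound $\sum\gd_n\leq \xi_1-x$ then yields $Z^{\go,\gb}_{[x,T_2],-\gb+u}\leq e^{(\xi_1-x+1)u}\leq e^{\xi_1 u}$, which proves the second line of \eqref{estimblocs}.

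\emph{Step 3 (attractive anchor, refined bound).} For the first line of \eqref{estimblocs}, introduce $Q_j:=Z^{\theta^j\go,\gb}_{T_2-j,-\gb+u}$ for $j\in[0,\xi_1]$, and decompose on the value $\tau_1=k$ of the first return:
\[
Q_j= \bar\K(T_2-j)+ e^{u}\sum_{k=1}^{\xi_1-j}\K(k)Q_{j+k}+ e^{-2\gb+u}\sum_{k=\xi_1-j+1}^{T_2-j}\K(k)Q_{j+k}.
\]
Using Step 1 ($Q_{j+k}\leq 1$ whenever $j+k>\xi_1$) and $e^{-2\gb+u}\leq 1$, and setting $M:=\max_{j\in[0,\xi_1]}Q_j$, this gives
\[
M\leq \bar\K(\xi_1)+ e^u\,M\,\bP(\tau_1\leq \xi_1),
\qquad\text{hence}\qquad
M\leq \frac{\bar\K(\xi_1)}{1-e^u\bigl(1-\bar\K(\xi_1)\bigr)}.
\]
Under the assumption $\xi_1\leq \mathbf{c}_1\gb u^{-1}$ (combined with the restriction $u\leq c_1\gb$), I choose $\mathbf{c}_1$ small enough that $\bar\K(\xi_1)\geq c\,\xi_1^{-\ga}$ (from Assumption \ref{assumpK}) dominates $e^u-1\sim u$ by a prescribed margin; the contraction $e^u\bigl(1-\bar\K(\xi_1)\bigr)\leq 1-cu$ then delivers $M\leq 1$ and finally $e^u M\leq 1$, proving the first line of \eqref{estimblocs}.

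\emph{Main obstacle.} The non-trivial step is Step 3: verifying that, for the threshold $\xi_1\leq \mathbf{c}_1\gb u^{-1}$, the entropic cost of the first renewal jump (encoded in $\bar\K(\xi_1)$) really dominates the gain $e^u-1$ collected on the attractive block. This is where I expect to need the sharp pure-model estimate of Lemma \ref{lembasic} — which essentially says that below the pure correlation length $1/\tf(u)$ the pinning partition function stays bounded — in order to absorb the $e^u$ factor and recover a clean ``$\leq 1$'' rather than merely ``bounded by a constant''. The choice of the constants $c_1$ and $\mathbf{c}_1$ then has to be made uniformly in $\gb\in(0,1)$, which is why the renewal-type recursion (rather than a direct majorization by $Z^{\rm pur}_{\xi_1,u}$) is needed.
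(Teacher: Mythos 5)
Steps 1 and 2 are correct and coincide with the paper's treatment (the repulsive anchor gives $e^{-2\gb+u}\leq 1$ at once, and the trivial bound $\sum\gd_n\leq\xi_1$ gives the second line). The problem is Step 3, and it is a genuine gap, not a fixable detail. By bounding $e^{-2\gb+u}\leq 1$ you discard the \emph{only} negative contribution in the model; what remains is a partition function that collects a reward $e^{u}>1$ at every contact in the attractive block and exactly $1$ elsewhere, and such a quantity is always $\geq 1$ (indeed $Q_j\geq Z^{\rm pur}_{\xi_1-j,u}>1$ for $u>0$, $j<\xi_1$). No choice of $\mathbf{c}_1$ can then yield $e^uM\leq 1$. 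This is visible already in your own algebra: the correct consequence of your recursion is $Q_j\leq \bar\K(\xi_1-j)+e^uM\,\bP(\tau_1\leq\xi_1)$, and at $j=\xi_1$ the first term is $\bar\K(0)=1$, so the fixed-point inequality gives at best $M\leq\big(1-e^u(1-\bar\K(\xi_1))\big)^{-1}>1$; even your stated (weaker) inequality $M\leq\bar\K(\xi_1)/\big(1-e^u(1-\bar\K(\xi_1))\big)$ forces $e^uM>1$, since $e^uM\leq1$ would require $e^u-1\leq 0$. The term $\bar\K(\xi_1)$ is not an ``entropic cost'': a free-endpoint partition function with nonnegative rewards pays no cost for not returning. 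Note also that your threshold never actually uses $\gb$, whereas the statement's threshold $\mathbf{c}_1\gb u^{-1}$ visibly must.

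The missing idea is that the first line of \eqref{estimblocs} comes from the \emph{penalty at the first repulsive site}, not from $\bar\K$. The paper keeps the factor $e^{-\gb\ind_{\{l\in\tau\}}}$ with $l=T_1+1-x$ (the first site where $\go=-1$), expands it using $\gb\ind_{\{l\in\tau\}}\leq 1$ to get
$Z^{\go,\gb}_{[x,T_2],-\gb+u}\leq e^u\big(Z^{\rm pur}_{l,u}-c\gb Z^{\rm pur,\pin}_{l,u}\big)$,
and then compares the gain $Z^{\rm pur}_{l,u}-1\leq c'u\,l^{\ga}$ (for $l\leq u^{-1}$, $\ga<1$) with the loss $c\gb\,Z^{\rm pur,\pin}_{l,u}\geq c\gb\,\bP(l\in\tau)\geq c''\gb\, l^{\ga-1}$. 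The loss dominates exactly when $l\leq \mathbf{c}_1\gb u^{-1}$, which is where the threshold in the statement comes from, and the surplus $\gb\,l^{\ga-1}\geq c\gb u^{1-\ga}\gg u$ absorbs the prefactor $e^{u}$. Your instinct that Lemma \ref{lembasic} is needed is only half right: it supplies the upper bound on the gain, but it can never produce a bound $\leq 1$ by itself; you must retain the $e^{-\gb}$ penalty and the renewal-mass lower bound $\bP(l\in\tau)\geq c\,l^{\ga-1}$ to beat it.
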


Thanks to translation invariance, this tells that
the contribution of a block $(T_{2k}, T_{2k+2}]$ in the decompositions \eqref{coarse1}-\eqref{coarse2} is 
null if $\xi_{2k+1}= T_{2k+1}-T_{2k}\leq \mathbf{c}_1 \gb u^{-1}$, and (possibly) non-zero otherwise.
The bounds \eqref{estimblocs} are equivalent to Lemmas 4.3 (for $\ga>1$) and 5.3
(for $\ga<1$)
in \cite{BLpin}, but in the present case, we deal with the cases
$\ga>1$ and $\ga<1$ at the same time (with however some loss in the case $\ga<1$).

\begin{proof}
On the block $(0, T_{2}]$,
one has $\go\equiv+1$ for $i\in(0,T_{1}]$, and $\go_i=-1$ for $i\in(T_{1},T_{2}]$.
The second inequality is then trivial, using only that $\gd_n\leq1$
and that $\gb\go_i -\gb+u \leq 0$ for $i\in(T_{1},T_{2}]$, provided $u\leq 2\gb$: 
$$\sum_{n=x}^{T_{2}} (\gb\go_n-\gb+u)\gd_n\leq \sum_{n=1}^{T_{1}} u =u\xi_{1} .$$

We therefore focus on the first inequality, in the case $\ga\in(0,1)$
(the case $\ga>1$ being treated in \cite[Lem. 4.2]{BLpin},
or equivalently with the following technique).
If $x\in(T_{1}, T_{2})$, $\go_x=-1$, and then one has for $u\leq \gb$
\begin{equation}
\label{supZbloc1}
Z^{\go,\gb}_{[x,T_{2}],-\gb+u} = e^{-2\gb+u}
   \E\left[\exp\left( (-2\gb+u) \sum_{n=1}^{T_{2}-x} \gd_n \right) \right] \leq e^{-\gb}.
\end{equation}

If $x\in(0, T_{1}]$, since $\go_{T_{1}+1}=-1$, one gets for $u\leq \gb$
\begin{equation}
\label{supZbloc}
Z^{\go,\gb}_{[x,T_{2}],-\gb+u}
\leq e^u \E\left[\exp\left( u \sum_{n=1}^{T_{1}+1-x} \gd_n \right) \exp(-\gb \ind_{\{T_{1}+1-x \in\tau\}}) \right].
\end{equation}

Then, setting $l=T_{1}+1-x$, we notice that
\begin{equation}
 \E\left[e^{ u \sum_{n=1}^{l} \gd_n } e^{-\gb \ind_{\{l \in\tau\}}} \right]\\
   \leq  \E\left[e^{ u \sum_{n=1}^{l} \gd_n } \right]
   - c\gb \E\left[e^{ u \sum_{n=1}^{l} \gd_n } \ind_{\{l \in\tau\}} \right] =
       Z_{l,u}^{\rm pur} -c\gb Z_{l,u}^{\rm pur, \pin},
\end{equation}
where we used that $\gb \ind_{\{l\in\tau\}}\leq 1$ to expand $e^{-\gb \ind_{\{l \in\tau\}}}$.
We are therefore left with estimating the {\sl pinned}
and {\sl free} partition function.
When $l\leq u^{-1}$, we have easy bounds:
\begin{equation}
 Z_{l,u}^{\rm pur} = 1 +\sum_{k=1}^{+\infty}  \frac{u^k}{k!} \bE\left[\left(\sum_{n=1}^l \gd_n\right)^k\right]
   \leq 1 + u \bE\left[\sum_{n=1}^l \gd_n \right] \sum_{k=1}^{+\infty}  \frac{(ul)^{k-1}}{k!}
   \leq 1 + c' u l^{\ga} ,
\end{equation}
and also
\begin{equation}
 Z_{l,u}^{^{\rm pur},\pin}\geq \bP(l\in\tau) \geq c'' l^{\ga -1},
\end{equation} 
where we used twice that $\bP(n\in\tau) \stackrel{n\to\infty}{\sim} cst.\ n^{\ga-1}$, see \cite{Doney}.
Note that these bounds are actually sharp when $l$ is smaller than the correlation length (which is
$\tf(u)^{-1}$, cf. \cite{G_correl}).

All together, if $\xi_{1}\leq u^{-1}$, one has that 
\begin{equation}
 Z^{\go,\gb}_{[x,T_{2}],-\gb+u} \leq e^u \left(1+ c'(\xi_{1})^{\ga}\left(u-c \gb (\xi_{1})^{-1}\right)\right).
\end{equation} 
If $\xi_{1}\leq u^{-1}\times c \gb/2$, one finally has
\begin{equation}
 \max_{x\in (0, T_{2}]} Z^{\go,\gb}_{[x,T_{2}],-\gb+u} 
  \leq e^u \left(1- cst.\ \gb (\xi_{1})^{\ga-1} \right) \leq  \exp\left(u-  cst.\ \gb (\xi_{1})^{\ga-1}\right),
\end{equation}
which is smaller than $1$ provided that $u$ is small enough,
since $(\xi_{1})^{\ga-1}\!\geq c u^{1-\ga} \gg u$.
\end{proof}

In the end, Lemma \ref{lemZbloc} gives
\begin{equation}
\max_{x\in (0, T_{2}]} Z_{[x,T_2],-\gb+u}^{\go,\gb}
\leq  \ind_{\{\xi_1< \mathbf{c}_1\gb u^{-1}\}} +e^{u\xi_1} \ind_{\{\xi_1\geq \mathbf{c}_1\gb u^{-1}\}}.
\end{equation} 
Then, the inequality \eqref{upsimpleF} with $\mathbf{T}_1=T_2$ gives that
\begin{equation}
\label{finupperbound}
\tf(\gb,-\gb+u)\leq \frac{1}{\bbE[T_2]} u \bbE\left[ \xi_1 \ind_{\{\xi_1\geq \mathbf{c}_1\gb u^{-1}\}}\right].
\end{equation}

\subsubsection{Refined coarse-graining: proof of bound \eqref{upboundF}}
\label{sec:upbound}

We focus on the case $\ga<1$ since the previous bound is sufficient for our purposes
in the case $\ga>1$, but the following technique works also in the case $\ga>1$.
The procedure that we apply here is actually the same as the one in \cite[Sec. 5.1]{BLpin},
but we recall it here (briefly), for the sake of completeness.

In view of of the inequality \eqref{upgeneralF},
we only have to estimate the partition function on the first segment $[0,\mathbf{T}_1]$.
We take $\gb\in(0,1)$, and we choose
\begin{equation}
\label{defTbolt}
\begin{split}
 \mathbf{T}_1 & :=T_{L}, \\
 \text{with } L=L(u): &= \lfloor\mathbf{c} \gb^{1/\ga} \tf(u)^{-1} \rfloor,  
\end{split}
\end{equation}
where the constant $\mathbf{c}$ has yet to be chosen (small), independently on $\gb$.
We now give here the key lemma, that deals with the estimate of the partition function,
and is extracted from \cite[Lem. 5.3]{BLpin}.
\begin{lemma}
\label{lemZbloc2}
There exist constants $c_1$ and $\mathbf{c}$ (entering in the definition of $L(u)$), such that
for all $\gb\in(0,1)$ and $u\in [0,c_1\gb)$, we have the two following cases:

If $\mathbf{T}_1< 4\bbE[\mathbf{T}_1] $, there is a constant $C$ (independent of $\gb\in(0,1)$) such that
\begin{equation}
\label{boundZhard}
\max_{y\geq 0}
 \left[ \sum_{t=1}^{\mathbf{T}_1} \frac{\K(y+t)}{\bar \K(y)} 
   Z_{[t,\mathbf{T}_{1}],-\gb+u}^{\go,\gb} 
      + \frac{\bar \K(y+\mathbf{T}_1)}{\bar\K(y)}\right] \leq 1.
\end{equation}

If
$\mathbf{T}_1\geq 4\bbE[\mathbf{T}_1] $, then
\begin{equation}
\label{boundZeasy}
\max_{y\geq 0}
 \left[ \sum_{t=1}^{\mathbf{T}_1} \frac{\K(y+t)}{\bar \K(y)} 
   Z_{[t,\mathbf{T}_{1}],-\gb+u}^{\go,\gb} 
      + \frac{\bar \K(y+\mathbf{T}_1)}{\bar\K(y)}\right] \leq e^{c \gb^{1-\nu^{\rm pur}}\tf(u) \mathbf{T}_1},
\end{equation}
for some constant $c$ (independent of $\gb\in(0,1)$).
\end{lemma}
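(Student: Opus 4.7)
The plan is to apply Lemma \ref{lem:coarse} once more, this time \emph{internally} on the block $[0,\mathbf{T}_1]$, using the breakpoints $T_{2k}$ for $k=1,\ldots,L/2$ to decompose each $Z^{\go,\gb}_{[t,\mathbf{T}_1],-\gb+u}$ as a sum-product over the elementary double-blocks $(T_{2k-2},T_{2k}]$, and then to control each local contribution via Lemma \ref{lemZbloc}. The argument closely parallels \cite[Lem.~5.3]{BLpin}; the correlated-environment setting only forces us to invoke the ergodicity of $(\xi_i)$ in place of the independence used there.

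First I would use Lemma \ref{lem:coarse} (with the $T_{2k}$ as breakpoints) to bound $Z^{\go,\gb}_{[t,\mathbf{T}_1],-\gb+u}$ by a product over the visited double-blocks of local factors. By Lemma \ref{lemZbloc}, the local factor on $(T_{2k-2},T_{2k}]$ is at most $1$ when the attractive sub-block has length $\xi_{2k-1}<\mathbf{c}_1\gb u^{-1}$, and at most $e^{u\xi_{2k-1}}$ otherwise. Hence only ``large'' attractive sub-blocks can produce a non-trivial factor, and the problem reduces to bounding a renewal-type sum weighted by $\K(y+t)/\bar\K(y)$ and by these rare energetic contributions.

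For the atypical case $\mathbf{T}_1\geq 4\bbE[\mathbf{T}_1]$, the cleanest route is to drop all the structure of $\go$ and use $Z^{\go,\gb}_{[t,\mathbf{T}_1],-\gb+u}\leq Z^{\rm pur}_{\mathbf{T}_1-t,u}\leq C e^{\tf(u)(\mathbf{T}_1-t)}$. Summing the renewal weights $\K(y+t)/\bar\K(y)$ against this exponential, and using $\bar\K(y+\mathbf{T}_1)/\bar\K(y)\leq 1$, yields a bound of the form $C'e^{c\tf(u)\mathbf{T}_1}$. Since $\gb^{1-\nu^{\rm pur}}\geq 1$ for $\gb\in(0,1)$ and $\nu^{\rm pur}\geq 1$, the desired prefactor can be absorbed for free into the exponent by enlarging the constant, giving \eqref{boundZeasy}.

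The hard step is the typical case $\mathbf{T}_1<4\bbE[\mathbf{T}_1]$, where the bound must be sharpened to $\leq 1$. The scale $L(u)=\lfloor \mathbf{c}\gb^{\nu^{\rm pur}}\tf(u)^{-1}\rfloor$ is chosen precisely so that the typical entropic cost carried by the renewal on a trajectory of length $\mathbf{T}_1\sim L\bbE[T_2]$ is of the same order as the typical energetic gain that can be accumulated from large attractive sub-blocks in such a window. Summing carefully over the positions $t$ of the first renewal return after $y$ and over the locations and sizes of the large attractive sub-blocks, the resulting expression can be made strictly less than $1/2$ by choosing $\mathbf{c}$ small enough, uniformly in $\gb\in(0,1)$ and $u\in(0,c_1\gb)$; the remaining $1/2$ accommodates the term $\bar\K(y+\mathbf{T}_1)/\bar\K(y)\leq 1$. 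The main obstacle is this final bookkeeping: one must balance the polynomial renewal decay $\K(y+t)/\bar\K(y)\sim t^{-(1+\ga)}$ against the multiplicative energetic contributions $e^{u\xi_{2k-1}}$ of rare large attractive sub-blocks, while preserving uniformity of every constant in $\gb$. This is the key adaptation of \cite[Lem.~5.3]{BLpin} to the present setting.
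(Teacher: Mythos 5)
Your treatment of the easy case $\mathbf{T}_1\geq 4\bbE[\mathbf{T}_1]$ contains a genuine gap. The bound $Z^{\rm pur}_{n,u}\leq C e^{n\tf(u)}$ with a constant $C$ uniform in $u$ is false: the free homogeneous partition function carries a prefactor that diverges as $u\searrow 0$ (of order $\tf(u)^{-(1-\ga)}$ when $\ga<1$), and in the regime $\mathbf{T}_1\asymp L(u)\asymp \gb^{\nu^{\rm pur}}\tf(u)^{-1}$ the target exponent $\gb^{1-\nu^{\rm pur}}\tf(u)\mathbf{T}_1$ can be as small as a constant times $\gb$, so not even a fixed prefactor $C'>1$ can be ``absorbed for free'' uniformly in $\gb$, let alone a $u$-dependent one. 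The factor $\gb^{1-\nu^{\rm pur}}\geq 1$ in \eqref{boundZeasy} is not slack to be discarded: it is exactly the price paid by the second bullet of Lemma \ref{lembasic} (applied with $\epsilon=\mathbf{c}\gb^{1/\ga}$), which controls $Z^{\rm pur}_{n,u}$ for $n\geq\epsilon\tf(u)^{-1}$ with no multiplicative prefactor at the cost of inflating the rate from $\tf(u)$ to $c\epsilon^{\ga-1}\tf(u)$. That is the paper's route, after observing that the weights $\K(y+t)/\bar\K(y)$ together with $\bar\K(y+\mathbf{T}_1)/\bar\K(y)$ sum to $1$, so the bracket is a weighted average bounded by $\max_t Z^{\go,\gb}_{[t,\mathbf{T}_1],-\gb+u}\leq Z^{\rm pur}_{\mathbf{T}_1,u}$.

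The hard case is also not established by your plan. Re-running Lemma \ref{lem:coarse} internally over the double-blocks $(T_{2k-2},T_{2k}]$ and invoking Lemma \ref{lemZbloc} merely reproduces the rough bound \eqref{simpleboundF}: each local factor is either $\leq 1$ or $\leq e^{u\xi_{2k-1}}$, so the product is $\leq 1$ only if no attractive sub-block exceeds $\mathbf{c}_1\gb u^{-1}$. But $\mathbf{T}_1<4\bbE[\mathbf{T}_1]$ is perfectly compatible with the presence of such a block (for instance a single attractive block of length comparable to $\bbE[T_2]L(u)\asymp\gb^{\nu^{\rm pur}}\tf(u)^{-1}$, which for $\ga<1$ and $u$ small is much larger than $\gb u^{-1}$), and then your product bound is large rather than $\leq 1$. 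The two inputs that actually make the hard case work, and that are absent from your sketch, are: (i) the whole window has length at most $4\bbE[T_2]L(u)$, a small multiple $\mathbf{c}\gb^{\nu^{\rm pur}}$ of the pure correlation length, so the first bullet of Lemma \ref{lembasic} bounds the pure partition function on it by $1+c\mathbf{c}^{\ga}\gb$ no matter how the $+1$'s are arranged; and (ii) for starting points $x\leq L/2$ the segment $[x,\mathbf{T}_1]$ contains a positive density $\varphi=(16\bbE[T_2])^{-1}$ of $-1$'s, and Lemma \ref{lememprunte} forces the renewal to hit one with probability at least $c\varphi^{1+\ga}$, producing a loss $c'\varphi^{1+\ga}(1-e^{-\gb})$ of order $\gb$ that beats the gain $c\mathbf{c}^{\ga}\gb$ once $\mathbf{c}$ is chosen small. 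Your ``final bookkeeping'' is where the entire proof lives, and the mechanism you propose (balancing the jump entropy $t^{-(1+\ga)}$ against the factors $e^{u\xi}$) is not the one that yields the strict bound $\leq 1$ here.
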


Inequality \eqref{upsimpleF} then yields 
\begin{equation}
\tf(\gb,-\gb+u) \leq c\gb^{1-\nu^{\rm pur}} \tf(u) 
   \bbE\left[  \frac{\mathbf{T}_1}{\bbE[\mathbf{T}_1]}  \ind_{\{\mathbf{T}_1\geq 4 \bbE[\mathbf{T}_1]\}} \right].
\end{equation}
which is exactly \eqref{upboundF}, once one recalled that $\mathbf{T}_1=T_{L(u)}$,
so that $\bbE[\mathbf{T_1}]$ is of order  $L(u)$.

\begin{proof}

To prove \eqref{boundZeasy}, one just uses the trivial
bound $ Z_{[t,\mathbf{T}_{1}],-\gb+u}^{\go,\gb}
\leq Z_{\mathbf{T_1},u}^{\rm pur}$.
Then, since $\mathbf{T}_1\geq   L(u) \geq \mathbf{c}\gb^{1/\ga} \tf(u)^{-1}$, one uses Lemma \ref{lembasic} ($n=\mathbf{T}_1$, $h=u$, and $\epsilon = \mathbf{c}\gb^{1/\ga}$  is small if $\mathbf{c}$ is small) to get that
$Z_{\mathbf{T_1},u}^{\rm pur}\leq e^{c \gb^{(\alpha-1)/\ga}\tf(u) \mathbf{T}_1 }$ for all $u\in[0,c_1 \gb)$ with suitable constants $c$ and $c_1$.

We now focus on the proof of \eqref{boundZhard}.
We only give the steps of the proof for the sake of completeness,
 since the proof is identical to the one done in \cite[Sec. 5.1]{BLpin}.
Loosely speaking, it relies on the same idea as the previous Section: if the block
$\mathbf{T}_{1}$ is smaller than $4\bbE[\mathbf{T}_1]$, it means that it contains $-1$'s
that are close one to another, and the entropic cost of avoiding them is too high
compared to the energetic gain one has on this segment. 
 
We recall Lemma 4.5 in \cite{BLpin}, that estimates the probability of avoiding a large region in the environment, whatever its shape is.
\begin{lemma}
\label{lememprunte}
There exists some constant $c>0$ such that for any $M>0$, $\varphi>0$,
if one takes $\cS$ a subset of $[1,M]$ of cardinality at least $\varphi M$, one has
\begin{equation}
\bP(\tau\cap \cS \neq \emptyset) \geq c \varphi^{1+\ga}.
\end{equation} 
\end{lemma}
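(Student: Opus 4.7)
The plan is to apply the Paley--Zygmund second moment inequality to $N_\cS := \#(\tau \cap \cS)$, using the renewal mass function $u(n):=\bP(n\in\tau)$. Under Assumption~\ref{assumpK}, Doney's renewal estimates \cite{Doney} yield $u(n) \sim c_{\ga}\, n^{\ga - 1}$ for $\ga \in (0,1)$ and $u(n) \to 1/\bE[\tau_1]$ for $\ga > 1$; either way one obtains, after the standard aperiodic reduction, two-sided bounds $c_0\, n^{(\ga-1)\wedge 0} \le u(n) \le C_0\, n^{(\ga-1)\wedge 0}$ uniform in $n\ge 1$.

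For the first moment, since $n\mapsto n^{(\ga-1)\wedge 0}$ is non-increasing and $n\le M$ on $\cS$,
\[\bE[N_\cS] \;=\; \sum_{n\in\cS} u(n) \;\ge\; c_0\,|\cS|\,M^{(\ga-1)\wedge 0} \;\ge\; c_0\,\varphi\,M^{\ga\wedge 1}.\]
For the second moment, the renewal/Markov property gives $\bP(n,m\in\tau)=u(n)\,u(m-n)$ when $n<m$, so
\[\bE[N_\cS^2] \;=\; \bE[N_\cS] + 2\!\!\sumtwo{n,m\in\cS}{n<m}\! u(n)\,u(m-n) \;\le\; \bE[N_\cS]\Bigl(1+2\sum_{k=1}^{M}u(k)\Bigr) \;\le\; C_1\,\bE[N_\cS]\,M^{\ga\wedge 1},\]
the last step using $\sum_{k=1}^M u(k)\le C\, M^{\ga\wedge 1}$, another direct consequence of the bound on $u$. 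Paley--Zygmund then yields
\[\bP(\tau\cap\cS\neq\emptyset)=\bP(N_\cS\ge 1)\;\ge\; \frac{(\bE[N_\cS])^2}{\bE[N_\cS^2]}\;\ge\; \frac{\bE[N_\cS]}{C_1\, M^{\ga\wedge 1}}\;\ge\; c\,\varphi,\]
which, since $\varphi\le 1$ and $\ga>0$, is actually \emph{stronger} than the claimed $c\,\varphi^{1+\ga}$ (up to adjusting the constant).

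The main obstacle will be ensuring the renewal-mass estimate for all $n\ge 1$ rather than only asymptotically: the finitely many small values of $n$ can be absorbed into the constants $c_0,C_0$, and potential periodicity of $\K$ is handled by passing to the aperiodic sublattice. If one prefers an argument that directly produces the exponent $1+\ga$, when $M\le 1/\varphi$ one simply targets the first jump: the smallest element $m_1$ of $\cS$ lies in $[1,M]$ and $\bP(\tau_1\in\cS)\ge c\sum_{m\in\cS} m^{-(1+\ga)}\ge c\,\varphi\, M^{-\ga}\ge c\,\varphi^{1+\ga}$. The complementary regime $M\ge 1/\varphi$ is then reduced to this one by a pigeonhole argument locating a sub-portion of $\cS$ of density $\ge c\varphi$ in a subinterval of length $O(1/\varphi)$; however the Paley--Zygmund route above appears cleaner and is moreover slightly sharper.
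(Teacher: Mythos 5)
This lemma is not proved in the paper at all: it is imported verbatim from \cite{BLpin} (Lemma 4.5 there), so there is no internal proof to compare against, and your second-moment argument stands as a self-contained alternative. The main line is correct: the renewal factorization $\bP(n,m\in\tau)=u(n)u(m-n)$ for $n<m$, the bounds $\bE[N_\cS]\geq c_0\varphi M^{\ga\wedge 1}$ and $\bE[N_\cS^2]\leq C_1 \bE[N_\cS] M^{\ga\wedge 1}$, and the second-moment conclusion $\bP(\tau\cap\cS\neq\emptyset)\geq c\varphi$ all check out, and since necessarily $\varphi\leq 1$ this is indeed stronger than the claimed $c\varphi^{1+\ga}$. (For $\ga>1$ the statement is in fact trivial once $u$ is bounded below by a positive constant, since hitting any single point of $\cS$ already costs only a constant.) What this buys over the hands-on targeting argument of \cite{BLpin} is both brevity and a better exponent; what it costs is reliance on the local renewal estimates of Doney, which the paper however already invokes elsewhere (for $\bP(n\in\tau)\sim cst.\, n^{\ga-1}$), so nothing new is being assumed.

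The one step you should not wave away is the uniform lower bound $u(n)\geq c_0\, n^{(\ga-1)\wedge 0}$ for \emph{all} $n\geq 1$. Assumption \ref{assumpK} forces aperiodicity, so the asymptotic lower bound holds for $n\geq n_0$; but it permits $\K(1)=0$, in which case $u(1)=0$ and ``absorbing finitely many small $n$ into the constants'' does not work. This is really a defect of the lemma as stated (take $\cS=\{1\}$, $M=\varphi=1$ with $\K(1)=0$) rather than of your proof, and it disappears in the only regime where the lemma is used (fixed $\varphi$ and $M\to\infty$, where at least half of $\cS$ lies beyond $n_0$ and the argument runs with $\varphi/2$); still, you should either add the hypothesis $\K(1)>0$ or restrict to $M\geq M_0(\varphi)$ and say so explicitly.
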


From this Lemma, one is able to estimate the contribution of the different terms
$Z_{[x,\mathbf{T}_1],-\gb+u}^{\go,\gb}$, in the case $\mathbf{T}_1\leq 4 \bbE[\mathbf{T_1}] \leq  4\bbE[T_2]L(u)$.

If $x>L/2$, we use the trivial bound
$Z_{[x,\mathbf{T}_1],-\gb+u}^{\go,\gb}\leq Z_{4\bbE[\mathbf{T}_1], u}^{\rm pur}$.
Since $n=4\bbE[\mathbf{T}_1] \leq c L(u)$, with $L(u)\leq \mathbf{c} \gb^{1/\ga} \tf(u)^{-1}$,
then Lemma \ref{lembasic} gives that there is a constant $c_1$ such that for all $u\in[0,c_1 \gb)$
one has
$Z_{4\bbE[\mathbf{T}_1], u}^{\rm pur} \leq 1+\mathtt{c}\gb $,
where $\mathtt{c}= c \mathbf{c}^{\ga}$ for some constant $c$ ($\mathbf{c}$ being the constant entering in the definition of $L(u)$).
In the end one gets
\begin{equation}
\label{borne1}
\max_{x\in(L/2,\mathbf{T}_1]} Z_{[x,\mathbf{T}_1],-\gb+u}^{\go,\gb} \leq 1+\mathtt{c} \gb.
\end{equation}

If $x\leq L/2\leq T_{L/2}$, as $\mathbf{T}_1=T_{L}$, there are at least $L/4 $ $-1$'s
in the segment $[x,\mathbf{T}_1]$, so that the proportion of $-1$'s is at least
$\frac{L}{4\mathbf{T}_1} \geq (16\bbE[T_2])^{-1}$ (since $\mathbf{T}_1\leq 4 \bbE[T_2] L$).
Take $\varphi=(16\bbE[T_2])^{-1}$,
and $\cS:= \{n\in [1,\mathbf{T}_1-x]\ /\ \go_{x+n} = -1 \}$,
so that applying Lemma \ref{lememprunte}, one has
\begin{multline}
e^{-h}Z_{[x,\mathbf{T}_1],-\gb+u}^{\go,\gb} \leq
  \bE\left[  e^{\sum_{n=1}^{\mathbf{T}_1-x} u \ind_{n\in\tau} } \ind_{\{\tau\cap \cS =\emptyset\}}  \right]
   + e^{-2\gb} \bE\left[  e^{\sum_{n=1}^{\mathbf{T}_1-x} u \ind_{n\in\tau} } \ind_{\{\tau\cap \cS \neq\emptyset\}}  \right]\\
\leq Z_{\mathbf{T}_1-x, u}^{\rm pur} -(1-e^{-2\gb}) \bP(\tau\cap \cS \neq \emptyset)
  \leq  1+\mathtt{c} \gb- c' \varphi^{1+\ga} (1-e^{-\gb}). 
\end{multline}
If the constant $\mathtt{c}=c \mathbf{c}^{\ga}$ is small enough (taking $\mathbf{c}$ in the definition $L(u)$ small),
one has $\mathtt{c} \gb\leq \frac{c'}{2}\varphi^{1+\ga} (1-e^{-\gb})$ for all $\gb\in(0,1)$.
In the end one obtains, for all $\gb\in(0,1)$
\begin{equation}
\label{borne2}
\max_{x\in[0,L/2]} Z_{[x,\mathbf{T}_1],-\gb+u}^{\go,\gb} \leq 1-\frac{c'}{2}\varphi^{1+\ga} (1-e^{-\gb}) \leq 1-2C\gb,
\end{equation}
where the constant $C $ is independent of how small $\mathbf{c}$ is.

Thanks to \eqref{borne1}-\eqref{borne2}
(which are actually Lemma 4.4 in \cite{BLpin}), we have
\begin{multline}
\sum_{t=1}^{\mathbf{T}_1} \frac{\K(y+t)}{\bar \K(y)} 
   Z_{[t,\mathbf{T}_{1}],-\gb+u}^{\go,\gb}
      \leq 
     ( 1-2C\gb) \sum_{t=1}^{L/2} \frac{\K(y+t)}{\bar \K(y)}
        + (1+\mathtt{c}\gb)\sum_{t=L/2}^{\mathbf{T}_1} \frac{\K(y+t)}{\bar \K(y)}\\
        \leq \sum_{t=1}^{\mathbf{T}_1} \frac{\K(y+t)}{\bar \K(y)}  -2C\gb \sum_{t=1}^{L/2} \frac{\K(y+t)}{\bar \K(y)} + \mathtt{c}\gb\sum_{t=1}^{\mathbf{T}_1} \frac{\K(y+t)}{\bar \K(y)}.
\end{multline}
Then, using that $\mathbf{T}_1\leq 4\bbE[T_2] L$
$$\frac{\sum_{t=1}^{L/2} \K(y+t)}{ \sum_{t=1}^{\mathbf{T}_1} \K(y+t)} \geq \frac{\sum_{t=1}^{L/2} \K(y+t)}{ \sum_{t=L/2}^{4\bbE[T_2] L} \K(y+t)} $$
is bounded away from $0$, uniformly in $y$ and $L$. Therefore, if $\mathtt{c}$ is small enough (recall that the value of $C$ is independent from that of $\mathtt{c}$), we have
\begin{equation}
\sum_{t=1}^{\mathbf{T}_1} \frac{\K(y+t)}{\bar \K(y)} 
   Z_{[t,\mathbf{T}_{1}],-\gb+u}^{\go,\gb} \leq \sum_{t=1}^{\mathbf{T}_1} \frac{\K(y+t)}{\bar \K(y)},
\end{equation} 
which gives \eqref{boundZhard}.
\end{proof}

\subsection{On the way to getting a sharper upper bound}
\label{sec:conj}

In both of the previous cases, the coarse-graining procedure
only singled out some favorable stretches, but did not keep track of the cost
from avoiding the repulsing ones.
Therefore, the bounds \eqref{upboundF}-\eqref{upsimpleF}
on the free energy are really rough, and yet sufficient to our purpose.
We present now a procedure to get better bounds on the free energy,
but since the bounds obtained are
hard to be estimated (but may be computed in some particular cases),
we only give a sketch of the method.
The idea is  to use a refined multiscale coarse graining,
dividing the system into larger and larger meta-blocks.
We give a sketch of the proof for the first step of improvement, which follows the idea
of Section 3.2 and 4.2 in \cite{BLpin},
and we explain how one could iterate the procedure.
We then give a heuristic justification of Conjecture \ref{conj}.

\smallskip
{\bf First step of improvement}
We
enlarge the scale of the coarse-graining procedure of Section \ref{sec:upbound}:
define $\mathbf{T}_k^{(1)}:=\cT_k(A_1)$
where $A_1=A_1(u):=\mathbf{c}_1\gb u^{-1}$ is the threshold appearing in Lemma \ref{lemZbloc}.
One then divides the environment into meta-blocks $(\mathbf{T}_k^{(1)}, \mathbf{T}_{k+1}^{(1)}]$.
For each meta-block, there is a certain (random)
number of segments $(T_i, T_{i+1}]$ smaller than $A_1$,
that, in view of Lemma \ref{lemZbloc}, are repulsive (in the sense that the gain on
such a segment is smaller than $1$),
and then there is one "favorable" ending $A_1$-{\sl block}.

Then, one only has to estimate
the partition function on the first meta-block, more precisely all the 
$Z_{[x,\mathbf{T}_1^{(1)}],-\gb+u}^{\go,\gb}$ (remember \eqref{upgeneralF}).
We now call upon Lemma 4.6 in \cite{BLpin}:
it uses
a coarse-graining argument to
keep track that trajectories "avoid" repulsive regions, and jumps directly to the last $A_1$-{\sl block},
the attractive one.
In particular, denoting $k_1=k_1(u)$ the index of the first $A_1$-{\sl block},
so that $\mathbf{T}_1^{(1)}=T_{k_1}$, one obtains
that there exists a constant $\mathrm{C}_1>0$ such that
\begin{equation}
\label{ineqstep1}
Z_{\mathbf{T}_1^{(1)},-\gb+u}^{\go,\gb} \leq e^{u\xi_{k_1}} 
   T_{k_1}^{-\mathrm{C}_1} .
\end{equation}

Then, one has $Z_{\mathbf{T}_1^{(1)},-\gb+u}^{\go,\gb} \! \! >1$ only if
$u\xi_{k_1} \geq \mathrm{c}_1 \log ( 1+T_{k_1-1} )$,
or more easily $u\xi_{k_1} \geq \mathrm{c}_1 \log k_1 $.
One would therefore have
\begin{equation}
\label{upFstep1}
\tf(\gb,-\gb+u)
         \leq c u\bbP\big(\xi_1\geq A_1(u)\big)
            \bbE\left[ \xi_{k_1}\ind_{\{\xi_{k_1} \geq \mathrm{c}_1 u^{-1} \log  k_1 \}}  \right]
\end{equation}
where we used that $\bbE[\mathbf{T}_1^{(1)}]=\bbE[\cT_1(A_1)]= c' \bbP(\xi_1\geq A_1(u))^{-1} $.

\begin{rem}\rm
This presumably gives a sharper upper bound
to the free energy than \eqref{upboundF} (or \eqref{roughF})
since the condition $\left\{\xi_{k_1} \geq \mathrm{c}_1 u^{-1} \log k_1\right\}$
is restrictive. The difficulty is actually to
estimate the influence of this condition in the case of a very general environment.
If the sizes of the blocks were independent (which is the case in \cite{BLpin}),
then the integer $k_1$ is a geometric random variable
of parameter $\bbP(\xi_1\geq A_1)$,
and one is able to estimate precisely the upper bound \eqref{upFstep1}.
This gives for example \cite[Th. 2.1]{BLpin}.
\end{rem}

\begin{rem}\rm
The method sketched here is valid for any $\ga>0$: it relies
on Lemma \ref{lemZbloc} to identify repulsive and attractive blocks $(T_i,T_{i+1}]$.
However, in the case $\ga<1$, one can  identify
repulsive and attractive segments $(\mathbf{T}_i,\mathbf{T}_{i+1}]$,
thanks to Lemma \ref{lemZbloc2} (with $\mathbf{T}_i$ defined in \eqref{defTbolt}),
and then obtain a refined bound.
\end{rem}

\smallskip
{\bf Iterating the procedure.}
The above reasoning tells, among
the segments $(\mathbf{T}_{k}^{(1)},\mathbf{T}_{k+1}^{(1)}]$,
which are the favorable ones
($\xi_{k_1}\geq \mathrm{c}_1 u^{-1}\log k_1$, as far as the first block is concerned)
and which are the unfavorable ones ($\xi_{k_1}< \mathrm{c}_1 u^{-1} \log k_1$).
One is then able to repeat the procedure presented above at larger scales:

 \textbullet\  construct $\big(\mathbf{T}_k^{(2)},\mathbf{T}_{k+1}^{(2)}\big]$,
  composed of (many) blocks $\big(\mathbf{T}_j^{(1)},\mathbf{T}_{j+1}^{(1)}\big]$
  identified  as \emph{globally repulsive},
  and then one \emph{globally attractive} ending block;
  
 \textbullet\  identify when the blocks $\big(\mathbf{T}_k^{(2)},\mathbf{T}_{k+1}^{(2)}\big]$
 are favorable or unfavorable (justifying
  that the main contribution comes from trajectories that jump over the
  first repulsive blocks $\big(\mathbf{T}_j^{(1)},\mathbf{T}_{j+1}^{(1)}\big]$,
   and aim directly at the last, favorable one,
  to obtain an analogue of \eqref{ineqstep1});
  
  \textbullet\ repeat the argument to enlarge the scale...

\smallskip
In the end, one obtains sharper and sharper bounds on the free energy,
the difficulty being to give simple estimates of the bounds.

\smallskip

{\bf Heuristic justification of the Conjecture \ref{conj}.}
One realizes that, after $j$ steps of improvements,
the main contribution in the first block $(0,\mathbf{T}_1^{(j)}]$ 
comes from trajectories aiming directly at its last  $A$-\textit{block}.
Then, after $j$ steps, if $k_1^{(j)}$ is such that $\mathbf{T}_1^{(j)}=T_{k_1^{(j)}}$, the first block is considered
attractive if $u\xi_{k_1^{(j)}}\geq  \mathrm{c} \log T_{k_1^{(j)}}$.
This consideration meets the definition \eqref{defAgo}:
$A_{\mathrm{c} u}^{\go}$ is the first length $\xi_k$
 such that $u\xi_k \geq \mathrm{c} \log T_k $,
and the segment $(0,\cT_1(A_{\mathrm{c} u}^{\go})]$ is the first one to be favorable (with main contribution coming from trajectories aiming directly at the last $A$-\textit{block}).

If $A_{\mathrm{c} u}^{\go}<+\infty$, there is some step $j<+\infty$
such that the first block $(0,\mathbf{T}_1^{(j)}]$ is attractive: this justifies that the localization strategy sketched in Section \ref{sec:bounds} should be the right one. 
On the other hand, if $A_{\mathrm{c} u_0}^{\go}=+\infty$, then
the first segment $(0,\mathbf{T}_1^{(j)}]$ remains unfavorable after any step $j$ (with $\mathbf{T}_1^{(j)}$
going to infinity as $j$ goes to infinity).
It would confirm that, in absence of infinite disorder
(recall its different characterizations, in particular in terms of $A_{\gd}^{\go}$, see Lemma \ref{lem:charactstrong}), there should be
some $u_0$ sufficiently small such that
$\liminf_{n\to\infty} Z_{n,-\gb+u_0}^{\go,\gb}\leq 1$, so that $\tf(\gb,-\gb+u_0)=0$ and $h_c(\gb)\geq-\gb+u_0$.

\medskip

{\bf Acknowledgment.}
 We deeply thank F.L. Toninelli for his constant support in this project,
 and his numerous and precious advices and proof readings, as well as
 H. Lacoin who was of great help on the manuscript.
 This work was initiated during the author's doctorate at the Physics Department of \'Ecole Normale
 Sup\'erieure de Lyon, and its hospitality and support is gratefully acknowledged.

\begin{appendix}
\section{On the sign of Gaussian sequences}
\label{app}

Let $\W=\{\W_n\}_{n\in\N}$ be a stationary Gaussian process, centered and with unitary variance,
and with covariance matrix denoted by $\gU$, the covariance function being $\rho_k = \gU_{i,i+k}$.
We also denote $\gU_l$ the covariance matrix of the vector $(\W_1,\ldots,\W_l)$, which is just a restriction
of $\gU$.
We recall Assumption \eqref{assumpgauss}:
correlations are non-negative, and power-law decaying,
\begin{equation}
 \rho_{k}\stackrel{k\to\infty}{ \sim} c  k^{- a}, \quad \text{for some } a>0 \text{ and } c>0.
\end{equation}

\begin{proof}{Proof of Proposition \ref{app:gauss}}

We recall here
a more general lower bound, dealing with the probability for a 
Gaussian vector to be componentwise larger than some given value.
\begin{lemma}[\cite{Bcorrel}, Lemma A.3]
Under assumption \eqref{assumpgauss} with $a<1$,
there exist two constants $c,C>0$ such that for every $l\in\N$, one has
\begin{equation}
 \bbP\left( \forall i\in\{1,\ldots,l\},\ \W_i\geq A \right)\geq
      c^{-1}\, \exp\left( -c \big( A \vee C\sqrt{\log l} \big) ^2  l^{ a} \right).
\end{equation}
\label{lem:shiftgauss}
\end{lemma}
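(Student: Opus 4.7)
The plan is to apply a Cameron--Martin change of measure to the Gaussian vector $(\W_1,\ldots,\W_l)$, shifting its mean by a constant vector $\mathbf{m} = m\mathbf{1}$ with $m := A + C_0\sqrt{\log l}$ for a suitably large constant $C_0$. The Girsanov identity then reads
\[
\bbP(\W_i\geq A,\,\forall i) \;=\; e^{-\frac{1}{2} m^{2}\,\mathbf{1}^{T}\gU_l^{-1}\mathbf{1}}\,
\bbE\!\left[e^{-m\,\mathbf{1}^{T}\gU_l^{-1}\W}\,\mathbf{1}_{\{\W_i \,\ge\, -C_0\sqrt{\log l},\,\forall i\}}\right],
\]
where on the right $\W\sim\mathcal{N}(0,\gU_l)$. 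The goal is to bound the expectation below by a constant, and then to show that the quadratic cost $\mathbf{1}^{T}\gU_l^{-1}\mathbf{1}$ grows only like $l^{a}$; combining these two facts with $(A+C_0\sqrt{\log l})^{2}\le 2(A\vee C_{0}\sqrt{\log l})^{2}$ produces the announced bound.

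To control the expectation, I would first observe, by a union bound and the Gaussian tail inequality $\bbP(\W_1<-t)\le e^{-t^{2}/2}$, that $\bbP(\W_i\ge -C_0\sqrt{\log l},\,\forall i) \ge 1 - l^{1-C_0^{2}/2} \ge 3/4$ once $C_0\ge 2$ and $l$ is large enough. Next, let $Z := \mathbf{1}^{T}\gU_l^{-1}\W$, a centered Gaussian under $\mathcal{N}(0,\gU_l)$; on the event $\{Z\le 0\}$ the exponential factor $e^{-mZ}$ is at least $1$. Since $\bbP(Z\le 0)=1/2$ by symmetry, the inclusion--exclusion bound $\bbP(A\cap B)\ge\bbP(A)+\bbP(B)-1$ gives
\[
\bbE\!\left[e^{-mZ}\mathbf{1}_{\{\W_i\ge -C_0\sqrt{\log l},\,\forall i\}}\right] \;\ge\; \bbP\bigl(\{Z\le 0\}\cap\{\W_i\ge -C_0\sqrt{\log l},\,\forall i\}\bigr) \;\ge\; \tfrac{1}{4}.
\]

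The crucial ingredient is then the bound $\mathbf{1}^{T}\gU_l^{-1}\mathbf{1} \le \mathsf{C}\, l^{a}$. I would derive this via the variational identity $\mathbf{1}^{T}\gU_l^{-1}\mathbf{1} = \bigl(\min_{\mathbf{u}:\,\mathbf{1}^{T}\mathbf{u}=1}\mathbf{u}^{T}\gU_l\mathbf{u}\bigr)^{-1}$, rewriting the quadratic form through the spectral density $f(\theta)=\sum_k \rho_k e^{ik\theta}$:
\[
\mathbf{u}^{T}\gU_l\mathbf{u} \;=\; \frac{1}{2\pi}\int_{-\pi}^{\pi} f(\theta)\,|\hat u(\theta)|^{2}\,\dd\theta.
\]
Under \eqref{assumpgauss} with $a<1$ one has $f(\theta)\sim c_a|\theta|^{a-1}$ as $\theta\to 0$. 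A trigonometric polynomial $P=\hat u$ of degree at most $l$ with $P(0)=1$ satisfies, by a Bernstein--Markov type estimate, $|P(\theta)|\ge 1/2$ on an interval of length $\asymp 1/l$ centred at $0$; since $\int_{|\theta|\le c/l}|\theta|^{a-1}\dd\theta \asymp l^{-a}$, this yields $\mathbf{u}^{T}\gU_l\mathbf{u} \ge c\, l^{-a}$ for every such $\mathbf{u}$, hence the desired upper bound on $\mathbf{1}^{T}\gU_l^{-1}\mathbf{1}$.

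The main obstacle is precisely this spectral step: it is where the hypothesis $a<1$ and the sharp asymptotics $\rho_k\sim ck^{-a}$ enter, reflecting the heuristic that a length-$l$ window of such a long-range correlated Gaussian sequence contains of order $l^{a}$ ``effectively independent'' samples. The two other ingredients (the union bound and the Girsanov manipulation with the symmetry of $Z$) are routine. Putting them together one obtains
\[
\bbP(\W_i\ge A,\,\forall i) \;\ge\; \tfrac{1}{4}\exp\!\Bigl(-\tfrac{\mathsf{C}}{2}(A+C_0\sqrt{\log l})^{2}\,l^{a}\Bigr),
\]
which rearranges into the form stated in Lemma \ref{lem:shiftgauss}; the few small values of $l$ for which the union bound fails can be absorbed into the constant.
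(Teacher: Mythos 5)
Note first that the paper does not prove this lemma itself (it is quoted from \cite[Lem.~A.3]{Bcorrel}), so your attempt has to be judged against what a complete proof requires. Your overall strategy --- a Cameron--Martin tilt followed by a constant lower bound on the residual expectation --- is the right one, and your treatment of the expectation (union bound for the event $\{\W_i\ge -C_0\sqrt{\log l}\}$, symmetry of $Z$, inclusion--exclusion) is correct. The genuine gap is in the step you yourself flag as the main obstacle, namely $\mathbf{1}^T\gU_l^{-1}\mathbf{1}\le \mathsf{C}\,l^{a}$. First, the Bernstein--Markov claim is false: a trigonometric polynomial $P=\hat u$ of degree at most $l$ with $P(0)=1$ need \emph{not} satisfy $|P|\ge 1/2$ on an interval of length $\asymp 1/l$ around $0$; since the coefficients $u_j$ are constrained only by $\sum_j u_j=1$ (not in sign or size), one can force $P(\theta_0)=0$ for $\theta_0$ arbitrarily close to $0$ at the price of making $P$ large elsewhere. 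Second, even repairing this via the correct duality $\mathbf{1}^T\gU_l^{-1}\mathbf{1}\le \frac{1}{2\pi}\int_{-\pi}^{\pi}|D_l(\theta)|^2 f(\theta)^{-1}\dd\theta$ (with $D_l$ the Dirichlet-type kernel), one then needs a lower bound $f(\theta)\ge c|\theta|^{a-1}$ on \emph{all} of $[-\pi,\pi]$, not just near $\theta=0$; neither this nor the pointwise asymptotics $f(\theta)\sim c_a|\theta|^{a-1}$ follows from \eqref{assumpgauss} alone, which does not even guarantee that the spectral measure has a density. So the crucial quantitative step is not established, and the route through $\gU_l^{-1}$ and the spectral density is the wrong one.

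The fix is to choose a smarter shift direction, which avoids the inverse covariance entirely. Instead of shifting the mean by $m\mathbf{1}$ (cost $\tfrac12 m^2\,\mathbf{1}^T\gU_l^{-1}\mathbf{1}$), shift by $s:=m\lambda\,\gU_l\mathbf{1}$ with $\lambda:=\big(\min_i(\gU_l\mathbf{1})_i\big)^{-1}$. Since $\rho_k\ge0$ and $\rho_k\sim ck^{-a}$ with $a<1$, every row sum $(\gU_l\mathbf{1})_i=\sum_{j}\rho_{|i-j|}$ is at least $c'l^{1-a}$, so $\lambda\le C l^{a-1}$ and $s_i\ge m$ for every $i$, which is all the shifted event requires; the Cameron--Martin cost is then $\tfrac12 s^T\gU_l^{-1}s=\tfrac12 m^2\lambda^2\,\mathbf{1}^T\gU_l\mathbf{1}\le C m^2 l^{2(a-1)}\cdot l^{2-a}=Cm^2l^{a}$, using $\mathbf{1}^T\gU_l\mathbf{1}=\sum_{i,j}\rho_{|i-j|}\le C l^{2-a}$. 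With this replacement the remainder of your argument goes through verbatim and yields the stated bound.
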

Taking $A=0$ in this Lemma gives \eqref{gausslow}.

To simplify notations, we prove the upper bound for the specific sequence $i_k=k$,
$k\in\{1,\ldots,n\}$. The general proof follows the same reasoning.
One first observes that for any subset $\{k_1,\ldots,k_m\}\subset \{1,\ldots,n\}$, $m\in\N$,
one has
\begin{equation}
\label{deleteXi}
 \bbP\left( \W_i\geq 0 \, ;\, \forall i\in \{1,\ldots,n\}\right)\leq
    \bbP\left( \W_{k_j}\geq 0 \, ;\, \forall j\in \{1,\ldots,m\}\right)
\end{equation} 
The idea is that, if the $k_j$'s are sufficiently far one from another,
the Gaussian vector $(\W_{k_1},\ldots,\W_{k_m})$ behaves like
an independent one.

\begin{claim}
$\bullet$ If $ a<1$, then there exists some $A>0$ such that
taking $k_j:=j\lfloor A n^{1- a}\rfloor$ for $j\in\{0,\ldots,m:= \lceil A^{-1} n^{ a} \rceil\}$,
one has some constant $c>0$ such that for all $n\in\bbN$
\begin{equation}
 \bbP\left( \W_{k_j}\geq 0 \, ;\, \forall j\in \{1,\ldots,m\}\right) \leq e^{-c m}.
\end{equation}

$\bullet$ If $ a> 1$, then there exists some integer $A>0$ such that
taking $k_j:=j A$ for $j\in\{0,\ldots,m:= \lceil A^{-1}n \rceil\}$,
one has some constant $c>0$ such that for all $n\in\bbN$
\begin{equation}
 \bbP\left( \W_{k_j}\geq 0 \, ;\, \forall j\in \{1,\ldots,m\}\right) \leq e^{-c m}.
\end{equation}
\end{claim}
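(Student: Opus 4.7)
My plan is to treat both cases via the same strategy: reduce to an orthant-probability bound for a centered Gaussian vector whose covariance matrix $\Sigma := (\rho_{|k_i-k_j|})_{1\leq i,j\leq m}$ is a small perturbation of the identity. The spacings are designed so that the maximal off-diagonal row-sum $\eta(A) := \max_i \sum_{j\neq i}\rho_{|k_i-k_j|}$ can be made arbitrarily small by taking $A$ large, uniformly in $n$. I would first establish this control on $\eta(A)$, then bound the orthant probability by a direct Gaussian density estimate.

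For the row-sum control, in the case $ a>1$ with $k_j=jA$ one has, for $A$ large enough,
\[ \sum_{j\neq i}\rho_{|i-j|A} \;\leq\; 2\sum_{\ell=1}^{\infty}\rho_{\ell A} \;\leq\; c\,A^{- a}\sum_{\ell=1}^{\infty}\ell^{- a}, \]
using $\rho_k\leq c'k^{- a}$ with $ a>1$ so that the series converges; hence $\eta(A)\to 0$ as $A\to\infty$. In the case $ a<1$ with $k_j=jB_n$, $B_n:=\lfloor An^{1- a}\rfloor$ and $m=\lceil A^{-1}n^{ a}\rceil$, the same bound combined with $\sum_{\ell=1}^m\ell^{- a}\leq m^{1- a}/(1- a)$ yields
\[ \sum_{j\neq i}\rho_{|i-j|B_n} \;\leq\; \frac{2c}{1- a}\,B_n^{- a}\,m^{1- a} \;\leq\; \frac{c''}{A}, \]
the exponents of $n$ in $B_n^{- a}\sim A^{- a}n^{- a(1- a)}$ and $m^{1- a}\sim A^{-(1- a)}n^{ a(1- a)}$ cancelling exactly. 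So in both regimes $\eta(A)\to 0$ as $A\to\infty$, uniformly in $n$.

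With this in hand, Gershgorin's disc theorem gives $\lambda_{\mathrm{min}}(\Sigma)\geq 1-\eta$ and $\lambda_{\mathrm{max}}(\Sigma)\leq 1+\eta$, hence $\det\Sigma\geq(1-\eta)^m$ and $x^{T}\Sigma^{-1}x\geq |x|^2/(1+\eta)$ for every $x\in\bbR^m$. The Gaussian density is thus bounded pointwise by $(2\pi(1-\eta))^{-m/2}\exp(-|x|^2/(2(1+\eta)))$, and integrating over $[0,\infty)^m$ as a product of one-dimensional Gaussian integrals gives
\begin{equation*}
\bbP\bigl(\W_{k_j}\geq 0 \, ;\, \forall j\in\{1,\dots,m\}\bigr) \;\leq\; \left(\frac{1+\eta}{4(1-\eta)}\right)^{m/2}.
\end{equation*}
As $\eta\to 0$ the right-hand side tends to $2^{-m}$, recovering the i.i.d.\ orthant bound; for $\eta<3/5$ (arranged by taking $A$ large enough) the base is strictly less than $1$, so one obtains $e^{-cm}$ with $c=-\tfrac{1}{2}\log\bigl(\tfrac{1+\eta}{4(1-\eta)}\bigr)>0$, as claimed.

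The only delicate point is the uniformity in $n$ in the case $ a<1$, which hinges on the exact cancellation of $n$-exponents in $B_n^{- a}m^{1- a}$; beyond this scaling accounting, the argument is routine Gaussian calculus together with the Gershgorin eigenvalue estimate.
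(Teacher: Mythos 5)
Your proof is correct, and its core is the same as the paper's: subsample at spacings chosen so that the maximal off-diagonal row sum $\eta(A)$ of the covariance matrix of $(\W_{k_1},\dots,\W_{k_m})$ is $O(1/A)$ uniformly in $n$ --- including the exact cancellation of the $n$-exponents in $B_n^{-a}m^{1-a}$ when $a<1$ --- which is precisely the computation the paper performs when it bounds the top eigenvalue of $\tilde\gU_m$ by $1+2\sum_{p}\rho_{k_p}$. You differ only in the concluding Gaussian step: the paper applies Cauchy--Schwarz against the law $\hat\bbP$ of an IID standard Gaussian vector, yielding $(1/2)^{m/2}\,\hat\bbE[(\dd\tilde\bbP/\dd\hat\bbP)^2]^{1/2}=(1/2)^{m/2}\det(I-V^2)^{-1/4}$ with $V=\tilde\gU_m-I$, whereas you dominate the density pointwise via $\det\Sigma\geq(1-\eta)^m$ and $x^{T}\Sigma^{-1}x\geq |x|^2/(1+\eta)$ and integrate over the orthant, yielding $\big((1+\eta)/(4(1-\eta))\big)^{m/2}$. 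The two computations are of comparable difficulty and require the same input, namely the two-sided eigenvalue bound $1-\eta\leq\lambda\leq 1+\eta$ (the paper states only the upper bound explicitly but uses the lower one implicitly when asserting that the eigenvalues of $I-V^2$ are at least $3/4$). Your version has the mild advantages of making the Gershgorin input explicit and of degenerating to the exact IID orthant probability $2^{-m}$ as $\eta\to 0$.
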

This claim, together with \eqref{deleteXi}, gives the conclusion. We now prove the claim.

Under $\bbP$, the vector $(\W_{k_1},\ldots,\W_{k_m})$ is a Gaussian vector with covariance matrix
$\tilde \gU_m$, with $\tilde \gU_{ij} = \gU_{k_i,k_j}=\rho_{|k_j-k_i|}$ for $i,j\in\{1,\ldots,m\}$.
We note $\tilde \bbP$ the law of this $m$-dimensional vector.
Then if $\hat \bbP$ denotes the law of a $m$-dimensional
independent standard Gaussian vector $\cN(0,{\rm Id})$,
a change of measure procedure gives thanks to the Cauchy-Schwarz inequality
\begin{equation}
  \tilde \bbP\left( \W_{j}\geq 0 \, ;\, \forall j\in \{1,\ldots,m\}\right)\leq
    \left( \frac12 \right)^{m/2}
       \hat\bbE \left[ \left( \frac{\dd \tilde\bbP}{\dd \hat \bbP}\right)^2\right]^{1/2}.
\label{cauchyschwarz}
\end{equation} 
One has
$\frac{\dd \tilde\bbP}{\dd \hat \bbP}(X) = (\det \tilde \gU_m)^{-1/2} e^{-\frac12 \langle (\tilde \gU_m^{-1}-I)X,X\rangle}$
from the definitions of $\tilde\bbP$ and $\hat \bbP$, so that
from a Gaussian computation, one gets
\begin{equation}
\label{boundchangemes}
 \hat\bbE \left[ \left( \frac{\dd \tilde\bbP}{\dd \hat \bbP}\right)^2\right]^{1/2}
  = (\det \tilde\gU_m)^{-1/2}(\det (2 (\tilde\gU_m)^{-1}-I))^{-1/4} = \det(I-V^2)^{-1/4} 
\end{equation}
where we defined $V:=\tilde\gU_m-I$.

We now estimate $\det(I-V^2)$.
Note that the maximal eigenvalue $\tilde \gl$ of $\tilde\gU_m $ verifies
\begin{equation}
\tilde \gl \leq \max_{i\in\{1,\ldots,n\}} \sum_{j=1}^{n} \tilde\gU_{ij} \leq1+2\sum_{p=1}^{m} \rho_{k_p}.
\end{equation}
Then we use the definition of $k_p$ and $m$, and the assumption \eqref{assumpgauss} on $(\rho_k)_{k\geq 0}$.
We get:

\textbullet\ if $ a<1$ one has
$\tilde \gl \leq 1+c A^{- a} n^{- a(1- a)} m^{1- a} \leq 1+ cA^{-1}$,

\textbullet\  if $ a>1$ one has
$\tilde \gl \leq 1+c A^{- a}$.

In both cases one chooses $A$ large enough so that
$\tilde \lambda \leq 3/2$.
Thus the eigenvalues of $I-V^{2}$ are bounded from below by $ 1-(\tilde \gl-1)^2\geq 3/4$, so that in the end one
has $\det(I-V^2)\geq (3/4)^m$. Combining \eqref{boundchangemes} and \eqref{cauchyschwarz}
one gets
\begin{equation}
 \bar \bbP\left( \W_{j}\geq 0 \, ;\, \forall j\in \{1,\ldots,m\}\right)\leq
    \left( \frac12 \right)^{m/2} \left( \frac34 \right)^{-m/4}\leq 3^{-m/4}. 
\end{equation}
\end{proof}

\section{Estimate on the homogeneous model}
\label{app2}

The following Lemma tells that $n\approx\tf(h)^{-1}$ is the threshold at which
the partition function starts growing exponentially.
\begin{lemma}
\label{lembasic}
We take $\ga\neq 1$. There exist
$\epsilon>0$, and constants $c, c', c''>0$
not depending on $\epsilon$, such that
if $0<h\leq c'' \epsilon ^{\ga\wedge 1} \leq 1$, then 

\textbullet\ one has
$Z_{n,h}^{\rm pur}\leq 1+c (n\tf(h))^{\ga \wedge 1} \leq 1+c \epsilon^{\ga\wedge 1}$
 for every $n\leq\epsilon \tf(h)^{-1}$;

\textbullet\  one has
$Z_{n,h}^{\rm pur} \leq \exp( c' \epsilon ^{\ga \wedge 1 -1} \tf(h) n)$
 for every $n\geq\epsilon \tf(h)^{-1}$.
\end{lemma}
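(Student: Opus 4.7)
The plan is to establish the two bullets in sequence, using the first as a base case for the second. The crucial input from Proposition~\ref{prophomo} is the equivalence $h\asymp\tf(h)^{\ga\wedge 1}$ (up to constants depending only on $K$), which together with the constraint $n\tf(h)\leq\epsilon$ makes $hn^{\ga\wedge 1}\asymp(n\tf(h))^{\ga\wedge 1}\leq C\epsilon^{\ga\wedge 1}$ the natural small parameter of the problem.

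For the first bullet, I start from the identity
\[
Z_{n,h}^{\rm pur}-1=(e^h-1)\sum_{k\geq 1}e^{h(k-1)}\,\P(N_n\geq k),\qquad N_n:=|\tau\cap[1,n]|,
\]
and split the sum at a threshold $k_*:=\lceil C_0\,n^{\ga\wedge 1}\rceil$ reflecting the typical size of $N_n$ (since $\E[N_n]=\sum_{m=1}^n\P(m\in\tau)\asymp n^{\ga\wedge 1}$ by Doney's renewal asymptotics). For $k\leq k_*$ the trivial bound $\P(N_n\geq k)\leq 1$ together with $hk_*\leq C\epsilon^{\ga\wedge 1}$ (which keeps $e^{hk_*}$ of order one) produces a contribution $O((n\tf(h))^{\ga\wedge 1})$. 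For $k>k_*$ I use a tail estimate on $N_n$: when $\ga<1$ the inclusion $\{\tau_k\leq n\}\subset\{X_i\leq n\ \forall\,i\leq k\}$ yields $\P(N_n\geq k)\leq(1-\bar K(n))^k\leq e^{-ck/n^{\ga}}$, and the geometric tail sums to $O(n^{\ga})$ once $\epsilon$ is small enough to ensure $c/n^{\ga}-h\geq c/(2n^{\ga})$; when $\ga>1$ a Chernoff--Markov tilt using $\E[e^{-\tf(h)\tau_1}]=e^{-h}$ gives $\P(\tau_k\leq n)\leq e^{\tf(h)n-hk}$, producing the same $O(n)$ order after truncating below at $k\asymp n/\mu$. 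Multiplying by $(e^h-1)\leq 2h$ concludes this bullet.

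For the second bullet I plan to combine the first estimate with the super-additivity of $\log Z^{\rm pin}_{\cdot,h}$ (established in Section~\ref{sec:proof}), which gives $Z_{n,h}^{\rm pin}\leq e^{\tf(h)n}$, and the renewal identity $Z_{n,h}^{\rm pur}=\sum_{m=0}^n Z_{m,h}^{\rm pin}\bar K(n-m)$ together with the explicit evaluation $\sum_{j\geq 0}e^{-\tf(h)j}\bar K(j)=(1-e^{-h})/(1-e^{-\tf(h)})$. Setting $n_0:=\lfloor\epsilon/\tf(h)\rfloor$ and splitting the $m$-sum there --- using $Z_{m,h}^{\rm pin}\leq Z_{m,h}^{\rm pur}\leq 1+c\epsilon^{\ga\wedge 1}$ from the first bullet for $m\leq n_0$ and the super-additive bound $Z_{m,h}^{\rm pin}\leq e^{\tf(h)m}$ for $m>n_0$ --- I recover a multiplicative-type bound $Z_{n,h}^{\rm pur}\lesssim(1+c\epsilon^{\ga\wedge 1})^{n/n_0}$. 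Since $(1+c\epsilon^{\ga\wedge 1})^{n/n_0}\leq\exp(c\epsilon^{\ga\wedge 1}\cdot n/n_0)$ and $n/n_0=n\tf(h)/\epsilon$, this is exactly the target $\exp(c'\epsilon^{\ga\wedge 1-1}\tf(h)n)$.

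The main obstacle is the case $\ga<1$: there $h/\tf(h)\asymp\tf(h)^{\ga-1}$ diverges as $h\to 0$, so the naive bound coming from the renewal identity carries an unbounded prefactor $\tf(h)^{\ga-1}$; the factor $\epsilon^{\ga-1}$ in the target exponent is precisely what absorbs this divergence via $n\tf(h)\geq\epsilon$, but making this rigorous requires that every auxiliary constant be tracked in units of $\tf(h)n$ (equivalently, as $\epsilon^{\ga\wedge 1}$ per block of size $n_0$) rather than as a global prefactor before exponentiation. Concretely, the renewal function $\tilde u$ of the $\tf(h)$-tilted process must be handled carefully: the convergence $\tilde u(m)\to 1/\E[\tilde\tau_1]\asymp\tf(h)^{1-\ga}$ for $m$ large exactly compensates the $\tf(h)^{\ga-1}$ divergence from the $\bar K$-sum, and one must verify that the pre-asymptotic regime $m\lesssim n_0$ is already controlled by the first bullet so that the reconstruction of $Z_{n,h}^{\rm pur}$ from blocks of length $n_0$ never costs more than one factor of $(1+c\epsilon^{\ga\wedge 1})$ per block.
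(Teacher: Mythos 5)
Your first bullet is essentially the paper's argument: the paper also starts from the summation-by-parts identity and the bound $\bP(|\tau\cap[0,n]|\geq k)\leq(1-\bar\K(n))^k$, and simply sums the geometric series $\sum_k h\big(e^h(1-\bar\K(n))\big)^k$ directly (no split at $k_*$ is needed, since $e^h(1-\bar\K(n))\leq 1-\tfrac12 cst.\, n^{-\ga}$ once $\epsilon$ is small, because $n^{-\ga}\geq \epsilon^{-\ga}h$ on the range $n\leq\epsilon\tf(h)^{-1}$). Your reorganization is correct but slightly heavier than necessary.

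The second bullet is where there is a genuine gap. The direct renewal-identity bound you write, $Z_{n,h}^{\rm pur}\leq e^{n\tf(h)}\,(1-e^{-h})/(1-e^{-\tf(h)})$, carries the prefactor $h/\tf(h)\asymp\tf(h)^{\ga-1}$ when $\ga<1$; taking logarithms, you would need $(1-\ga)|\log\tf(h)|\leq (c'\epsilon^{\ga-1}-1)\,n\tf(h)$, and for $n$ of order $n_0=\epsilon\tf(h)^{-1}$ the right-hand side is a constant while the left-hand side diverges as $h\to0$. So the bound genuinely fails in the regime $n\asymp n_0$, which is precisely where the lemma is used. You correctly identify this obstacle, but the remedy you propose — a single split of the $m$-sum in $\sum_m Z^{\rm pin}_{m,h}\bar\K(n-m)$ at $m=n_0$ — does not produce the claimed bound $(1+c\epsilon^{\ga\wedge1})^{n/n_0}$: a one-shot split of the last-renewal decomposition has no per-block multiplicative structure, and superadditivity of $\log Z^{\rm pin}$ goes the wrong way for iterating \emph{upper} bounds over blocks. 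The missing ingredient is a \emph{sub}multiplicative inequality for the free partition function: decomposing over the first return after $n_1$ gives $e^hZ^{\rm pur}_{n_1+n_2,h}\leq e^hZ^{\rm pur}_{n_1,h}\cdot e^hZ^{\rm pur}_{n_2,h}$, whence $e^hZ^{\rm pur}_{pn_0,h}\leq\big(e^hZ^{\rm pur}_{n_0,h}\big)^p\leq\exp\big(p(h+c\epsilon^{\ga\wedge1})\big)$, and with $h\leq c''\epsilon^{\ga\wedge1}$ and $p=n/n_0=n\tf(h)/\epsilon$ this is exactly $\exp(c'\epsilon^{\ga\wedge1-1}\tf(h)n)$; monotonicity of $Z^{\rm pur}_{n,h}$ in $n$ covers the remaining values of $n$. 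This is the paper's route, and your first bullet plugs into it verbatim, so the repair is short — but as written your second bullet does not close.
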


\begin{proof}
Because the case $\ga>1$ is easier (essentially since $\tf(h)$ is proportional to $h$),
we restrict to the case $\ga<1$.

Using that $\bP(|\tau\cap[0,n]|\geq k) \leq (1-\bar \K(n))^k$,
one writes
\begin{equation}
\label{basic1}
Z_{n,h}^{\rm pur} = 1+ \sum_{k=1}^{n} (e^{kh} - e^{(k-1)h}) \bP(|\tau\cap[0,n]|\geq k)
  \leq 1+ \sum_{k=1}^{n_0} h \left( e^h (1- \bar\K(n))\right)^k.
\end{equation}
Then, there is some constant $cst.>0$ such that $\bar \K(n) \geq cst. n^{-\ga} $. Since $h\in [0,1]$, we have that $e^h (1- \bar \K(n)) \leq 1 +2h- cst. n^{-\alpha}$. Since $\tf(h)$ is of order $h^{1/\ga}$, we get that, uniformly for $n\leq \epsilon \tf(h)^{-1}$, one has $n^{-\ga} \geq \epsilon^{-\ga} h$. Therefore, if $\epsilon$ is small enough, $e^h (1- \bar \K(n)) \leq 1 - \frac12 cst. n^{-\alpha}<1$. 
From \eqref{basic1} one then gets
\begin{equation}
Z_{n_0,h}^{\rm pur} \leq 1+ h \sum_{k=1}^{n} \left( 1- \frac12 cst. n^{-\ga} \right)^k
     \leq 1+ \frac{h}{\frac12 cst. n^{-\ga}},
\end{equation}
which gives $Z_{n,h}^{\rm pur}\leq 1+c (n\tf(h))^{\ga \wedge 1}$, since $\tf(h)$ is of order $h^{1/\ga}$. 

\smallskip
For the second point, one uses
that for any two integers $n_1,n_2$, decomposing over the first return time after $n_1$, one gets
that $e^hZ_{n_1+n_2,h}^{\rm pur} \leq e^hZ_{ n_1,h}^{\rm pur} e^hZ_{ n_2,h}^{\rm pur}$.
From this, one gets that for any $p\in\bbN$
\begin{equation}
e^h Z_{p n_0,h}^{\rm pur}  \leq \left( e^h Z_{n_0,h}^{\rm pur} \right)^p
   \leq e^{ p \, \frac{c'}{2} \epsilon ^{\ga }  } = e^{  \frac{c'}{2} \epsilon ^{\ga-1} p\,  n_0 \tf(h)  }  
\end{equation}
where we used the previous bound on $Z_{n_0,h}^{\rm pur} $, the fact that
$h\leq c'' \epsilon ^{\ga}$, and the definition of $n_0= \epsilon  \tf(h)^{-1}$.
The general bound for $n\geq n_0$ relies on the monotonicity of $Z_{n,h}^{\rm pur}$
in $n$.
\end{proof}

\end{appendix}

\bibliography{bibliothese}

\end{document}